\documentclass{article}
\usepackage{graphicx} 
\usepackage{amsmath}
\usepackage[shortlabels]{enumitem}
\usepackage[export]{adjustbox}
\usepackage{booktabs}
\usepackage[maxbibnames=99, style=alphabetic, maxalphanames=99, minalphanames=6, 
sorting=nyt, giveninits=true]{biblatex}
\AtEveryBibitem{
\clearfield{issn}
  \clearfield{isbn}
  \clearlist{location}
  \clearfield{month}
  \clearfield{day}
  \clearlist{language}
  \clearlist{translator}
  \renewbibmacro{in:}{}
}
\usepackage{subcaption}
\usepackage[shortlabels]{enumitem}
\usepackage{pdflscape}

\usepackage{fullpage}
\usepackage{mathtools}

\usepackage{amsmath}
\usepackage{amsthm}
\usepackage{thmtools}
\usepackage{hyperref}

\usepackage{tikz}
\usetikzlibrary{calc, backgrounds, shapes.geometric}
\usepackage{amssymb, amsfonts}

\usepackage{tikz-cd}
\usepackage{enumitem}

\usepackage{mathdots}

\usepackage{url}

\usepackage{wrapfig}

\usepackage[T1]{fontenc}

\usepackage{floatflt}
\usepackage{etoolbox}

\theoremstyle{plain}
\newtheorem{theorem}{Theorem}[section]
\newtheorem{lemma}[theorem]{Lemma}
\newtheorem{proposition}[theorem]{Proposition}
\newtheorem{corollary}[theorem]{Corollary}

\theoremstyle{definition}

\newtheorem{ex}[theorem]{Example}
\theoremstyle{remark}
\newtheorem{remark}[theorem]{Remark}

\usepackage{cleveref}

\newcounter{proofstep}
\newcommand{\step}{
  \refstepcounter{proofstep}
  \medskip
  \textit{Step \arabic{proofstep}.}
}
\AtBeginEnvironment{proof}{\setcounter{proofstep}{0}}

\newcounter{paragraphcount}[section]
\newcommand{\titledparagraph}[1]
{\bigskip\noindent%
\refstepcounter{paragraphcount}%
\textbf{\thesection.\theparagraphcount.\ #1}}

\newcommand{\titledparagraphnonr}[1]
{\bigskip\noindent%
\textbf{#1}}

\newcommand{\C}{\mathbb{C}}
\newcommand{\R}{\mathbb{R}}
\newcommand{\Q}{\mathbb{Q}}
\newcommand{\Z}{\mathbb{Z}}
\newcommand{\p}{\mathbb{P}}
\newcommand{\id}{\mathrm{id}}
\newcommand{\trans}{\mathrm{Trans}}
\newcommand{\triv}{\mathrm{Triv}}
\newcommand{\diff}{\mathrm{Diff}}

\newcommand{\MW}{\mathrm{MW}}
\newcommand{\NS}{\mathrm{NS}}
\newcommand{\rk}{\mathrm{rank}}
\newcommand{\MWH}{\mathrm{MW}^\mathrm{hol}}
\newcommand{\PD}{\mathrm{PD}}
\newcommand{\disc}{\mathrm{disc}}
\newcommand{\nr}{\mathrm{nr}}

\title{The Smooth Narrow Mordell--Weil Group of Elliptically Fibered 4-Manifolds}
\author{Maria Morariu}

\begin{document}

\maketitle

\begin{abstract}
For an elliptically fibered $4$-manifold $\pi\colon M\to B$, we introduce a subgroup of the smooth mapping class group of $M$ with explicit geometric representatives: the smooth narrow Mordell--Weil group $\MW_0(\pi)$. 
Leveraging Kodaira's classification of singular fibers, we construct a natural isomorphism between $\MW_0(\pi)$ and the orthogonal complement of the trivial lattice in $H^2(M;\Z)$.
This identification parallels its holomorphic counterpart. This paper generalizes work of Farb and Looijenga \cite{FarbLoojenga_MordellWeil}, who defined the smooth Mordell--Weil group and computed it for fibrations over a sphere with only nodal fibers. We give an explicit formula for the rank of $\MW_0(\pi)$ depending only on the genus of the base curve and the types of singular fibers of $\pi$. As a corollary, we show that rational elliptic surfaces are the only elliptic surfaces with holomorphic and smooth Mordell--Weil groups of the same rank.
\end{abstract}

\section{Introduction}

\titledparagraph{Elliptic fibrations.} 
An \textit{elliptic fibration} of a smooth, projective complex surface $M$ is a proper, surjective, holomorphic\footnote{The assumptions can be weakened as follows: Let $M$ be a closed smooth $4$-manifold and $\pi\colon M\to B$ a proper submersion with connected fibers such that the general fiber is of genus one. Further assume that around each critical value, there exist complex charts on $M$ and $B$ such that $\pi$ is holomorphic in these charts. This ensures that Kodaira's classification of singular fibers of an elliptic fibration still holds (see \Cref{section: Kodaira classification}) and together with the assumptions \ref{item: assumption 1}-\ref{item: assumption 3} is sufficient for \Cref{thm: MW0 as perp of triv}.} 
map $\pi\colon M\to B$ with connected fibers, where $B$ is a smooth, connected projective complex curve, such that the general fiber is a smooth curve of genus $1$. 
Throughout the paper, we make the following assumptions:
\begin{enumerate}[(1)] \setlength{\itemsep}{0pt}
    \item \label{item: assumption 1} There exists a distinguished smooth section $\sigma_0\colon B\to M$, called the \textit{zero-section}.
    \item \label{item: assumption 2} $\pi$ is \textit{nontrivial}:  there exists at least one singular fiber (this ensures $\chi(M)>0$).
    \item \label{item: assumption 3} $\pi$ is \textit{relatively minimal}: no fiber contains an irreducible component of self-intersection $-1$.
\end{enumerate}

The choice of zero-section endows the general fiber with a distinguished point and therefore with an abelian group structure. This additive structure extends to the smooth locus of the singular fibers, see \Cref{section: group structure fibers}.
Given a smooth section $\sigma\colon B\to M$, there exists an associated diffeomorphism $f_\sigma\colon M\to M$ that acts on each fiber by translation by $\sigma$ in the group law of that fiber, see \Cref{section: def and first props}.

\titledparagraph{The smooth (narrow) Mordell--Weil group.}
Farb and Looijenga \cite{FarbLoojenga_MordellWeil} defined the \textit{smooth Mordell--Weil group}:
\[
    \MW(\pi)\coloneqq \pi_0(\{f\in \diff(M)\mid f \text{ acts on each smooth fiber by translation}\}).
\]

This generalizes the classical notion of Mordell--Weil group, $\MWH(\pi)$, obtained by replacing $\diff(M)$ with the group of biholomorphic automorphisms of $M$ in the definition. In this paper, we refer to the smooth Mordell--Weil group as ``the Mordell--Weil group''.

Farb and Looijenga proved that $\MW(\pi)$ is finitely generated. Assuming that $M$ is simply-connected and  all singular fibers are integral (nodes or cusps), they computed $\MW(\pi)$. If arbitrary singular fibers of Kodaira type are allowed, the story becomes richer. This case is the topic of the present paper.

The main object of this paper is \textit{the narrow Mordell--Weil group}:

\[
\MW_0(\pi) = \pi_0 \left( \left\{ f \in \diff(M) \;\middle|\; 
\begin{array}{@{}l@{}}
  f \text{ acts on each smooth fiber by translation and} \\
  \text{takes each irreducible component of each fiber to itself}
\end{array}
\right\} \right).
\]

$\MW_0(\pi)$ can be rephrased in terms of smooth sections: the set of isotopy classes of smooth sections meeting each fiber in the same irreducible component as $\sigma_0$ is a $\MW_0(\pi)$-torsor (see \Cref{lem: sections give diffeomorphisms}).
As before, this is a smooth analog of the holomorphic narrow Mordell--Weil group. The distinction between $\MW_0(\pi)$ and $\MW(\pi)$ is illustrated in \Cref{fig:sections in narrow MW}.

\begin{figure}
    \centering
    \includegraphics[width=0.35\linewidth]{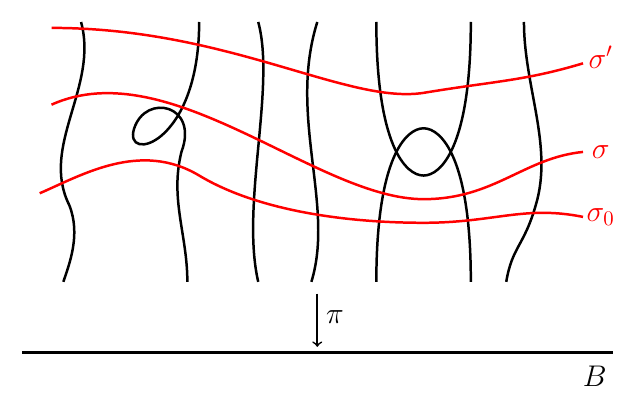}
    \caption{Local picture of an elliptic fibration with three sections. The diffeomorphism that acts by translation by $\sigma$ yields an element of $\MW_0(\pi)$, but the diffeomorphism that acts by translation by $\sigma'$ does not.}
    \label{fig:sections in narrow MW}
\end{figure}

\titledparagraph{Relation between different versions of Mordell--Weil groups.} \label{subsection: relation different versions MW}
This is summarized in the following diagram:
\[
    \begin{tikzcd}
        \MWH_0(\pi) \arrow[d,hook]\arrow[r,hook, "\text{f.i.}"] & \MWH(\pi)\arrow[d,hook]\\
        \MW_0(\pi) \arrow[r,hook,"\text{f.i.}"] & \MW(\pi)\\
    \end{tikzcd}
\]
As in the holomorphic case, $\MW_0(\pi)$ is a finite-index subgroup of $\MW(\pi)$ (see \Cref{prop: MW0 is subgroup of MW} and \Cref{lem: narrow MW is fin index in MW}). The vertical maps are inclusions (\Cref{lem: hol MW is subgroup of smooth}). In general, $\MWH(\pi)$ can contain torsion (see \cite[§6.6]{Schutt_Schioda_MordellWeil}), which implies that $\MW(\pi)$ can contain torsion. $\MW_0(\pi)$ is torsion-free (\Cref{cor: rk MW_0 thm}), so in general $\MW_0(\pi) \lneq \MW(\pi)$. If all fibers are integral (nodes or cusps) then $\MW_0(\pi)=\MW(\pi)$.

\titledparagraph{Main Results.}
Let $\Sigma_g$ denote a projective curve of genus $g\ge 0$.
For an elliptic fibration $\pi\colon M\to \Sigma_g$,
define the \textit{trivial lattice} $\triv(\pi)\subset H^2(M)\coloneqq H^2(M;\Z)$, as the subgroup generated by the Poincaré duals of the fiber class $[F]\in H_2(M)$, the zero-section $[\sigma_0]\in H_2(M)$ and all irreducible components of all singular fibers. The following is our main theorem.
\begin{theorem}[\textbf{Computing $\MW_0(\pi)$}]
\label{thm: MW0 as perp of triv}
    Let $g\ge0$.
    For $\pi\colon M\to \Sigma_g$ an elliptic fibration, there exists a group isomorphism
    \[
        \MW_0(\pi)\cong \triv(\pi)^\perp \subset H^2(M),
    \]
    where $\perp$ is taken in $H^2(M)$ with respect to the intersection form.
\end{theorem}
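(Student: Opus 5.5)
The isomorphism will be built from sections. By \Cref{lem: sections give diffeomorphisms}, the isotopy classes of smooth sections meeting every fiber in the same component as $\sigma_0$ form an $\MW_0(\pi)$-torsor, so $\MW_0(\pi)$ is identified with this set via $\sigma\mapsto f_\sigma$, and composition corresponds to fiberwise addition of sections. I define
\[
\Phi\colon \MW_0(\pi)\to H^2(M),\qquad \Phi(f_\sigma)=\PD[\sigma]-\PD[\sigma_0]+c(\sigma)\,\PD[F],
\]
where the integer $c(\sigma)$ is chosen so that the result pairs to zero with $[\sigma_0]$. Concretely, using $[\sigma]^2=[\sigma_0]^2=-\chi(M)/12$ (a smooth section has self-intersection $-\chi(\mathcal O)$, the same for all sections, by the normal-bundle computation), one gets $c(\sigma)=[\sigma_0]^2-[\sigma]\cdot[\sigma_0]$. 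This image is orthogonal to $F$ because both sections meet $F$ once. It is orthogonal to every non-identity fiber component $\Theta$ because $\sigma$ and $\sigma_0$ meet the same component, namely the identity component, and meet $\Theta$ zero times. It is orthogonal to the identity components as well, since the full fiber is a combination of the components and pairing with $F$ gives zero. Hence $\Phi$ lands in $\triv(\pi)^\perp$.

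Next I show that $\Phi$ is a homomorphism. Writing $f_\sigma^*$ or $(f_\sigma)_*$ for the induced action, translation by $\sigma$ fixes $F$ and every fiber component in homology, since it preserves components by assumption. It sends $\sigma_0$ to $\sigma$ and $\tau$ to $\sigma+\tau$. One checks that the action of $f_\sigma$ on $H_2$ is an Eichler-type transvection, $x\mapsto x+(x\cdot F)\,v-(x\cdot v)F-\tfrac12 (v\cdot v)(x\cdot F)F$ with $v=\PD^{-1}\Phi(f_\sigma)$. This holds on $\triv$ directly, and on $\triv^\perp$ because translation acts trivially on classes orthogonal to $F$ up to multiples of $F$, as a fiberwise translation is isotopic to the identity on a regular neighborhood of a fiber-transverse surface. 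Composing two such transvections with the same isotropic $F$ adds the $v$'s. Therefore $\Phi(f_\sigma f_\tau)=\Phi(f_\sigma)+\Phi(f_\tau)$.

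For injectivity, a section $\sigma$ with $\Phi(f_\sigma)=0$ is homologous to $\sigma_0$, and I must show it is isotopic through sections. Following Farb--Looijenga, sections over $B^\circ=B\setminus\{\text{critical values}\}$ in the identity component up to homotopy are governed by $H^1$ of $B$ with coefficients in the local system $R^1\pi_*\Z$ (the fiberwise $H_1$). The extension over a singular fiber of Kodaira type is controlled by the local monodromy and the component group. The plan is to identify the group of isotopy classes of such sections with $H^1(B, j_*\mathcal H)$ for $\mathcal H=H_1(F_t)$. Then, via the Leray spectral sequence and the relative-minimality assumption, I will show that $H^1(B,j_*\mathcal H)$ embeds into $H^2(M)/\langle\text{fiber-supported classes}\rangle$ and lands exactly in $\triv^\perp$. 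This is where Kodaira's classification is needed: fiber by fiber one checks that the local invariants-versus-coinvariants correction is exactly the span of the fiber components, i.e.\ the root lattice part of $\triv$.

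For surjectivity, given $v\in\triv^\perp$, the Leray filtration places $v$ in the middle graded piece $H^1(B,j_*\mathcal H)$, because $v\cdot F=0$ and $v\cdot\sigma_0=0$ kill the edge pieces. A class there is realized by a section: I construct it over $B^\circ$ by choosing fiberwise loops along a $1$-skeleton, and extend it over the singular fibers through the identity component because the local obstruction vanishes exactly when $v$ is orthogonal to the components. I expect this matching of the Leray middle term with $\triv^\perp$ to be the main obstacle, specifically the case-by-case local analysis for the $I_n$, $I_n^*$, $II$–$IV$, and $II^*$–$IV^*$ fibers. I will reduce it to the known holomorphic Shioda–Tate computation of local contributions, which is purely topological.
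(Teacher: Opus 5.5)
Your map $\Phi$ and your overall architecture (sections, then $H^1(B,R^1\pi_*\Z)$, then Leray) match the paper's. Concretely, $\Phi$ is exactly the paper's composite of $\tau\mapsto\tau([\sigma_0]^\vee)-[\sigma_0]^\vee \bmod \Z[F]^\vee$ with the projection $x\mapsto x-(x\cdot[\sigma_0]^\vee)[F]^\vee$, and your check that it lands in $\triv(\pi)^\perp$ is fine. However, the content of the theorem is that $\Phi$ is bijective, and there your proposal is only a plan whose announced mechanisms are missing or wrong. The missing idea is the identification $\MW_0(\pi)\cong H^1(B,R^1\pi_*\Z)$, which has two parts. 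First, take $\mathcal{T}_0^c(\pi)=(\mathcal{E}_B\otimes R^1\pi_*\Z)/R^1\pi_*\Z$. Because the middle term is fine, the boundary map $H^0(B,\mathcal{T}_0^c)\to H^1(B,R^1\pi_*\Z)$ is surjective with connected (vector-space) kernel. Second, going through the fiber types, the component-preserving translations near a critical value $b$ are $H^1(M_b;\R)/H^1(M_b;\Z)$ times a contractible group of germs of $\R$- or $\C$-valued functions, because $C_0^\#\cong\C^*$ or $\C$. Hence $\mathcal{T}_0/\mathcal{T}_0^c$ is supported on the critical values with contractible stalks, and $\pi_0$ of global translations equals $\pi_0$ of global sections of $\mathcal{T}_0^c$. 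This gives injectivity and surjectivity at once. There is no ``local obstruction'' to extending over singular fibers, and no $1$-skeleton construction is needed.

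The Leray step also does not work the way you describe. Three facts do the work: the differential $H^0(B,R^1\pi_*\Z)\to H^2(B)$ vanishes because $\sigma_0$ makes $\pi^*$ injective; $E_2^{1,1}$ survives; and the edge map $H^2(M)\to H^0(B,R^2\pi_*\Z)$ is restriction to fibers. Since $H_2(M_b)$ is free on the components, $H^1(B,R^1\pi_*\Z)$ is the subquotient $L_1/\Z[F]^\vee$, where $L_1$ is the annihilator of $[F]$ and all $[C_{i,j}]$. It is not a subgroup of $H^2(M)$ modulo fiber-supported classes. The components enter only through $R^2\pi_*\Z$, whose stalk $H^2(M_b)\cong\Z^{r_i+1}$ produces the kernel of the edge map. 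They cannot arise from an invariants-versus-coinvariants correction for $R^1$: those local groups are subquotients of $H^1$ of a smooth fiber, which is $\Z^2$, while the span of the non-identity components has rank $r_i$. Shioda--Tate plays no role. Finally, your homomorphism argument needs $f_\sigma(x)-x\in\Z[F]^\vee$ for $x\in\triv(\pi)^\perp$, and the reason you give (isotopy to the identity near a fiber-transverse surface) does not prove it. The correct reason is that a component-preserving translation acts trivially on every stalk $H^q(M_b)$, hence on the $E_2$ page and on the graded pieces of the Leray filtration. The Eichler-transformation lemma then gives the transvection formula.
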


\begin{remark}
    \Cref{thm: MW0 as perp of triv} underscores the parallel to the holomorphic case. An analogous identity holds for $\MWH_0(\pi)$, but $\perp$ is taken in the Néron-Severi lattice $\NS(M)=H^2(M;\Z)\cap H^{1,1}(M)$, instead of $H^2(M)$.
\end{remark}

\begin{remark}
    \Cref{thm: MW0 as perp of triv} reduces the computation of $\MW_0(\pi)$ to determining the singular fibers of an elliptic fibration and intersection theory in $H^2(M)$.
\end{remark}

The singular fibers of an elliptic surface are of two types: \textit{additive}, if they are simply-connected, or \textit{multiplicative}, otherwise. 
\Cref{fig:diff ell fib with MW_0 of rk 4} illustrates three examples.

\begin{corollary} \label{cor: rk MW_0 thm}
    Let $\pi\colon M\to \Sigma_g$ be an elliptic surface with $a\ge 0$ additive singular fibers and $m\ge 0$ multiplicative ones.
    Then
    \[
        \MW_0(\pi)\cong \Z^{4g+2a+m-4}.
    \]
\end{corollary}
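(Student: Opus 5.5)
The plan is to deduce the corollary from \Cref{thm: MW0 as perp of triv} by computing the rank of $\triv(\pi)^\perp$. We also check that it is torsion-free, so that it is free abelian of that rank.

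\textbf{Torsion-freeness.} $\triv(\pi)^\perp$ is the kernel of the homomorphism $H^2(M)\to \mathrm{Hom}(\triv(\pi),\Z)$, $x\mapsto (t\mapsto x\cdot t)$. An integer-valued pairing kills torsion, so every torsion class of $H^2(M)$ lies in $\triv(\pi)^\perp$. We must therefore show that $H^2(M;\Z)$ is torsion-free for a nontrivial, relatively minimal elliptic surface with a section. Use the fact that such a fibration has no multiple fibers and at least one singular fiber. Then a vanishing-cycle argument (as in Farb--Looijenga) shows that $\pi_1(M)\cong\pi_1(\Sigma_g)$, whose abelianization is $\Z^{2g}$. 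Hence $H_1(M)$ is free, and by the universal coefficient theorem the torsion of $H^2(M)$ is that of $H_1(M)$, which is zero. So $\triv(\pi)^\perp$ is free. Its rank is $b_2(M)-\rk\,\triv(\pi)$, because the intersection form on $H^2(M)/\mathrm{tors}$ is unimodular and hence nondegenerate. \textbf{This torsion-freeness step is the main obstacle}, since it requires the $\pi_1$ computation for arbitrary Kodaira fibers. If that computation is delicate, I would fall back on stating rank only, and argue torsion-freeness of $\MW_0(\pi)$ directly via its action.

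\textbf{Computing $b_2(M)$.} The Euler characteristic is additive over fibers: $\chi(M)=\sum_v e(F_v)$, since smooth fibers have $e=0$. Take $\chi(\mathcal O_M)=\chi(M)/12$, and note $q=g$ because $H_1(M)\cong\Z^{2g}$. Then $b_1=b_3=2g$, and
\[
b_2(M)=\chi(M)-2+4g=\sum_v e(F_v)+4g-2 .
\]

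\textbf{Computing $\rk\,\triv(\pi)$.} Let $m_v$ be the number of irreducible components of $F_v$. The trivial lattice is $U\oplus\bigoplus_v R_v$. Here $U$ is the hyperbolic plane spanned by $[F]$ and $[\sigma_0]$; it has Gram matrix $\begin{pmatrix}0&1\\1&-\chi(\mathcal O_M)\end{pmatrix}$ and is nondegenerate. $R_v$ is the negative-definite root lattice spanned by the components of $F_v$ not meeting $\sigma_0$, of rank $m_v-1$, by Zariski's lemma and the Kodaira classification. Thus $\rk\,\triv(\pi)=2+\sum_v(m_v-1)$.

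\textbf{Assembling.} This gives
\[
\rk\,\MW_0(\pi)=4g-4+\sum_v\bigl(e(F_v)-m_v+1\bigr).
\]
Now we check case by case from Kodaira's table:
\begin{itemize}
\item multiplicative fibers ($I_n$, $n\ge1$): $e=n$ and $m_v=n$, contributing $1$;
\item additive fibers ($II$, $III$, $IV$, $I_n^*$, $II^*$, $III^*$, $IV^*$): $e=m_v+1$, contributing $2$. For example, $II$ has $e=2$, $m=1$, and $I_n^*$ has $e=n+6$, $m=n+5$.
\end{itemize}
Summing, the rank is $4g+2a+m-4$, and combined with freeness this gives $\MW_0(\pi)\cong\Z^{4g+2a+m-4}$.
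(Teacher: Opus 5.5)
Your proposal is correct and is essentially the paper's proof. You compute $\rk \MW_0(\pi)=b_2(M)-\rk\triv(\pi)$ via the main theorem, get $b_2(M)=\chi(M)+4g-2$ from $\chi(M)=\sum_t\chi(M_t)$ and $b_1=b_3=2g$, use the fiberwise count $\chi(M_t)-\nr(M_t)+1\in\{1,2\}$ together with $\rk\triv(\pi)=2+\sum_t(\nr(M_t)-1)$, and conclude freeness because $H^2(M)$ is torsion-free. The step you flag as the main obstacle is handled in the paper by citing Friedman--Morgan for $\pi_1(M)\cong\pi_1(\Sigma_g)$ (\Cref{prop: fund group}), so your fallback is not needed.
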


\begin{remark}
    The rank of $\MW_0(\pi)$ does not depend on how the singular fibers are embedded in $M$. This is in stark contrast to the holomorphic case. Indeed, the rank of $\MW_\text{hol}(\pi)$ depends on the rank of the Néron-Severi group, 
    see the Shioda--Tate formula (\Cref{cor: Shioda Tate Formula}).
\end{remark}

\begin{corollary} \label{cor: rk MW}
    Let $\pi\colon M\to \Sigma_g$ be an elliptic surface with $a\ge0$ additive singular fibers and $m\ge0$ multiplicative ones.
    Then $\MW(\pi)$ is finitely generated and the rank of its free part is $4g+2a+m-4$.
\end{corollary}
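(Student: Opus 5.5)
The plan is to deduce the statement directly from \Cref{cor: rk MW_0 thm}, using the fact that $\MW_0(\pi)$ is a finite-index subgroup of $\MW(\pi)$ (\Cref{prop: MW0 is subgroup of MW} and \Cref{lem: narrow MW is fin index in MW}). Two formal steps are needed: finite generation, and the computation of the rank.

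\emph{Finite generation.} Choose finitely many coset representatives $h_1,\dots,h_k$ of $\MW_0(\pi)$ in $\MW(\pi)$. Take them together with a basis of $\MW_0(\pi)\cong\Z^{4g+2a+m-4}$. Every element of $\MW(\pi)$ can be written as $h_i w$ with $w\in\MW_0(\pi)$, so this finite set generates $\MW(\pi)$. This recovers Farb--Looijenga's finite generation statement in our setting.

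\emph{Rank of the free part.} To speak of ``the free part'' I will first check that $\MW(\pi)$ is abelian. Let $f,g$ be diffeomorphisms acting by translation on each smooth fiber. On each smooth fiber, $f$ and $g$ are translations of the same abelian group, so $f\circ g=g\circ f$ there. The union of smooth fibers is dense in $M$, since the singular fibers are finitely many. By continuity, $f\circ g=g\circ f$ on all of $M$, so their isotopy classes commute. Hence $\MW(\pi)$ is a finitely generated abelian group, say $\MW(\pi)\cong\Z^r\oplus T$ with $T$ finite. Since $\MW_0(\pi)$ has finite index, $\MW(\pi)/\MW_0(\pi)$ is finite, and therefore
\[
r=\dim_\Q \MW(\pi)\otimes\Q=\dim_\Q \MW_0(\pi)\otimes\Q .
\]
Here we use that tensoring with $\Q$ is exact and kills finite groups. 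By \Cref{cor: rk MW_0 thm}, this equals $4g+2a+m-4$.

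The only step needing genuine input is commutativity. It rests on the density of the smooth fibers, together with the fact that every representative in the defining set acts fiberwise by translation. Everything else is the formal passage of rank to a finite-index subgroup. If the paper's \Cref{lem: narrow MW is fin index in MW} only gives finite index without an explicit bound, nothing changes, since finiteness of the index is all that is used.
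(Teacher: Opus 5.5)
Your proposal is correct and matches the paper's proof, which likewise obtains finite generation and the rank from \Cref{cor: rk MW_0 thm} together with the finite index of $\MW_0(\pi)$ in $\MW(\pi)$ (\Cref{lem: narrow MW is fin index in MW}). Your additional check that $\MW(\pi)$ is abelian, because translations commute on the dense union of smooth fibers, is valid and makes explicit a point the paper leaves implicit when it speaks of ``the free part.''
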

\begin{proof}
    By \Cref{lem: narrow MW is fin index in MW}, $\MW_0(\pi)$ is a finite-index subgroup of $\MW(\pi)$, so they have equal rank. Since $\MW_0(\pi)$ is finitely generated, $\MW(\pi)$ is finitely generated as well.
\end{proof}

We can compare the holomorphic and smooth Mordell--Weil groups as follows:
\begin{theorem}[\textbf{Holomorphic vs. smooth}] \label{thm: intro smooth vs hol}
    If $\pi\colon M\to \Sigma_g$ is an elliptic surface of arithmetic genus $d=\chi(M)/12$, then
    \[
        \rk(\MW(\pi))-\rk(\MWH(\pi))\ge 2(g+d-1).
    \]
\end{theorem}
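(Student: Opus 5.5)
We combine \Cref{cor: rk MW} with the Shioda--Tate formula (\Cref{cor: Shioda Tate Formula}) and bound the Picard number by $h^{1,1}$. By \Cref{cor: rk MW}, $\rk(\MW(\pi)) = 4g+2a+m-4$. Shioda--Tate gives
\[
\rk(\MWH(\pi)) = \rho(M) - 2 - \sum_{v} (m_v - 1),
\]
where $m_v$ is the number of irreducible components of the singular fiber $F_v$ and $\rho(M)=\rk \NS(M)$. The difference is then
\[
\rk(\MW(\pi))-\rk(\MWH(\pi)) = 4g+2a+m-2-\rho(M)+\sum_v (m_v-1).
\]

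Next we express everything via $\chi(M)=12d$. The Euler characteristic of a Kodaira fiber is $e(F_v)=m_v$ for multiplicative fibers ($I_n$, $n\ge1$) and $e(F_v)=m_v+1$ for additive ones (e.g.\ $II$: $m_v=1,e=2$; $III$: $2,3$; $I_n^*$: $n+5,n+6$; $II^*$: $9,10$). So $\sum_v (m_v-1) = \chi(M) - 2a - m$, using $\chi(M)=\sum_v e(F_v)$ since the generic fiber has Euler characteristic zero. The difference therefore simplifies to $4g-2+12d-\rho(M)$.

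It remains to bound $\rho(M)\le h^{1,1}(M)$, which follows from the Lefschetz $(1,1)$ theorem. For an elliptic surface with a section, $q=g$ and $p_g = d+g-1$, so $b_1=2g$ and $b_2 = \chi - 2 + 2b_1 = 12d-2+4g$. Hence
\[
h^{1,1} = b_2 - 2p_g = 12d-2+4g-2(d+g-1) = 10d+2g.
\]
Substituting gives a difference of at least $4g-2+12d-10d-2g = 2(g+d-1)$, as claimed.

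The main obstacle is justifying the invariants $q=g$ and $p_g=d+g-1$. These follow from the canonical bundle formula $K_M=\pi^*(K_B\otimes L)$ with $\deg L = d=\chi(\mathcal O_M)$, together with $q=g$ (true because a nontrivial fibration with a section has $R^1\pi_*\mathcal O_M=L^{-1}$ of positive degree $d>0$, by Assumption~\ref{item: assumption 2}). This gives $\chi(\mathcal O_M)=1-q+p_g=d$, and Noether's formula with $K^2=0$ gives $\chi(M)=12d$. The fiber Euler characteristics are read off from Kodaira's table in \Cref{section: Kodaira classification}.
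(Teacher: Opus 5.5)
Your proposal is correct and follows the paper's proof essentially step for step: Shioda--Tate combined with $\rho(M)\le h^{1,1}(M)=10d+2g$, the Euler characteristic count $12d=\sum_v m_v+a$ from Kodaira's table, and \Cref{cor: rk MW}. The only difference is how you get $q=g$: the paper deduces it from $\pi_1(M)\cong\pi_1(\Sigma_g)$ (\Cref{prop: fund group}) together with Hodge symmetry, rather than from the degree of $R^1\pi_*\mathcal{O}_M$, and then reads off $p_g=d+g-1$ from $\chi(\mathcal{O}_M)=d$ just as you do.
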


\begin{remark} \label{rmk: rk MW(K3) ge 2}
    Since $\rk(\MWH(\pi))$ is nonnegative, \Cref{thm: intro smooth vs hol} implies the rank of the smooth Mordell--Weil group is at least $2(g+d-1)$. In particular, if $M$ is a K3 surface ($g=0,d=2$), then $\rk(\MW(\pi))\ge 2$. 
\end{remark}

\begin{remark}
    To the knowledge of the author, the highest known rank of $\MW_\text{hol}(\pi)$ for an elliptic fibration over $\C(t)$ is $68$, attained for a highly symmetric elliptic surface with $360$ cuspidal singularities (see the Remark on page 110 in \cite{Shioda_RmksEllCurves_FunctionFields}).
    In contrast, $\rk(\MW(\pi))\to \infty$ as $d\to \infty$.
\end{remark}

A consequence of \Cref{thm: intro smooth vs hol} is that the smooth Mordell--Weil group has strictly larger rank than the holomorphic one, unless the surface is rational elliptic: 

\begin{corollary}[\textbf{Characterizing rational elliptic surfaces}]
\label{cor: rank MW strictly bigger than MWH unless rational}
Let $\pi\colon M \to B$ be an elliptic surface. Then
    \[
        \rk(\MWH(\pi))\le \rk(\MW(\pi)),
    \]
    with equality if and only if $M$ is a rational elliptic surface. 
\end{corollary}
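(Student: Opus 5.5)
The inequality $\rk(\MWH(\pi))\le\rk(\MW(\pi))$ follows directly from \Cref{thm: intro smooth vs hol}. Since $g\ge 0$ and the arithmetic genus satisfies $d=\chi(M)/12\ge 1$ (assumption \ref{item: assumption 2} gives $\chi(M)>0$, and $12\mid\chi(M)$ by Noether's formula for elliptic surfaces), we have $2(g+d-1)\ge 0$. Alternatively, the inclusion $\MWH(\pi)\hookrightarrow\MW(\pi)$ of \Cref{lem: hol MW is subgroup of smooth} already gives the inequality, because a subgroup of a finitely generated abelian group has rank at most that of the ambient group.

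For the equality statement, I would first use \Cref{thm: intro smooth vs hol}. If the ranks agree, then $2(g+d-1)\le 0$, which with $g\ge 0$ and $d\ge 1$ forces $g=0$ and $d=1$, i.e.\ $B\cong\p^1$ and $\chi(M)=12$. I would then invoke the standard classification: a relatively minimal elliptic surface with a section over $\p^1$ and $\chi=12$ is a rational elliptic surface, e.g.\ by Castelnuovo's criterion, since $q=g=0$ and $p_g=d-1+g=0$, together with $K_M=\pi^*(K_B+L)$ where $\deg L=d=1$, so $K_M=-F$ is anti-effective. I would cite this as part of the standard theory of elliptic surfaces rather than reprove it.

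For the converse, assume $M$ is rational elliptic, so $g=0$ and $d=1$. I would compare both ranks directly via rank formulas. By \Cref{cor: rk MW}, $\rk\MW(\pi)=2a+m-4$. By the Shioda--Tate formula (\Cref{cor: Shioda Tate Formula}), $\rk\MWH(\pi)=\rho(M)-2-\sum_v(m_v-1)$, where $m_v$ is the number of components of the fiber $F_v$. For a rational surface $\rho=b_2=10$, since $p_g=0$. It then remains to show
\[
\sum_v (m_v-1)=8-2a-m+4=12-2a-m .
\]
To see this, use the Euler characteristic count $12=\chi(M)=\sum_v e(F_v)$. For a multiplicative fiber $I_n$ we have $e=n=m_v$, so $e(F_v)-(m_v-1)=1$. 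For an additive fiber, checking Kodaira's list ($II$: $e=2,m_v=1$; $III$: $3,2$; $IV$: $4,3$; $I_n^*$: $n+6,n+5$; $II^*,III^*,IV^*$: $10,9$ / $9,8$ / $8,7$) gives $e(F_v)-(m_v-1)=2$. Summing over all singular fibers, $\sum_v(m_v-1)=12-2a-m$, as needed. The ranks therefore coincide. In fact the same computation gives the general identity $\rk\MW-\rk\MWH=b_2-\rho$, which is the underlying reason \Cref{thm: intro smooth vs hol} holds.

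The main obstacle is the identification ``$g=0$, $d=1$ $\Rightarrow$ rational''. I would rely on the canonical bundle formula and Castelnuovo's criterion, which are external inputs rather than results of this paper. The fiber-type bookkeeping is routine.
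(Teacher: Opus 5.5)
Your proposal is correct and follows essentially the paper's argument. In both, the inequality and the forward direction of the equality case come from \Cref{thm: intro smooth vs hol}, which forces $g=0$ and $d=1$. The converse is the same Shioda--Tate plus Euler-characteristic count giving $\rk(\MWH(\pi))=2a+m-4=\rk(\MW(\pi))$. Your extra remarks are correct and fill in points the paper leaves implicit: the subgroup argument for the inequality, the canonical bundle formula and Castelnuovo justification that $g=0,d=1$ gives a rational surface, and the general identity $\rk(\MW(\pi))-\rk(\MWH(\pi))=b_2-\rho$.
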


\noindent\textbf{Methods.} The key to our computation is a cohomological characterization for $\MW_0(\pi)$ (\Cref{prop: MW0=H1}). We use a spectral sequence argument, leveraging Kodaira's classification of singular fibers.

\begin{figure}
    \centering
    \begin{subfigure}[b]{0.265\textwidth}
        \centering
        \fbox{\includegraphics[width=\textwidth]{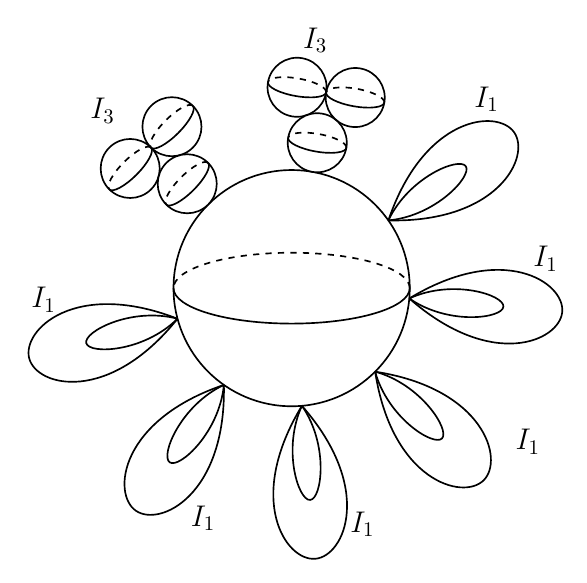}}
        \caption{Rational elliptic surface with singular fibers $6I_1+2I_3$.}
        \label{fig:2I_3}
    \end{subfigure}
    \hfill
    \begin{subfigure}[b]{0.276\textwidth}
        \centering
        \fbox{\includegraphics[width=\textwidth]{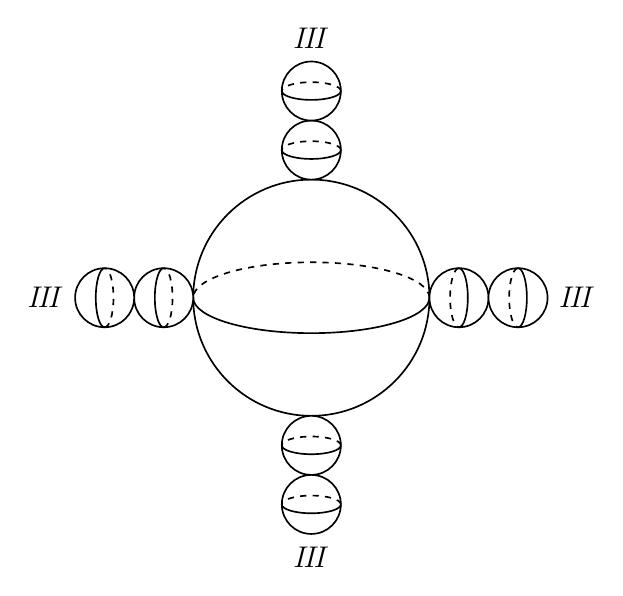}}
        \caption{Rational elliptic surface with singular fibers $4III$.}
        \label{fig:4III}
    \end{subfigure}
    \hfill
    \begin{subfigure}[b]{0.265\textwidth}
        \centering
        \fbox{\includegraphics[width=\textwidth]{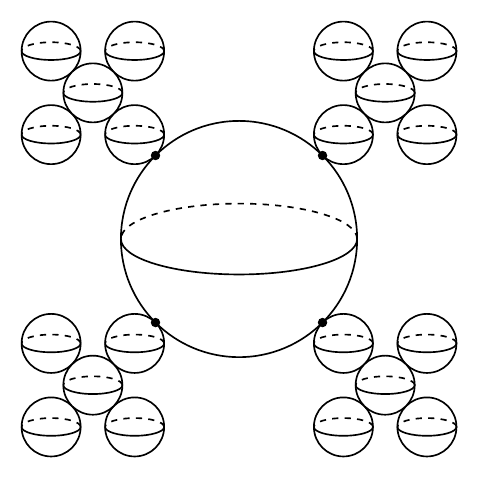}}
        \caption{Kummer K3 surface with singular fibers $4I_0^*$.}
        \label{fig:Kummer}
    \end{subfigure}
    \caption[Each of these elliptic fibrations $\pi\colon M\to \p^1$ has $\MW_0(\pi)\cong \Z^4$.]{Each of these elliptic fibrations\footnotemark $\pi\colon M\to \p^1$ has $\MW_0(\pi)\cong \Z^4$.} 
    \label{fig:diff ell fib with MW_0 of rk 4}
\end{figure}
\footnotetext{See \Cref{section: Kodaira classification} for terminology describing the singular fibers.}

\begin{ex}[\textbf{Rank of $\MW$ can distinguish elliptic fibrations on a fixed diffeomorphism type}]
    As Kodaira showed, all K3 surfaces are diffeomorphic. However, the Mordell--Weil group can differentiate between  elliptically fibered K3 surfaces that are not isomorphic as elliptic fibrations (see \cite[Definition 3.2]{FriedmanMorgan_Smooth4Mflds}). By \Cref{rmk: rk MW(K3) ge 2}, the rank of $\MW(\pi)$ for an elliptic fibration $\pi\colon M\to \p^1$ of a K3 surface $M$ is at least $2$. By \Cref{cor: rk MW}, this is attained if $\pi$ has $6$ multiplicative fibers. See \cite{MirandaPersson_ConfigurationEllipticK3} for the $112$ possible configurations of $6$ multiplicative fibers on an elliptically fibered K3 surface. The other extreme is $\rk(\MW(\pi))=20$, obtained for the generic elliptically fibered K3 surface with $24$ nodal singular fibers.
\end{ex}

\titledparagraph{$\MW_0(\pi)$ as a subgroup of the mapping class group.}
$\MW_0(\pi)$ acts faithfully by Eichler transformations on $H^2(M)$ (see \Cref{prop: narrow MW as annihilator}). This action factors as
\[
    \MW_0(\pi) \to \pi_0(\diff(M)) \to \mathrm{Aut}(H^2(M),\smile).
\]
Injectivity of the composition implies that $\MW_0(\pi)$ injects into the smooth mapping class group $\pi_0(\diff(M))$.
$\MW_0(\pi)$ plays an important role in the classification of mapping classes of elliptic fibrations, representing the unipotent radical in a Thurston-type classification for elliptically fibered four-manifolds.
See \cite[§1.4]{FarbLoojenga_MordellWeil} for a discussion in the case that the singular fibers are integral.

\titledparagraph{Related Work.} Under the assumption that the $j$-invariant is nonconstant, Shioda \cite{Shioda_ModularEllSurf} computed
the rank of $H^1(B,R^1\pi_*\Z)$. He determined the same formula as the rank of the narrow smooth Mordell--Weil group in \Cref{cor: rk MW_0 thm}.
Hacking--Keating \cite[Section 4.1]{HackingKeating_Symplectomorphisms} computed a symplectic version of the Mordell--Weil group. More precisely, they considered an elliptic surface over a disk with an exact symplectic form such that the fibers are Lagrangian and all singular fibers are nodal. They computed the group of isotopy classes of symplectomorphisms that act by translation in the fibers.
Cox and Zucker \cite{CoxZucker_IntersectionNumbers} embedded the holomorphic Mordell--Weil group in $H^1(B,R^1\pi_*\Q)$ and used the Leray spectral sequence to compute the latter.

\titledparagraph{Future Directions.}
A consequence of \Cref{cor: rk MW_0 thm} is that $\MW_0(\pi)$ is torsion-free. 
Thus, the question remains: what is the torsion subgroup of $\MW(\pi)$? Can it be larger than that of the holomorphic Mordell--Weil group, or does a version of Nielsen realization hold for Mordell--Weil elements?
We tackle these questions, as well as a full computation of $\MW(\pi)$, in a subsequent paper.

\titledparagraph{Outline.} In \Cref{section: background}, we synthesize the facts about elliptic fibrations that we use in the rest of the paper. In \Cref{section: def and first props}, we prove some key properties of the narrow Mordell--Weil group. The cohomological characterization given in \Cref{subsection: cohomological characterization} is essential for our computation of $\MW_0(\pi)$. 
In \Cref{subsection: MW0 as annihilator}, we express $\MW_0(\pi)$ as the quotient of an intersection of annihilators.
This is used in \Cref{subsection: proof main thm} to prove \Cref{thm: MW0 as perp of triv}. In \Cref{subsection: rank computation of MW0}, we compute the rank of $\MW_0(\pi)$. In \Cref{subsection: comparing smooth and hol}, we prove \Cref{thm: intro smooth vs hol} and \Cref{cor: rank MW strictly bigger than MWH unless rational}. In \Cref{subsection: examples}, we collect some examples.

\titledparagraphnonr{Computational Resource Disclosure.} Gemini Pro 3.1 was used in the creation of pictures. ChatGPT Pro 5.6 was involved in checking the proofs.

\titledparagraphnonr{Acknowledgements.} I am very grateful to my advisor Benson Farb for suggesting this project, for his constant support, and for extensive comments on earlier drafts. I am thankful to Eduard Looijenga for many useful discussions. I am thankful to Dan Margalit for helpful comments on an earlier draft.

\section{Background on Elliptic Fibrations}
\label{section: background}
In this section, we collect some background, which will be used throughout the paper.

\medskip\noindent \textbf{Notation and Conventions:} For an elliptic fibration $\pi\colon M\to B$ and $b\in B$, we denote $M_b\coloneqq \pi^{-1}(b)$. 
We denote by $R\subset B$ the set of critical values of $\pi$.
Throughout the paper, (co)homology is taken with $\Z$-coefficients unless otherwise specified. The topology on $\diff(M)$ is the Whitney $C^\infty$ topology. From now on, we assume that the zero-section $\sigma_0$ is holomorphic in a neighborhood of each critical value\footnote{This assumption is not restrictive, since any smooth section can be deformed into a section which is holomorphic in a neighborhood of each critical value by the holomorphic submersion theorem.}.

For any elliptic surface over a projective curve $\Sigma_g$, the Euler characteristic is a multiple of 12\footnote{This is a consequence of Noether's Theorem and the fact that the canonical class is a multiple of the fiber class}.
The non-triviality assumptions \ref{item: assumption 1}-\ref{item: assumption 3} about elliptic surfaces ensure $\chi(M)>0$.
A fundamental invariant of an elliptic surface is its arithmetic genus, $d\coloneqq \chi(M)/12$. Together with $g$, this determines how $M$ fits in the Enriques-Kodaira classification of complex surfaces: 
\begin{itemize}
    \item If $d=1,g=0$, $M$ is a \textit{rational elliptic surface} (Kodaira dimension $-\infty$);
    \item if $d=2,g=0$, $M$ is an \textit{elliptically fibered K3 surface} (Kodaira dimension $0$);
    \item if $d\ge 3$ or $g\ge1$, $M$ has Kodaira dimension $1$.
\end{itemize}
The following proposition shows that $\pi_1(M)\cong \pi_1(B)$.
\begin{proposition}[\protect{\cite[p. 157, Proposition 2.1]{FriedmanMorgan_Smooth4Mflds}}] \label{prop: fund group}
    If $\pi\colon M\to \Sigma_g$ is an elliptic fibration, then $\pi$ induces an isomorphism $\pi_*\colon \pi_1(M)\to \pi_1(\Sigma_g)$.\footnote{The cited theorem assumes that $\pi$ is non-trivial and without multiple fibers. These assumptions are automatically satisfied in our context, by assumptions \ref{item: assumption 1}-\ref{item: assumption 3}}
\end{proposition}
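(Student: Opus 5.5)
The plan is to use the zero-section to split the fundamental group, and to show $\pi_*$ is injective by showing that the image of $\pi_1$ of a general fiber in $\pi_1(M)$ is trivial.

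\textbf{Surjectivity.} By assumption \ref{item: assumption 1} there is a section $\sigma_0$ with $\pi\circ\sigma_0=\id_B$. Hence $\pi_*\circ(\sigma_0)_*=\id$ on $\pi_1(\Sigma_g)$, so $\pi_*$ is surjective. Moreover $\pi_1(M)\cong \ker\pi_*\rtimes \pi_1(\Sigma_g)$, and it remains to show $\ker\pi_*=1$.

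\textbf{Locating the kernel.} Let $R\subset B$ be the finite set of critical values, $B^\circ=B\setminus R$ and $M^\circ=\pi^{-1}(B^\circ)$. Then $M^\circ\to B^\circ$ is a torus bundle, and its long exact sequence gives
\[
\pi_1(F)\to\pi_1(M^\circ)\to\pi_1(B^\circ)\to 1 .
\]
The map $\pi_1(M^\circ)\to\pi_1(M)$ is surjective, since each singular fiber has real codimension $2$, so loops and homotopies can be pushed off it. The map $\pi_1(B^\circ)\to\pi_1(B)$ is also surjective. Its kernel is normally generated by small loops $\gamma_r$ around the points $r\in R$, and by a diagram chase their lifts $\sigma_0(\gamma_r)$ bound disks in $M$: take the image under $\sigma_0$ of a small disk around $r$. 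Combining these facts, $\ker(\pi_*\colon\pi_1(M)\to\pi_1(B))$ is generated by the image of $\pi_1(F)$ in $\pi_1(M)$ together with its conjugates. It therefore suffices to show that the image of $\pi_1(F)\cong\Z^2$ in $\pi_1(M)$ is trivial.

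\textbf{Killing the fiber group (main obstacle).} The image of $\pi_1(F)\to\pi_1(M)$ is invariant under the monodromy action of $\pi_1(B^\circ)$. Near a singular fiber $M_r$, a tubular neighborhood $N_r$ deformation retracts onto $M_r$, so the image of $\pi_1(F)$ factors through $\pi_1(M_r)$. Since $\sigma_0$ is holomorphic near $r$, it meets $M_r$ in a reduced component of multiplicity one, so $M_r$ is not a multiple fiber and Kodaira's list applies. The argument then splits by fiber type.

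\begin{itemize}
\item For an additive fiber, $M_r$ is simply connected, so $\pi_1(F)\to\pi_1(N_r)$ is zero and we are done.
\item For a multiplicative fiber $I_n$, $\pi_1(M_r)\cong\Z$ and the vanishing cycle $v$ dies in $\pi_1(N_r)$. The image of $\pi_1(F)$ in $\pi_1(M)$ is then generated by a single class $w$ with $v\cdot w=1$.
\end{itemize}

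Because $\pi$ is nontrivial (assumption \ref{item: assumption 2}), at least one singular fiber exists, and I need a second vanishing cycle independent of $v$. If all monodromies preserved $v$, the global monodromy would be contained in a parabolic subgroup fixing $v$. I would then argue that $\chi(M)>0$ together with the fiber-type constraints forces non-parabolic global monodromy. This is exactly the step where relative minimality and $\chi(M)>0$ (equivalently, non-isotriviality or a nontrivial $j$-map, or monodromy with no common fixed vector) enter, and it is the part I expect to be delicate.

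\textbf{Finishing.} Granting a second vanishing cycle $v'$ independent of $v$, both $v$ and $v'$ die in $\pi_1(M)$. The image of $\Z^2$ is then a quotient of the finite group $\Z^2/\langle v,v'\rangle$. A finite quotient is not yet trivial, so I would strengthen the claim to show that the vanishing cycles span all of $\Z^2$: monodromy conjugates of $v$ are again vanishing cycles, and a monodromy-invariant sublattice containing $v$ that is not $\Z v$ should be all of $\Z^2$ under the same non-parabolicity argument. This gives $\ker\pi_*=1$, so $\pi_*$ is an isomorphism.
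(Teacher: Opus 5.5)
Your reductions are sound. The section gives surjectivity and a splitting, $\ker\pi_*$ is the normal image $N$ of $\pi_1(F)$, and a single additive fiber kills $N$.

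\textbf{The gap: the last step in the all-multiplicative case.} There, van Kampen shows that $N\cong \Z^2/V$ \emph{exactly}, where $V\subset H_1(F)$ is the span of all transported vanishing cycles. (The section disks kill the meridians, and the meridians act trivially modulo vanishing cycles.) Nothing else kills the fiber group, so you genuinely have to prove $V=\Z^2$.

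Your justification for this is false. You claim that a monodromy-invariant sublattice containing $v$ and different from $\Z v$ must be $\Z^2$ once the monodromy is non-parabolic. Take $V=\Z e_1+2\Z^2$. A Dehn twist about any vector of $V$ preserves $V$. So $T_{e_1}$ and $T_{e_1+2e_2}$ generate a group that fixes no line and preserves $V$, and every vanishing cycle it produces lies in $V$. Non-parabolicity therefore only gives that $[\Z^2:V]$ is finite, i.e.\ that $N$ is finite cyclic. That is exactly where your argument stalls.

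\textbf{What is missing.} You need the global monodromy relation $\prod_j[A_j,B_j]\prod_r T_{v_r}^{n_r}=1$ together with positivity $n_r\ge 1$.
\begin{enumerate}[(1)]
\item This already settles your rank-two step, which is not delicate and needs only the existence of a singular fiber, not $\chi(M)>0$. If every vanishing cycle were $\pm v$, the whole monodromy would lie in the abelian stabilizer of $\pm v$. The commutators would then vanish, and the relation would give $T_v^{\sum n_r}=1$, which is absurd.
\item The same mechanism controls the index. Suppose $[\Z^2:V]=m>1$. Since $V$ contains a primitive vector, $V=\Z e_1+m\Z^2$ in a suitable basis. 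The monodromy lies in the stabilizer of $V$, which is $\Gamma_0(m)$. Every twist about a primitive vector of $V$ is $\Gamma_0(m)$-conjugate to $T_{e_1}$. The relation therefore gives $(\sum_r n_r)[T_{e_1}]=0$ in the abelianization of $\Gamma_0(m)/\{\pm1\}$.
\item This is a contradiction. $\Gamma_0(m)$ has at least two inequivalent cusps ($\infty$ and $0$). By the standard presentation of a Fuchsian group with at least two cusps, the image of $T_{e_1}$ generates a free $\Z$ factor, so its class has infinite order.
\end{enumerate}

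\textbf{A holomorphic alternative.} The index-$m$ subgroup $1\rtimes\pi_1(B)$ of $\pi_1(M)\cong(\Z^2/V)\rtimes\pi_1(B)$ gives an unramified cover $p\colon\tilde M\to M$. This cover is again an elliptic surface with section over $B$, with $\chi(\mathcal{O}_{\tilde M})=md$ and $p^*[F]^\vee=[\tilde F]^\vee$. Comparing $K_{\tilde M}=p^*K_M$ with the canonical bundle formula on both sides forces $md=d$, so $m=1$.

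Some argument of this kind has to replace your final step. As written, the proposal does not establish $\ker\pi_*=1$.
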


\begin{corollary}
    If $\pi\colon M\to \Sigma_g$ is an elliptic fibration, then $H^2(M)$ is torsion-free. Together with the intersection form, it is a unimodular lattice.
\end{corollary}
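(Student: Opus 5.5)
The plan is to combine the universal coefficient theorem with \Cref{prop: fund group} for torsion-freeness, and then Poincaré duality for unimodularity.

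First, torsion-freeness. By the universal coefficient theorem,
\[
H^2(M)\cong \mathrm{Hom}(H_2(M),\Z)\oplus \mathrm{Ext}(H_1(M),\Z).
\]
The first summand is always torsion-free. The torsion of $H^2(M)$ is therefore isomorphic to $\mathrm{Ext}(H_1(M),\Z)$, which in turn is isomorphic to the torsion subgroup of $H_1(M)$. Now \Cref{prop: fund group} gives $\pi_1(M)\cong\pi_1(\Sigma_g)$. Abelianizing, we get $H_1(M)\cong H_1(\Sigma_g)\cong\Z^{2g}$, which is free. Hence the Ext term vanishes and $H^2(M)$ is torsion-free.

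Second, unimodularity. The manifold $M$ is a closed oriented $4$-manifold, oriented by its complex structure. The cup product pairing
\[
H^2(M)\times H^2(M)\to H^4(M)\cong\Z
\]
induces the map $H^2(M)\to \mathrm{Hom}(H^2(M),\Z)$. By Poincaré duality and the universal coefficient theorem, this map is an isomorphism modulo torsion. More concretely, it is the composite
\[
H^2(M)\xrightarrow{\PD} H_2(M)\to \mathrm{Hom}(H^2(M),\Z).
\]
The second arrow is evaluation, and its kernel and cokernel are controlled by torsion in $H_1(M)$ and $H_2(M)$.

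By the first step $H_1(M)$ is free. Also $H_2(M)\cong H^2(M)$ by Poincaré duality, and this is free by the first step. So all the relevant Ext terms vanish, and the pairing is perfect over $\Z$. Thus $(H^2(M),\smile)$ is a unimodular lattice. Since $M$ is compact, it is of finite rank.

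There is no real obstacle here. The only substantive input is that $H_1(M)$ is free, which is exactly what \Cref{prop: fund group} supplies; everything else is standard algebraic topology.
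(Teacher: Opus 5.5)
Your proposal is correct and is the paper's argument: abelianize $\pi_1(M)\cong\pi_1(\Sigma_g)$ to get $H_1(M)\cong\Z^{2g}$ free, use the universal coefficient theorem to see that the $\mathrm{Ext}$ term (hence all torsion in $H^2(M)$) vanishes, and conclude unimodularity from Poincaré duality. The only difference is that you spell out the duality step, writing the adjoint of the cup product pairing as evaluation composed with $\PD$; the paper leaves this implicit.
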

\begin{proof}
    Applying abelianization and the Hurewicz isomorphism to both sides of $\pi_1(M)\cong \pi_1(\Sigma_g)$ from \Cref{prop: fund group} implies $H_1(M)\cong \Z^{2g}$. By the universal coefficient theorem, $H^2(M)$ is torsion-free. Poincaré duality implies unimodularity of the lattice $H^2(M)$.
\end{proof}

\begin{lemma}\label{lem: euler char is sum of euler of fibers}
    For an elliptic surface $\pi\colon M \to B$ with set of critical values $R\subset B$,
    \[
        \chi(M)= \sum_{t\in R} \chi(M_{t}).
    \]
\end{lemma}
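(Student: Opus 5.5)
We decompose $M$ into the preimage of a neighborhood of the critical values and its complement, and then use additivity of the Euler characteristic together with the multiplicativity of $\chi$ for fiber bundles. Choose pairwise disjoint closed disks $D_t\subset B$ around each $t\in R$, and put $D=\bigsqcup_{t\in R} D_t$ and $B^\circ = B\setminus \operatorname{int}(D)$. Then $M=\pi^{-1}(D)\cup \pi^{-1}(B^\circ)$, and the two pieces intersect in $\pi^{-1}(\partial D)$. Since all pieces are compact manifolds with boundary (or CW-pairs), Mayer--Vietoris gives
\[
\chi(M)=\chi(\pi^{-1}(D))+\chi(\pi^{-1}(B^\circ))-\chi(\pi^{-1}(\partial D)).
\]

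Next we treat the regular pieces. Over $B^\circ$ and over $\partial D$ the map $\pi$ is a proper submersion, so by Ehresmann's theorem it is a locally trivial fiber bundle with fiber a torus $T^2$. For a fiber bundle over a finite CW-complex, $\chi(\text{total space})=\chi(\text{base})\cdot\chi(\text{fiber})$; this follows, for instance, from the Serre spectral sequence with local coefficients, or by an induction over cells using Mayer--Vietoris. Since $\chi(T^2)=0$, both $\chi(\pi^{-1}(B^\circ))$ and $\chi(\pi^{-1}(\partial D))$ vanish.

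It remains to show that $\chi(\pi^{-1}(D_t))=\chi(M_t)$ for each $t\in R$, and this is the main point. Our plan is to prove that for a small enough disk $D_t$ the singular fiber $M_t$ is a deformation retract of $\pi^{-1}(D_t)$. The inclusion $M_t\hookrightarrow\pi^{-1}(D_t)$ is then a homotopy equivalence, so the Euler characteristics agree. To get the retraction, we use that near $M_t$ the map $\pi$ is holomorphic in suitable charts. Hence $M_t$ is a compact analytic subvariety, so it admits a triangulation extending to a triangulation of a neighborhood (Łojasiewicz). Consequently $M_t$ has a regular neighborhood that deformation retracts onto it, and a standard argument (as in Milnor's fibration theorem, or via the curve-selection/Łojasiewicz retraction for the proper analytic map $\pi$) shows that the tubes $\pi^{-1}(D_t)$ for small $D_t$ are cofinal among such neighborhoods and retract onto $M_t$.

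Alternatively, one can avoid the general theory by using Kodaira's classification, cited in the paper as \Cref{section: Kodaira classification}, together with the explicit local models: each $\pi^{-1}(D_t)$ is obtained from a model whose homotopy type is visibly that of the fiber, namely a plumbing of the components for additive fibers and a torus with vanishing cycles attached for $I_n$. We would also remark that $\chi(\pi^{-1}(D))=\sum_t\chi(\pi^{-1}(D_t))$ by disjointness. Summing over $t\in R$ then yields $\chi(M)=\sum_{t\in R}\chi(M_t)$.
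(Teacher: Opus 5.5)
Your argument is correct, and it is in substance the standard proof of the formula that the paper simply cites. The paper invokes Beauville's Lemma VI.4, $\chi(M)=\chi(B)\chi(T^2)+\sum_{b\in R}(\chi(M_b)-\chi(T^2))$, which holds for an arbitrary fibration of a surface over a curve with connected fibres, and then sets $\chi(T^2)=0$. You instead rebuild that formula in the case at hand, from three ingredients: Mayer--Vietoris along $\pi^{-1}(\partial D)$, the vanishing of $\chi$ for the torus bundles over $B^\circ$ and $\partial D$, and the identity $\chi(\pi^{-1}(D_t))=\chi(M_t)$.

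The citation buys brevity and generality (any fibre genus). Your version is self-contained and isolates the one non-formal input, the local homotopy equivalence $M_t\simeq\pi^{-1}(D_t)$. That step needs one more sentence than you give it, because cofinality of the tubes among neighbourhoods of $M_t$ does not by itself make a given tube retract onto $M_t$. What closes the gap is that for concentric disks $D'_t\subset D_t$ the inclusion $\pi^{-1}(D'_t)\hookrightarrow\pi^{-1}(D_t)$ is a homotopy equivalence, by Ehresmann over the closed annulus $D_t\setminus\operatorname{int}D'_t$. You can then interleave tubes with shrinking regular neighbourhoods, $N_1\supset\pi^{-1}(D_1)\supset N_2\supset\pi^{-1}(D_2)$. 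This gives $H_*(\pi^{-1}(D_1))\cong H_*(N_2)\cong H_*(M_t)$, which is all that is needed for Euler characteristics.

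Your aside on explicit local models is loose and can simply be dropped. Fibres of types $II$, $III$, $IV$ are not normal crossing divisors, so their neighbourhoods are not plumbings of the components, although they are still homotopy equivalent to the fibre.
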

\begin{proof} By \cite[Lemma VI.4]{Beauville_ComplexAlgSurfaces},
    \[
        \chi(M)=\chi(B)\chi(T^2) + \sum\limits_{b\in R} (\chi(M_b)-\chi(T^2)).
    \]
    The Euler characteristic of a torus is $0$, yielding the conclusion.
\end{proof}

\subsection{Kodaira's Classification of Singular Fibers} \label{section: Kodaira classification}

Let $\pi\colon M\to B$ be an elliptic surface and let $M_b$ be a singular fiber. We can write it as a divisor $M_b=\sum m_i C_i$, where $C_i$ are the distinct irreducible components of $M_b$ and $m_i$ is the multiplicity of the component $C_i$. 
Since $\pi\circ\sigma_0=\id_B$, the chain rule implies $d\pi_{\sigma_0(b)}\circ d\sigma_0(b) = \id_{T_b B}$, so $d\pi_{\sigma_0(b)}$ has full rank. Hence, $\sigma_0(b)$ lies in a multiplicity-one component of $M_b$.
In particular, there exists $i$ such that $m_i=1$.\footnote{The assumption that a section exists is essential, otherwise $\pi$ could have a multiple fiber.}
The \textit{dual graph} to a reducible fiber $M_b$ is the graph with vertices corresponding to $C_i$ and exactly $C_i\cdot C_j$ edges connecting the vertices corresponding to $C_i$ and $C_j$, if $i\ne j$.

The singular fibers of an elliptic surface were first classified by Kodaira \cite{Kodaira_CompAnalyticSurf2} and subsequently by Néron \cite{Neron_ModelMin} and Tate \cite{Tate_AlgorithmSingFibers}. The formulation of the classification we give below can be found in \cite[Theorem 5.12, Proposition 5.15]{Schutt_Schioda_MordellWeil}.

\begin{theorem}[Kodaira Classification]\label{thm: Kodaira classification sing fibers}
    The dual graph of a reducible fiber of an elliptic surface is an extended Dynkin diagram of type ADE (see \Cref{fig: extended Dynkin ADE}).
    Furthermore, all fibers of an elliptic surface have one of the following types:
    \begin{center}
    \begin{tabular}{c|c|c}
       \textbf{Kodaira Symbol}  & \textbf{Description} & \textbf{Dual Graph} \\ \hline
       $I_0$  & smooth elliptic curve & \\
       $I_1$ & nodal rational curve & \\
       $I_N, N\ge 2$ & $N$ smooth rational curves meeting transversally in a cycle & $\tilde{A}_{N-1}$\\
       $I_N^*, N\ge 0$ & $N+5$ smooth rational curves & $\tilde{D}_{N+4}$ \\
       $II$ & a cuspidal rational curve &\\
       $III$ & two smooth rational curves meeting at one point with multiplicity $2$ & $\tilde{A}_{1}$\\
       $IV$ & three smooth rational curves meeting at one point & $\tilde{A}_{2}$\\
       $IV^*$ & $7$ smooth rational curves& $\tilde{E}_6$ \\
       $III^*$ & $8$ smooth rational curves & $\tilde{E}_7$ \\
       $II^*$ & $9$ smooth rational curves &  $\tilde{E}_8$
    \end{tabular}
    \end{center}
\end{theorem}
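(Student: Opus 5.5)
This statement is the Kodaira classification, which the paper cites from the literature rather than proving. The plan below reconstructs the standard argument as in Kodaira and in Schütt--Shioda: adjunction plus negative-semidefiniteness of the intersection form on fiber components.

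\emph{Step 1: set up the intersection-theoretic constraints.} Write $M_b=\sum m_iC_i$.
\begin{itemize}
\item Since $M_b\cdot C_j=0$ for every component, Zariski's lemma says the intersection form on $\bigoplus \Z C_i$ is negative semidefinite, with kernel spanned by $M_b$ (fibers are connected).
\item The canonical class is numerically a multiple of the fiber class, so $K_M\cdot C_i=0$.
\item Adjunction then gives $C_i^2=2p_a(C_i)-2$.
\end{itemize}

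\emph{Step 2: the irreducible case.} If $M_b$ is irreducible, negative semidefiniteness forces $C^2=0$ and hence $p_a(C)=1$. Multiplicity is $1$ by the existence of $\sigma_0$, as noted above. An integral curve of arithmetic genus one is smooth elliptic ($I_0$), nodal rational ($I_1$), or cuspidal rational ($II$).

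\emph{Step 3: the reducible case.} If $M_b$ is reducible, each $C_i^2<0$ by Zariski's lemma, so $C_i^2=-2$ by relative minimality (assumption \ref{item: assumption 3}) and adjunction. Hence each $C_i$ is a smooth rational curve with $C_i^2=-2$.

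\emph{Step 4: identify the dual graph.} The lattice spanned by $-2$-curves with negative semidefinite form and one-dimensional kernel is the root lattice of a connected graph whose Cartan-type matrix is positive semidefinite of corank one. Negativity also bounds $C_i\cdot C_j\le 2$; when it equals $2$ the two curves form the whole fiber. The classification of such matrices (Vinberg / affine Cartan) gives the extended Dynkin diagrams $\tilde A_n,\tilde D_n,\tilde E_{6,7,8}$. The multiplicities $m_i$ are the unique primitive positive kernel vector, and they contain a $1$ because of $\sigma_0$.

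\emph{Step 5: the $\tilde A$ types need a finer split.} Tie each diagram to a Kodaira symbol:
\begin{itemize}
\item $\tilde A_1$: two curves with $C_1\cdot C_2=2$. Either they meet transversally in two points ($I_2$) or tangentially in one point ($III$).
\item $\tilde A_2$: three curves, each pair meeting once. Either they form a triangle ($I_3$) or all pass through one point ($IV$).
\item $\tilde A_{N-1}$, $N\ge 4$: necessarily a cycle ($I_N$).
\item $\tilde D_{N+4}$, $\tilde E_6$, $\tilde E_7$, $\tilde E_8$: these are trees and give $I_N^*,IV^*,III^*,II^*$.
\end{itemize}
In the $\tilde A_1$ and $\tilde A_2$ cases, local distinctions come from how the curves meet, and $p_a(M_b)=1$ rules out further degeneracies. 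In the tree cases the local configurations are forced because all intersections are transverse single points.

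\emph{Main obstacle.} The main obstacle is the lattice-theoretic classification in Step 4: showing that a connected graph with semidefinite corank-one form is an extended ADE diagram. I would do this by exhibiting, in any non-listed graph, a subgraph whose form is indefinite, or one that strictly contains an extended diagram and hence has no null vector of the right kind. Existence of every type (realizability) is not needed for the statement.
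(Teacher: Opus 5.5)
The paper gives no proof of this statement and simply cites Kodaira and Sch\"utt--Shioda (Theorem 5.12, Proposition 5.15). Your reconstruction is the standard argument from those sources and is correct: Zariski's lemma plus adjunction forces $(-2)$-curves, connected symmetric Cartan-type matrices that are positive semidefinite of corank one are classified, and $\tilde A_1$, $\tilde A_2$ split locally into $I_2/III$ and $I_3/IV$. One small tidy-up: obtain $K_M\cdot C_i=0$ directly rather than from the canonical bundle formula, to avoid any risk of circularity. Indeed $K_M\cdot C_i\ge 0$ for each component of a reducible fiber by relative minimality and adjunction, and $\sum m_i\,K_M\cdot C_i=K_M\cdot F=0$. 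This places the use of assumption (3) in Step 1 rather than Step 3.
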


\begin{figure}
    \centering
    \includegraphics[width=0.85\linewidth]{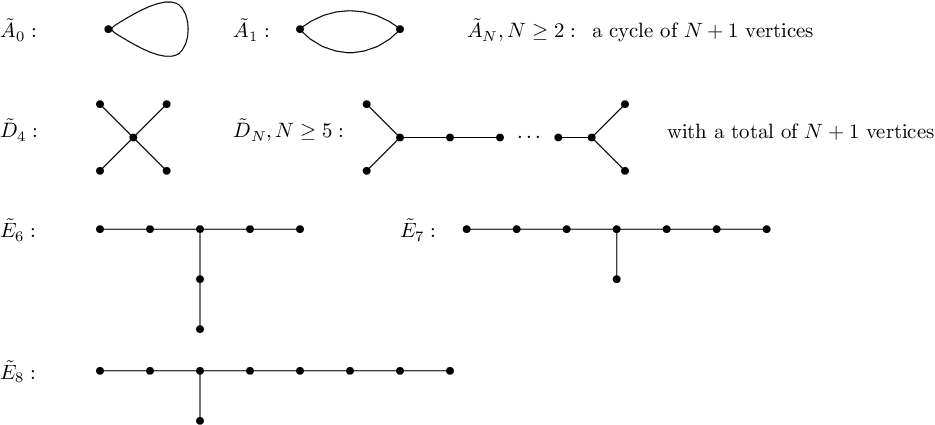}
    \caption{Extended Dynkin Diagrams of type ADE}
    \label{fig: extended Dynkin ADE}
\end{figure}

Consider a reducible fiber $M_b$ with irreducible components $C_0,\dots,C_r$, where $C_0$ is the unique component of $M_b$ intersecting $\sigma_0$. The negative \textit{extended Cartan matrix} $\tilde A$ associated to $M_b$ is the $(r+1)\times (r+1)$ matrix with entries $\tilde A_{i,j}=C_i\cdot C_j$. 
Furthermore, we can construct a (non-extended) Dynkin diagram of type ADE, by removing the vertex corresponding to $C_0$ from the dual graph of $M_b$. There is a naturally associated negative \textit{Cartan matrix} $A$, an $r\times r$ matrix such that
\[
    \tilde A=\begin{pmatrix}
        -2 & v^T \\
        v & A
    \end{pmatrix}.
\]
The matrix $A$ has full rank and $|\det A|$ is equal to the number of components of multiplicity $1$ of $M_b$.

\subsection{Group Structure on Fibers}\label{section: group structure fibers}
We explain how the group structure on the smooth fibers extends to the smooth locus of the singular fibers.
Consider two complex manifolds $\mathcal{G},\Delta$ with a holomorphic submersion $\Psi\colon \mathcal{G}\to \Delta$ and a holomorphic section $e\colon \Delta\to \mathcal{G}$. Assume that for all $t\in \Delta$, the preimage $\Psi^{-1}(t)$ is a Lie group with identity element $e(t)$, whose complex structure is that of a complex submanifold of $\mathcal{G}$. Denote
\[
    \mathcal{D}\coloneqq \{(g_1,g_2)\in \mathcal{G}\times \mathcal{G}: \Psi(g_1)=\Psi(g_2)\}\subset \mathcal{G}\times \mathcal{G}.
\]
$\mathcal{G}$ is an \textit{analytic fiber system of groups} over $\Delta$ if the map 
\[
    \mathcal{D}\to \mathcal{G}, (g_1,g_2)\mapsto g_1g_2^{-1}
\]
is holomorphic.

Let $\pi\colon M\to \Delta$ be an elliptic fibration over the unit disk with only one singular fiber at the origin, such that the zero-section $\sigma_0$ is holomorphic\footnote{Recall that we are assuming that $\sigma_0$ is holomorphic in a neighborhood of singular fibers, so this can be attained by shrinking the codomain.}.
Denote $\Delta'\coloneqq \Delta-\{0\}$ and $M'\coloneqq \pi^{-1}(\Delta')$. Then
$\pi\colon M'\to \Delta'$ is an analytic fiber system of groups over $\Delta'$ with the holomorphic section $e$.

Denote by $M^\#\coloneqq \{x\in M: d\pi_x \text{ has full rank}\}$ and let $M_0^\#\coloneqq M_0\cap M^\#$ be the smooth locus of $M_0$. This means $M^\#=M'\cup M_0^\#$. Furthermore, $M_0^\#$ is the union of the smooth loci of the irreducible components of $M_0$ which have multiplicity $1$.

\begin{theorem}[\protect{\cite[Theorem 9.1]{Kodaira_CompAnalyticSurf2}}]\label{thm: group on sing fibers}
    There exists on $M^\#$ a unique structure of analytic fiber system of abelian groups over $\Delta$, which is an extension of the structure on $M'$ of analytic fiber system of abelian groups over $\Delta'$. The Lie group structure of $M_0^\#$, depending on the Kodaira type of $M_0$ is given below:
    \begin{center}
    \renewcommand{\arraystretch}{1.3}
        \begin{tabular}{c|c}
        \textbf{Type} & \textbf{Group Structure} \\ \hline
        $I_n,n\ge 1$ & $\C^*\times \Z/n\Z$ \\
        $I_{2n}^*$ & $\C\times (\Z/2\Z)^2$\\
        $I_{2n+1}^*$ & $\C\times \Z/4\Z$\\
        $II$ & $\C$\\
        $III$ & $\C\times \Z/2\Z$\\
        $IV$ & $\C\times \Z/3\Z$\\
        $IV^*$ & $\C\times \Z/3\Z$\\
        $III^*$ & $\C\times \Z/2\Z$\\
        $II^*$ & $\C$
    \end{tabular}
    \end{center}
\end{theorem}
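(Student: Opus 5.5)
Although this theorem is cited from Kodaira, I would prove it in three parts: uniqueness, existence, and identification of the group $M_0^\#$.

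\emph{Uniqueness.} Since $M'$ is dense in $M^\#$, the set $\mathcal{D}'=\mathcal{D}\cap(M'\times M')$ is dense in $\mathcal{D}$. Any two holomorphic maps $\mathcal{D}\to M^\#$ extending $(g_1,g_2)\mapsto g_1g_2^{-1}$ on $\mathcal{D}'$ therefore agree. Hence at most one extension exists.

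\emph{Existence.} I would use the Néron model. Since $\sigma_0$ is holomorphic near $0$, the fibration near $M_0$ has a Weierstrass model $W\to\Delta$. It is obtained by contracting all components of $M_0$ not meeting $\sigma_0$. On $W$, the group law is given by the chord–tangent construction and is holomorphic on the smooth locus $W^\#$. Over $0$, the smooth locus $W^\#_0$ of the nodal or cuspidal cubic is $\C^*$ (for $I_n$) or $\C$ (for all additive types). The relatively minimal model $M$ is the minimal resolution of $W$, so $M^\#$ is the Néron model of the generic fiber. By the Néron mapping property, the translation action of $M'$ on itself extends to $M^\#$. Concretely, for a local holomorphic section $s$ of $M^\#$, translation by $s$ is a fiberwise automorphism over $\Delta'$. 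It extends to a bimeromorphic self-map of $M$ preserving the fibration, and by relative minimality (no $(-1)$-curves) this map is biregular. It maps $M^\#$ to itself because it preserves multiplicities of components. Varying $s$ holomorphically then gives holomorphic dependence of $g_1g_2^{-1}$ on $\mathcal{D}$.

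\emph{Identifying $M_0^\#$.} The identity component of $M_0^\#$ is $C_0\cap M^\#$. For type $I_1$ and $I_n$ this is $\p^1$ minus two points. For the additive types it is $\p^1$ minus one point, since $C_0$ meets the rest of the fiber in a single point, or is the cuspidal curve minus its cusp. Because it is a one-dimensional connected complex Lie group with this underlying curve, it is $\C^*$, respectively $\C$. The group of components has order equal to the number of multiplicity-one components, which is $|\det A|$ by the remark after \Cref{thm: Kodaira classification sing fibers}. To pin down its isomorphism type, I would identify it with the discriminant group $A^{-1}\Z^r/\Z^r$ of the root lattice. The map sends a local section to the vector of its intersection numbers with $C_1,\dots,C_r$, viewed modulo the root lattice. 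This map is a homomorphism because the translations act by lattice automorphisms of the fiber components. Reading off the discriminant groups gives the table:
\begin{itemize}
\item $\Z/n$ for $\tilde A_{n-1}$;
\item $(\Z/2)^2$ or $\Z/4$ for $\tilde D_{2n+4}$ or $\tilde D_{2n+5}$;
\item $\Z/3$ for $\tilde E_6$, $\Z/2$ for $\tilde E_7$, and trivial for $\tilde E_8$.
\end{itemize}
Types III and IV fit the same pattern via $\tilde A_1$ and $\tilde A_2$. Each group $M_0^\#$ splits as a product, since $\C$ and $\C^*$ are divisible.

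The main obstacle is the holomorphic extension of translations across $M_0$, i.e.\ the Néron mapping property in this analytic setting. If this is too heavy, I would fall back on Kodaira's explicit local models: quotients of $\Delta\times\C$ by period lattices for $I_n$, and for the additive types the finite cyclic base-change covers that make the fibration smooth. In each model I would write the group law directly.
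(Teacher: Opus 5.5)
The paper quotes this theorem from Kodaira without proof, so your outline can only be judged on its own terms. Uniqueness by density, the identification of $C_0^\#$ with $\C^*$ or $\C$, and the splitting via divisibility are fine.

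\textbf{The gap: the homomorphism to the discriminant group.} This is the step that actually produces the finite factors in the table. You assert that $\phi(s)\coloneqq(s\cdot C_1,\dots,s\cdot C_r)\bmod A\Z^r$ is a homomorphism ``because the translations act by lattice automorphisms of the fiber components.'' Let $T_{s_1}$ be translation by $s_1$. Let $g$ be the isometry of $\bigoplus_{j\ge1}\Z C_j\cong\bigl(\bigoplus_{j\ge0}\Z C_j\bigr)/\Z F$ induced by the way $T_{s_1}^{-1}$ permutes components. Carrying out your reasoning then gives only
\[
\phi(s_1\oplus s_2)=\phi(s_1)+g^*\phi(s_2).
\]
So you still need $g$ to act trivially on the discriminant group, and a lattice automorphism of a fiber need not do so. For example, in an $I_0^*$ fiber, exchanging two of the three end components other than $C_0$ swaps two of the three nonzero elements of $(\Z/2\Z)^2$.

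Knowing only that the component group acts simply transitively on the multiplicity-one components by diagram automorphisms does not rescue this. Both $\Z/4\Z$ and $(\Z/2\Z)^2$ occur as simply transitive subgroups of $\mathrm{Aut}(\tilde D_k)$. Both also occur inside the symmetry group of the $4$-cycle for $I_4$. That distinction is exactly what the rows $I_{2n}^*$, $I_{2n+1}^*$ and $I_4$ assert.

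\textbf{The missing ingredient: a principal-divisor argument.} On the Weierstrass model, for local sections $P,Q$, take the chord function $h=(y-\lambda x-\nu)/(x-x_{P\oplus Q})$, with $\lambda,\nu$ meromorphic in $t$. It has divisor $P+Q-(P\oplus Q)-O$ on nearby smooth fibers. Hence on $M$, over a smaller disk,
\[
\mathrm{div}(h)=\bar P+\bar Q-\overline{P\oplus Q}-\bar O+\sum_j n_jC_j .
\]
A principal divisor has degree $0$ on each compact $C_k$. Intersect with $C_1,\dots,C_r$ after subtracting $n_0F$; this gives $\phi(P\oplus Q)\equiv\phi(P)+\phi(Q)\bmod A\Z^r$.

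You also need injectivity. For a multiplicity-one component $\Theta\neq C_0$, the basis vector $\epsilon_\Theta$ is not in $A\Z^r$, because it corresponds to a minuscule weight, which is not in the root lattice. The order count then finishes the identification.

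\textbf{A lesser issue: existence.} Your appeal to an analytic N\'eron mapping property is essentially the theorem itself. Your concrete version does not show that $T_s|_{M_0}$ depends only on $s(0)$, which you need for the law on $M_0^\#$ to be well defined. It also does not give joint holomorphy in $(g_1,g_2)$. You can close this in two steps. First, lift the action of $W^\#$ on $W$ to the minimal resolution, which commutes with smooth base change. Then, for each multiplicity-one component, compose with the biregular extension of translation by one fixed section through that component.
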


If we denote by $C_0^\#$ the smooth part of the zero-component of $M_0$, there is the following short exact sequence of topological groups:
\[
    0\to C_0^\#\to M_0^\# \to \pi_0(M_0^\#)\to 0.
\]

A singular fiber $M_0$ of an elliptic surface with irreducible component $C_0$ intersecting the zero-section is of one of the following types: \textit{multiplicative}, if $C_0^\#\cong \C^*$, and \textit{additive}, if $C_0^\#\cong \C$. The only multiplicative fibers are those of type $I_n$, while all others are additive. Furthermore, a singular fiber is additive if and only if it is simply-connected.

\begin{lemma}\label{lem: euler char of sing fibers}
    Let $M_0$ be a singular fiber of an elliptic surface with $\mathrm{nr}(M_0)$ irreducible components.
    \begin{itemize}
        \item If $M_0$ is multiplicative, then $\chi(M_0)=\mathrm{nr}(M_0)$.
        \item If $M_0$ is additive, then $\chi(M_0)=\mathrm{nr}(M_0)+1$.
    \end{itemize}
\end{lemma}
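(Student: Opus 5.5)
Since every irreducible component of a singular fiber $M_0$ is a compact Riemann surface, I would compute $\chi(M_0)$ directly from its combinatorial structure. The plan is to view $M_0$ as a union of its components glued along finitely many points. By inclusion--exclusion, $\chi(M_0)$ equals the sum of the Euler characteristics of the normalized components, minus a correction for each identification of points. For the reducible types I would express this as $\chi = \sum_i \chi(C_i) - (\#\text{edges of the dual graph}) + (\text{corrections for non-transversal or triple-point gluings})$. Since $\chi(\mathbb{P}^1)=2$, it is cleaner to use the dual graph $\Gamma$: if all intersections are transverse double points, then $M_0$ is homotopy equivalent to a wedge of $\mathrm{nr}(M_0)$ spheres with the graph $\Gamma$. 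Hence
\[
\chi(M_0)=2\,\mathrm{nr}(M_0)-\#E(\Gamma)=\mathrm{nr}(M_0)+\chi(\Gamma).
\]

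I would then run through the cases given by \Cref{thm: Kodaira classification sing fibers}.

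\textbf{Multiplicative case} ($I_N$). For $N\ge 2$, $\Gamma=\tilde A_{N-1}$ is a cycle, so $\chi(\Gamma)=0$ and $\chi(M_0)=N$. For $I_1$, a nodal rational curve is a sphere with two points identified. This gives $\chi=2-1=1=\mathrm{nr}$. (The case $N=2$ fits the same count, since two spheres meeting in two points give $\chi=4-2=2$.)

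\textbf{Additive case.} For $I_N^*$, $IV^*$, $III^*$ and $II^*$, the intersections are transverse and the dual graph is a tree ($\tilde D_{N+4}$, $\tilde E_6$, $\tilde E_7$, $\tilde E_8$), so $\chi(\Gamma)=1$ and $\chi(M_0)=\mathrm{nr}+1$. The remaining additive types need separate treatment, since their dual graph does not describe the topology correctly:
\begin{itemize}
\item Type $II$: a cuspidal cubic is homeomorphic to $S^2$ (the cusp is topologically a smooth point), so $\chi=2=1+1$.
\item Type $III$: two spheres tangent at a single point meet topologically in one point, so $\chi=2+2-1=3=2+1$.
\item Type $IV$: three spheres through one common point give $\chi=3\cdot 2-2=4=3+1$, since the three points are identified to one.
\end{itemize}

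The only step needing care is justifying that the homotopy type, or inclusion--exclusion, depends only on the set-theoretic intersection points and not on tangency or multiplicity. This holds because $M_0$ is a finite CW complex obtained from the disjoint union of the normalizations of its components by identifying finitely many points. The normalization of each component is $\mathbb{P}^1$, and the multiplicities $m_i$ play no role in the topology of the reduced fiber. As a cross-check, these values agree with the standard Euler numbers $N$, $N+6$, $2,3,4,8,9,10$. The statement is consistent with the known fact that additive fibers are exactly the simply connected ones, which is where the $+1$ comes from: the graph or gluing pattern is a tree.
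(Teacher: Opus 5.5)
Your proposal is correct and takes the same route as the paper, whose proof just runs through Kodaira's classification and applies inclusion--exclusion to the underlying topological space. Your formula $\chi(M_0)=\mathrm{nr}(M_0)+\chi(\Gamma)$ for the transverse-crossing types, together with the separate treatment of types $II$, $III$, $IV$ (where the dual graph miscounts the topological intersection points), supplies exactly the case-by-case detail the paper leaves implicit.
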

\begin{proof}
    This follows directly by considering each case in Kodaira's classification and using inclusion-exclusion for the underlying topological space.
\end{proof}

\subsection{The holomorphic (Narrow) Mordell-Weil Group}
The \textit{holomorphic Mordell-Weil group} of an elliptic fibration $\pi\colon M\to B$ is 
\[
    \MWH(\pi)=\{f\in \mathrm{Aut}(M): f \text{ acts on each smooth fiber by translation}\},
\]
where $\mathrm{Aut}(M)$ denotes the group of biholomorphic automorphisms of $M$. $\MWH(\pi)$ can be expressed in terms of the following two lattices (see \cite[Theorem 6.5]{Schutt_Schioda_MordellWeil}):
\begin{itemize}
    \item The \textit{Néron-Severi group} $\NS(M)$ is the group of divisors of $M$ modulo algebraic equivalence. The Lefschetz (1,1)-theorem implies that
    \begin{equation}\label{eq: Lefschetz 1,1 for NS}
    \NS(M)=H^2(M;\Z)\cap H^{1,1}(M).
    \end{equation}
    \item The \textit{trivial lattice} $\triv(\pi)\subset \NS(M)$ is the lattice generated by the zero-section and the fiber components.\footnote{This is equivalent to our earlier definition of the trivial lattice, since the class of the generic fiber is equal in homology to a weighted sum of the irreducible components of a singular fiber.}
\end{itemize}

\begin{proposition}[Shioda-Tate Formula, \protect{\cite[Corollary 6.7]{Schutt_Schioda_MordellWeil}}]\label{cor: Shioda Tate Formula}
    Let $\pi\colon M\to B$ be an elliptic fibration with set of critical values $R\subset B$. For each $b\in R$, let $\nr(b)$ denote the number of components of the singular $M_b$. Then
    \[
    \mathrm{rank}(\NS(M))=\mathrm{rank}(\MWH(\pi))+2+\sum_{b\in R} (\nr(b)-1).
    \]
\end{proposition}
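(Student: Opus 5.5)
This is a cited result (Corollary 6.7 in Schütt--Shioda), so the plan is to recover it from the lattice description of $\MWH(\pi)$ in \cite[Theorem 6.5]{Schutt_Schioda_MordellWeil}. That theorem gives an isomorphism
\[
\MWH(\pi)\cong \NS(M)/\triv(\pi).
\]
It is induced by sending a holomorphic section $P$ to its class in $\NS(M)$ modulo $\triv(\pi)$; translation by $P$ is the corresponding automorphism. Taking ranks yields
\[
\rk\NS(M)=\rk\MWH(\pi)+\rk\triv(\pi),
\]
so everything reduces to the single identity $\rk\triv(\pi)=2+\sum_{b\in R}(\nr(b)-1)$.

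To compute $\rk\triv(\pi)$, I would first write down a generating set of $\triv(\pi)$ and then show it is linearly independent using the intersection form. The generators are:
\begin{itemize}
\item the zero-section class $[\sigma_0]$;
\item the fiber class $[F]$;
\item for each $b\in R$, the non-identity components $C_{b,1},\dots,C_{b,r_b}$, where $r_b=\nr(b)-1$.
\end{itemize}
These span $\triv(\pi)$. The point is that the identity component satisfies $C_{b,0}=[F]-\sum_{i\ge1} m_iC_{b,i}$ with $m_0=1$, so the zero-components are already in the span. This is the footnoted remark that $[F]$ equals a weighted sum of the components of any singular fiber.

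For linear independence, I would compute the Gram matrix of these $2+\sum_b r_b$ classes. Its entries are as follows:
\begin{itemize}
\item $F^2=0$ and $F\cdot\sigma_0=1$;
\item $F\cdot C_{b,i}=0$, since the components lie in a fiber;
\item $\sigma_0\cdot C_{b,i}=0$ for $i\ge1$, since $\sigma_0$ meets only $C_{b,0}$;
\item $C_{b,i}\cdot C_{b',j}=0$ for $b\ne b'$, since distinct fibers are disjoint.
\end{itemize}
Hence the Gram matrix is block diagonal. There is one block $\begin{pmatrix}\sigma_0^2&1\\1&0\end{pmatrix}$, which has determinant $-1$. The remaining blocks are the negative Cartan matrices $A_b$ of \Cref{section: Kodaira classification}. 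For irreducible fibers ($\nr(b)=1$) the block $A_b$ is empty.

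Each $A_b$ has full rank, as recalled after \Cref{thm: Kodaira classification sing fibers}, because it is a negative-definite ADE Cartan matrix. So the whole Gram matrix is nondegenerate, with determinant $-\prod_b\det A_b\neq0$. The classes are therefore linearly independent, and $\rk\triv(\pi)=2+\sum_{b\in R}(\nr(b)-1)$. Substituting this into the rank identity above gives the formula.

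The main obstacle is the input \cite[Theorem 6.5]{Schutt_Schioda_MordellWeil}, in particular that $\NS(M)/\triv(\pi)$ is identified with $\MWH(\pi)$ up to finite groups. Its injectivity part uses the fact that a section whose class lies in $\triv(\pi)$ must be torsion. The paper's assumptions \ref{item: assumption 1}--\ref{item: assumption 3} are needed here: a section and a nontrivial fibration are required for that argument. Since this is cited, I would take it as given and present only the rank computation above.
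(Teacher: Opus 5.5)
Your proposal is correct and is the standard derivation of the cited Corollary 6.7 in Schütt--Shioda. You take ranks in the isomorphism $\MWH(\pi)\cong\NS(M)/\triv(\pi)$ of their Theorem 6.5 and compute $\rk\triv(\pi)=2+\sum_{b\in R}(\nr(b)-1)$ from the orthogonal splitting into $\langle[\sigma_0],[F]\rangle$ and the Cartan blocks $A_b$, which is the same Gram-matrix argument the paper itself uses in \Cref{rmk: unimodularity} and in the proof of \Cref{cor: rk MW_0 thm}.

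One small correction to your last paragraph, which does not affect the rank count: Theorem 6.5 is an honest isomorphism, not merely one up to finite groups. A section whose class lies in $\triv(\pi)$ is the zero section itself; the torsion sections correspond instead to $\Bar{T}/\triv(\pi)$, where $\Bar{T}$ is the primitive closure of $\triv(\pi)$ in $\NS(M)$.
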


\noindent The \textit{holomorphic narrow Mordell-Weil group} is 
\[
    \MWH_0(\pi)=\{f\in \MW^\text{hol}(\pi): f \text{ takes each irreducible component of each singular fiber to itself}\}.
\]
$\MWH_0$ is a torsion-free subgroup of finite index in $\MW^\text{hol}$.

\begin{proposition}[\protect{\cite[Theorem 6.47]{Schutt_Schioda_MordellWeil}}] 
\label{prop: MW_0 hol is perp of triv}
    For an elliptic fibration $\pi\colon M\to B$, there is a natural group isomorphism 
    \[
        \MWH_0(\pi)\cong \triv(\pi)^\perp \subset \NS(M),
    \]
    where $\perp$ is taken with respect to the intersection form in $\NS(M)\subset H^2(M)$.
\end{proposition}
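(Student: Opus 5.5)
The plan is to follow Shioda's theory of the Mordell--Weil lattice. The first step is an algebraic identification of $\MWH(\pi)$ with a quotient of $\NS(M)$. Write $K=\C(B)$ and $E/K$ for the generic fiber, which is an elliptic curve with origin $O=\sigma_0$. Holomorphic sections of $\pi$ correspond to points of $E(K)$, and such points act by translation, so $\MWH(\pi)\cong E(K)$. Now restrict divisors to the generic fiber and sum them in the group law. This gives a homomorphism
\[
\NS(M)\to E(K),\qquad D\mapsto \textstyle\sum (D|_E)-\deg(D|_E)\,O .
\]
It is surjective, since a section $P$ is the image of its curve class $(P)$. I claim its kernel is exactly $\triv(\pi)$. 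Suppose $D|_E$ is linearly equivalent on $E$ to $\deg(D|_E)\cdot O$. Spreading this rational function over $B$, we see that $D-\deg(D|_E)\,(O)$ is linearly equivalent to a vertical divisor. Every vertical divisor is a combination of fiber components, so $D\in\triv(\pi)$. Hence $\NS(M)/\triv(\pi)\cong \MWH(\pi)$.

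The second step constructs a splitting. $\triv(\pi)$ is the orthogonal sum of the hyperbolic plane spanned by $(O),F$ and the negative definite root lattices $A_v$ spanned by the non-zero components of each reducible fiber $M_v$. In particular $\triv(\pi)$ is nondegenerate, so $\NS(M)\otimes\Q=\triv(\pi)_\Q\oplus\triv(\pi)^\perp_\Q$. Let $\varphi(P)$ be the orthogonal projection of $(P)$ to $\triv(\pi)^\perp_\Q$. Explicitly,
\[
\varphi(P)=(P)-(O)-\bigl((P)\cdot(O)+\chi(\mathcal O_M)\bigr)F-\sum_v c_v(P),
\]
where $c_v(P)\in A_v\otimes\Q$ is determined by the vector $\bigl((P)\cdot\Theta_{v,i}\bigr)_i$ through the inverse of the Cartan matrix $A$. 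Now compare $\varphi(P+Q)$ with $\varphi(P)-\varphi(Q)$. By the first step, $(P+Q)-(P)-(Q)+(O)$ lies in $\triv(\pi)$. The projections of this element therefore lie in $\triv(\pi)_\Q\cap\triv(\pi)^\perp_\Q=0$, so $\varphi$ is a homomorphism. It is injective because $\varphi(P)=0$ forces $(P)\in\triv(\pi)_\Q$, and then $(P)$ maps to $0$ in $E(K)$ by the first step (using that the kernel of $\NS\to E(K)$ is saturated after tensoring, because $E(K)$ is computed on integral divisors).

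The third step identifies the image of $\MWH_0(\pi)$. If $P$ meets every reducible fiber in its zero component $C_0$, then $(P)\cdot\Theta_{v,i}=0$ for every non-zero component $\Theta_{v,i}$. Hence every $c_v(P)=0$, so $\varphi(P)$ is integral and lies in $\triv(\pi)^\perp\subset\NS(M)$.

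The main obstacle is surjectivity onto $\triv(\pi)^\perp$. Take $D\in\triv(\pi)^\perp$ and let $P$ be its image in $E(K)$. Then $(P)-D\in\triv(\pi)$, so $\varphi(P)=D$ after projecting. Since $D$ and $(P)$ are integral, the correction $\sum_v c_v(P)$ must be an integral element of $\triv(\pi)$, which means each $c_v(P)$ lies in $A_v$ itself. The vector $\bigl((P)\cdot\Theta_{v,i}\bigr)_i$ is either $0$ or a unit vector $e_i$, because $P$ meets exactly one multiplicity-one component of $M_v$. I would then check, fiber type by fiber type using Kodaira's classification (\Cref{thm: Kodaira classification sing fibers}), that for each multiplicity-one component $\Theta_i\neq C_0$ the vector $A^{-1}e_i$ is non-integral. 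Equivalently, $e_i$ represents a nonzero class in the discriminant group $A^{-1}\Z^r/\Z^r$, whose order is $|\det A|$, the number of multiplicity-one components. This forces $P$ to meet $C_0$ in every fiber, so $P\in\MWH_0(\pi)$, and this completes the isomorphism $\MWH_0(\pi)\cong\triv(\pi)^\perp$.
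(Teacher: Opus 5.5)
The paper gives no proof of this statement; it simply cites \cite[Theorem 6.47]{Schutt_Schioda_MordellWeil}. Your outline is essentially Shioda's argument from that source: $\NS(M)/\triv(\pi)\cong E(K)$, the orthogonal projection $\varphi$, and integrality of $\varphi(P)$ singling out the narrow sections.

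\textbf{The injectivity step is false as stated.} The map $\varphi$ is \emph{not} injective on $E(K)$, and the kernel of $\NS(M)\to E(K)$ is \emph{not} saturated. That kernel is $\triv(\pi)$. Its primitive closure $\overline{\triv(\pi)}$ in $\NS(M)$ satisfies $\overline{\triv(\pi)}/\triv(\pi)\cong\MWH(\pi)_{\mathrm{tors}}$, and the paper uses exactly this in \Cref{rmk: unimodularity}. If $\varphi(P)=0$, then $(P)\in\triv(\pi)_\Q\cap\NS(M)$. This only gives $n(P)\in\triv(\pi)$ for some $n\ge1$, i.e.\ $nP=O$. So $\ker\varphi=E(K)_{\mathrm{tors}}$, which is nonzero for example for the rational elliptic surface with fibers $I_9+3I_1$, where $E(K)\cong\Z/3\Z$.

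For the proposition you only need injectivity on $\MWH_0(\pi)$, and that does hold, for a different reason. For narrow $P$ all $c_v(P)=0$, so $\varphi(P)=(P)-(O)-kF$ with $k\in\Z$ is an integral class. If it vanishes in $\NS(M)\otimes\Q$, it vanishes in $\NS(M)$, which is torsion-free as a subgroup of $H^2(M)$. Hence $(P)\in\triv(\pi)$, and $P=O$ by your first step. This is precisely the torsion-freeness of $\MWH_0(\pi)$.

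\textbf{The surjectivity step has the same confusion, though its conclusion survives.} Integrality of $D$ and $(P)$ alone only places $\sum_v c_v(P)$ in $\overline{\triv(\pi)}$. That does not force $c_v(P)\in A_v$: a torsion section $P\neq O$ has $\varphi(P)=0$ integral, yet some $c_v(P)\notin A_v$, since otherwise $(P)\in\triv(\pi)$. The conclusion holds only because you chose $P$ as the image of $D$. Then $(P)-D\in\triv(\pi)$, so $\sum_v c_v(P)=((P)-D)-(O)-kF$ lies in $\triv(\pi)=(\Z(O)\oplus\Z F)\oplus\bigoplus_v A_v$. Comparing components gives $c_v(P)\in A_v$. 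Your discriminant-group check then finishes the argument; it is correct, since for each multiplicity-one $\Theta_i\neq C_0$ one has $A^{-1}e_i\notin\Z^r$, verified type by type.

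\textbf{A missing justification in the first step.} You should say why $D\mapsto\sum(D|_E)$ descends from divisors to $\NS(M)$. This needs $\mathrm{Pic}^0(M)=\pi^*\mathrm{Pic}^0(B)$, which follows from $q(M)=g$ together with injectivity of $\pi^*$ coming from the section. This is where non-triviality of $\pi$ enters.
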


\section{First Properties of the Narrow Mordell-Weil Group}
\label{section: def and first props}
Let $\pi\colon M\to B$ be an elliptic fibration. 
Define
\[
\trans(\pi)\coloneqq \{f\in \diff(M)\mid f \text{ acts on each smooth fiber by translation}\}.
\]
For each $f\in \trans(\pi)$, there exists a naturally associated section $\sigma_f=f\circ\sigma_0$.
Further define
\[
    \trans_0(\pi)\coloneqq \{f\in \trans(\pi)\mid f \text{ takes each irreducible component of each fiber to itself} \}.
\]
If $f\in \trans_0(\pi)$, the associated section $\sigma_f$ intersects each singular fiber in the same reducible component as the zero-section.
We can express the (narrow) Mordell-Weil group in terms of the translation groups: $\MW(\pi) = \pi_0(\trans(\pi))$ and $\MW_0(\pi) = \pi_0(\trans_0(\pi))$.

The following lemma shows that a smooth section of $\pi$ naturally defines a diffeomorphism of $M$. 
\begin{lemma}[\textbf{Sections define diffeomorphisms}]\label{lem: sections give diffeomorphisms}
    Consider an elliptic fibration $\pi\colon M\to B$ over a disk $B$ with only one critical value at the origin.
    For any smooth section $\sigma\colon B\to M$ there exists a unique diffeomorphism $f_\sigma$ of $M$ that acts by translation by $\sigma$ on each smooth fiber of $M$ in the group law with identity given by $\sigma_0$.
\end{lemma}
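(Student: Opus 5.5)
The plan is to prove uniqueness by a density argument, and existence by reducing an arbitrary smooth section to two kinds of translations that can be controlled across the singular fiber: translation by a \emph{holomorphic} section, and a fiberwise flow of holomorphic vertical vector fields.

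\textbf{Uniqueness.} The union of the smooth fibers $M'=\pi^{-1}(\Delta')$ is dense in $M$. On $M'$ the requirement ``translation by $\sigma$'' pins down $f_\sigma$ pointwise: $f_\sigma(x)=x+\sigma(\pi(x))$. A continuous extension to $M$ is therefore unique.

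\textbf{Behaviour on $M^\#$.} As in the chain-rule argument of \Cref{section: Kodaira classification}, $\sigma(0)$ lies in $M_0^\#$. By \Cref{thm: group on sing fibers}, $M^\#$ is an analytic fiber system of abelian groups over $\Delta$. Hence fiberwise addition and inversion $M^\#\times_\Delta M^\#\to M^\#$ are holomorphic, in particular smooth. So $x\mapsto x+\sigma(\pi(x))$ is a diffeomorphism of $M^\#$, with inverse $x\mapsto x-\sigma(\pi(x))$. The real content is the extension across $M\setminus M^\#$. This set consists of the components of multiplicity $\ge 2$ together with the singular points of reduced components, so it can contain whole curves and Hartogs-type arguments do not apply.

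\textbf{Step 1: holomorphic sections.} After shrinking $\Delta$, the holomorphic submersion theorem at $\sigma(0)\in M_0^\#$ gives a holomorphic section $\tau$ with $\tau(0)=\sigma(0)$. Translation by $\tau$ is a biholomorphism of $M'$ over $\Delta'$, so it defines a bimeromorphic self-map of $M$ over $\Delta$. Since $M$ is relatively minimal (assumption \ref{item: assumption 3}), a bimeromorphic map over $\Delta$ between relatively minimal elliptic surfaces extends to a biholomorphism; this is the uniqueness of the relatively minimal model. Hence $f_\tau$ exists and is holomorphic on all of $M$.

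\textbf{Step 2: the smooth correction.} The remaining section $\rho=\sigma-\tau$ is smooth, takes values in $M^\#$, and satisfies $\rho(0)=\sigma_0(0)$. Shrinking $\Delta$ further, $\rho$ lies in the identity components of the fibers, a holomorphic family of connected complex $1$-dimensional groups. Let $v$ be a nowhere-vanishing holomorphic section of the Lie algebra line bundle $\sigma_0^*T_{M/\Delta}$. I will write $\rho(b)=\exp(h(b)\,v(b))$ with $h$ a smooth complex function and $h(0)=0$; this is possible because the fiberwise exponential is a local biholomorphism near the zero section.

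For each constant $w\in\C$, translation by the holomorphic section $\exp(wv)$ extends to a biholomorphism of $M$ by Step 1. This yields a holomorphic action of $(\C,+)$ on $M$ by fiber-preserving maps, generated by two commuting real vertical vector fields $V_1,V_2$ whose flows are complete. I then set
\[
g(x)=\Phi^{V_1}_{\mathrm{Re}\,h(\pi(x))}\circ\Phi^{V_2}_{\mathrm{Im}\,h(\pi(x))}(x).
\]
This map is smooth because the joint flow map $\C\times M\to M$ is smooth and $h\circ\pi$ is smooth. Its inverse is obtained by replacing $h$ with $-h$, and on smooth fibers it is translation by $\rho$.

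\textbf{Conclusion.} Set $f_\sigma=g\circ f_\tau$ near the singular fiber. Away from it, $f_\sigma$ is given by the formula on $M'$. The two definitions agree on overlaps by uniqueness, so they glue to the desired diffeomorphism of $M$.

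\textbf{Main obstacle.} The key step is the extension of holomorphic translations across $M_0$ in Step 1. There I would invoke the uniqueness of relatively minimal models, or equivalently Kodaira's result that the action of $M^\#$ on itself extends to an action on $M$.
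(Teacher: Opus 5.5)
\textbf{The gap is in Step 2.} Your uniqueness argument and the gluing with the formula on $M'$ agree with the paper. The problem is the sentence ``This yields a holomorphic action of $(\C,+)$ on $M$.''

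Step 1 produces, for each fixed $w\in\C$ separately, a biholomorphism $T_w$ of $M$ extending translation by $\exp(wv)$. It says nothing about how $T_w(x)$ depends on $w$ when $x$ lies on a component of multiplicity $\ge 2$ or at a singular point of $M_0$. The relation $T_wT_{w'}=T_{w+w'}$ does hold, by density, but on its own it does not give even continuity of $(w,x)\mapsto T_w(x)$ at those points.

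Consequently the vector fields $V_1,V_2$ are not known to exist along $M_0\setminus M_0^\#$. Smoothness of $g$ near that set is, as you say yourself, the entire content of the lemma. You have relocated it into that one sentence rather than proved it.

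Your closing remark is also not an equivalence. Extension of individual holomorphic translations, which is what uniqueness of relatively minimal models gives, is weaker than a jointly holomorphic action map $M^\#\times_\Delta M\to M$. Step 2 needs the stronger statement.

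\textbf{How the paper handles it.} The paper uses exactly that stronger statement. It chooses a holomorphic local section $e$ through $\sigma_0(0)$, and uses the N\'eron-model property that $M_U^\#$ acts on $M_U$ by a holomorphic map $\alpha\colon M_U^\#\times_U M_U\to M_U$. It then sets $f_U(x)=\alpha(s(\pi(x)),x)$ with $s=\sigma-_e\sigma_0$. This is smooth simply because $\alpha$ is holomorphic and $s$ is smooth. With that input, your decomposition into a holomorphic translation plus a flow is unnecessary. Working with $e$ also removes your tacit need, in Step 1, for $\sigma_0$ to be holomorphic near $0$.

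\textbf{How to repair your route.} You can supply the missing joint holomorphy directly:
\begin{enumerate}
\item By relative minimality and the canonical bundle formula, $K_M$ is trivial over a small disk $U$. So there is a nowhere-vanishing holomorphic $2$-form $\Omega$ on $\pi^{-1}(U)$.
\item The field $V$ defined by $\iota_V\Omega=\pi^*dt$ is holomorphic on all of $\pi^{-1}(U)$ and vertical.
\item On each smooth fiber, $V$ is a nonvanishing, hence invariant, holomorphic field.
\item Its complex-time flow is therefore a jointly holomorphic, complete $\C$-action extending the fiberwise translations, which is what Step 2 needs.
\end{enumerate}
Step 1 also needs a word on why translation by $\tau$ is meromorphic across $M_0$, for instance via a Weierstrass model. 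Holomorphy on $M'$ alone allows essential singularities along a divisor.
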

\begin{proof}
    The definition of a section implies $\pi\circ \sigma_0=\id_B$, so $d\pi_{\sigma_0(0)}d(\sigma_0)_0=\id_{T_0B}$. Thus, $\pi\colon M\to B$ is a holomorphic submersion at $\sigma_0(0)$, so there exists a neighborhood $U\subset B$ of the origin and a holomorphic section $e\colon U\to M_U\coloneqq \pi^{-1}(U)$ with $e(0)=\sigma_0(0)$.
    The smooth locus $M_U^\#$ is the analytic Néron model with respect to $e$ and it acts holomorphically on the entire $M_U$. This means there is a holomorphic action 
    \[
        \alpha\colon M_U^\#\times_U M_U\to M_U,
    \]
    extending the self-action by translation of $M_U^\#$ (see \cite[Lemma 10.2.12(c)]{Liu_AG_ArithmCurves}). 
    Since the image of any section is contained in the smooth locus of $\pi$, we can define the smooth section 
    \[
        s\coloneqq \sigma|_U-_e\sigma_0|_U \colon U\to M_U^\#,
    \]
    where $-_e$ is taken with respect to the group law defined by $e$ on the smooth locus of each fiber.
    Define $f_U\colon M_U\to M_U$ by $f_U(x)=\alpha(s(\pi(x)),x)$. Since $\alpha$ is holomorphic and $\sigma,\sigma_0$ are smooth, $f_U$ is a smooth map. The map $g_U\colon M_U\to M_U$, given by $g_U(x)=\alpha(-_e s(\pi(x)),x)$ is its inverse, so $f_U$ is a diffeomorphism of $M_U$. 
    
    Define $F\colon M-M_0\to M-M_0$ as the diffeomorphism 
    $F(x)=x+_{\sigma_0(\pi(x))} \sigma(\pi(x))$. Its inverse is translation by the inverse of $\sigma(\pi(x))$ in the group law of $\sigma_0(\pi(x))$.
    For $b\in U-\{0\}$, the group laws with respect to $\sigma_0(b)$ and $e(b)$ satisfy the following relation:
    \[
        \forall x\in M_b: x+_e\sigma(b)-_e \sigma_0(b) = x+_{\sigma_0(b)} \sigma(b).
    \]
    Thus, $F$ and $f_U$ coincide where their domains overlap and so do their inverses. Thus, they glue to a diffeomorphism $f_\sigma$ of $M$.

    Finally, any diffeomorphism $f$ that acts by translation by $\sigma$ on the smooth fibers of $M$ coincides with $f_\sigma$ on $M-M_0$. Since this set is dense, continuity implies that $f=f_\sigma$.
\end{proof}

\subsection{Relation between Different Versions of Mordell-Weil Groups}
We show that the different versions of Mordell-Weil groups relate to each other as one would expect.
\begin{lemma} \label{lem: hol MW is subgroup of smooth}
Let $\pi\colon M\to B$ be an elliptic fibration.
    The natural inclusion $\MWH(\pi)\xhookrightarrow{}\trans(\pi)$ induces an injection $\MWH(\pi)\xhookrightarrow{}\MW(\pi)$. Furthermore, the natural inclusion $\MWH_0(\pi)\xhookrightarrow{}\trans_0(\pi)$ induces an injection $\MWH_0(\pi)\xhookrightarrow{}\MW_0(\pi)$.
\end{lemma}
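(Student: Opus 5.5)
We need to show that if $f\in\MWH(\pi)$ is isotopic to the identity through $\trans(\pi)$ (i.e.\ lies in the identity component of $\trans(\pi)$), then $f=\id$. Note that the inclusion $\MWH(\pi)\subset \trans(\pi)$ is clear, since a biholomorphism acting by translation on smooth fibers is in particular a diffeomorphism. The map to $\pi_0$ is a group homomorphism, so it suffices to show its kernel is trivial.

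The plan is to use the action on $H^2(M)$, or more concretely on sections. Given $f\in\MWH(\pi)$, its associated section $\sigma_f=f\circ\sigma_0$ is a holomorphic section. Suppose $f_t$ is a path in $\trans(\pi)$ from $\id$ to $f$. Then $\sigma_{f_t}=f_t\circ\sigma_0$ is a homotopy of sections from $\sigma_0$ to $\sigma_f$, so $[\sigma_f]=[\sigma_0]$ in $H_2(M)$. The key step is then the holomorphic rigidity: two distinct holomorphic sections have nonnegative intersection number. On the other hand, a section $\sigma$ of an elliptic surface is a smooth rational-genus-$g$ curve with $\sigma^2=-\chi(\mathcal{O}_M)=-d<0$ (by adjunction, since $K_M$ is a multiple of the fiber class and $\sigma\cdot F=1$). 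Hence if $\sigma_f\neq\sigma_0$ we would get $0\le \sigma_f\cdot\sigma_0=\sigma_0^2=-d<0$, using $d\ge1$ from $\chi(M)>0$. Therefore $\sigma_f=\sigma_0$ as curves, so $f$ fixes the zero-section pointwise; translating on each smooth fiber by $0$, $f$ is the identity on the dense set $M-\pi^{-1}(R)$, hence $f=\id$ by continuity. This is essentially the uniqueness argument at the end of \Cref{lem: sections give diffeomorphisms}.

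For the narrow statement, note first that $\MWH_0(\pi)\subset\trans_0(\pi)$ by definition, since elements of $\MWH_0(\pi)$ preserve each irreducible component. The inclusion of groups $\trans_0(\pi)\subset\trans(\pi)$ gives a commutative square
\[
\begin{tikzcd}
\MWH_0(\pi)\arrow[r]\arrow[d,hook] & \MW_0(\pi)\arrow[d]\\
\MWH(\pi)\arrow[r,hook] & \MW(\pi)
\end{tikzcd}
\]
so injectivity of the top map follows from the injectivity already proven for the bottom map. More directly, an isotopy in $\trans_0(\pi)$ is in particular an isotopy in $\trans(\pi)$, and the same argument applies.

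The main obstacle is the self-intersection computation $\sigma_0^2=-d$. I would justify it by adjunction: $2g-2=\sigma^2+K_M\cdot\sigma$, with $K_M=\pi^*(K_B+L)$ for the fundamental line bundle $L$ of degree $d$. This gives $K_M\cdot\sigma=2g-2+d$, hence $\sigma^2=-d$. I would cite the standard reference \cite{Schutt_Schioda_MordellWeil} for this. The fact that distinct irreducible curves meet nonnegatively is standard positivity of intersections.
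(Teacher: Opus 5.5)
Your proposal is correct and follows essentially the same route as the paper. An isotopy through translations forces $[\sigma_f]=[\sigma_0]$ in $H_2(M)$, and positivity of intersections between distinct holomorphic curves then contradicts $\sigma_0^2=-d<0$. Your write-up also spells out two points the paper leaves implicit: that $\sigma_f=\sigma_0$ forces $f=\id$ by density, and that the narrow case follows from the commutative square.
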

\begin{proof}
    It suffices to show that if two holomorphic sections $\sigma_1,\sigma_2\colon B\to M$ are isotopic then they are equal. Assume there is an isotopy $H\colon B\times [0,1]\to M$ from $\sigma_1$ to $\sigma_2$. Then $\mathrm{im}(H)$ is a singular $3$-chain in $M$ with boundary $\sigma_2(B)-\sigma_1(B)$, so $[\sigma_2(B)]=[\sigma_1(B)]$ in $H_2(M)$. If we further assume that $\sigma_1\ne\sigma_2$ as functions, the fact that they are holomorphic, together with compactness of $B$, implies that $\sigma_1(x)=\sigma_2(x)$ for at most finitely many $x\in B$. 
    For any $x,y\in B$, the equality $\sigma_1(x)=\sigma_2(y)$ implies $x=y$, since $\sigma_1,\sigma_2$ are sections of $\pi$.
    Thus, their images are disjoint or intersect in finitely many points. The fact that $\sigma_1(B)$ and $\sigma_2(B)$ are complex curves implies $[\sigma_1(B)]^2=[\sigma_1(B)]\cdot [\sigma_2(B)] \ge 0$. However, $[\sigma_1(B)]^2=-d$ and non-triviality of $\pi$ implies $d>0$ (see \cite[Corollary 5.45]{Schutt_Schioda_MordellWeil}). This is a contradiction.
    The second assertion follows similarly.
\end{proof}

The following shows that the obvious map $\MW_0(\pi)\to \MW(\pi)$ is injective. This is not clear a priori, since the notion of isotopy is different.
\begin{proposition} \label{prop: MW0 is subgroup of MW}
    Let $\pi\colon M\to B$ be an elliptic fibration.
    If a diffeomorphism $f\in \trans_0(\pi)$
    represents $[\id_M]$ in $\MW(\pi)$, then it also represents $[\id_M]$ in $\MW_0(\pi)$.
\end{proposition}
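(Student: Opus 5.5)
Let $f\in\trans_0(\pi)$ and let $(f_t)_{t\in[0,1]}$ be a path in $\trans(\pi)$ with $f_0=\id_M$ and $f_1=f$. I will show that in fact every $f_t$ lies in $\trans_0(\pi)$. The path is then already a path in $\trans_0(\pi)$, so $f$ represents $[\id_M]$ in $\MW_0(\pi)$.

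The key observation is that the condition ``takes each irreducible component of each fiber to itself'' is a discrete invariant that is locally constant along continuous paths in $\trans(\pi)$. Every $g\in\trans(\pi)$ preserves each fiber: it is fiber-preserving on the dense set of smooth fibers, so by continuity it maps each singular fiber $M_b$ into itself. Being a diffeomorphism, $g$ restricts to a homeomorphism of $M_b$. In particular it preserves the smooth locus $M_b^\#$, which is characterized as the set of points where $d\pi$ has full rank and is a manifold point of the fiber, and so $g$ permutes the finitely many connected components of $M_b^\#$. By \Cref{thm: group on sing fibers} and the short exact sequence $0\to C_0^\#\to M_b^\#\to\pi_0(M_b^\#)\to 0$, these components are exactly the multiplicity-one irreducible components.

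Next I claim that the permutation of all irreducible components of $M_b$, including those of higher multiplicity, is determined by the restriction $g|_{M_b}$. The irreducible components are the closures of the connected components of the manifold locus of the topological space $M_b$ with the finitely many singular points removed. A homeomorphism of $M_b$ therefore induces a well-defined permutation of them.

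Now take $t\mapsto f_t|_{M_b}$. For a fixed component $C_i$, pick a point $x$ in the open part $C_i^\circ$ of $C_i$ away from the intersections with other components. The path $t\mapsto f_t(x)$ is continuous in $M_b$. For each $t$, $f_t(x)$ lies in the open part of some component, since $f_t$ is a homeomorphism of $M_b$ and so sends non-singular points to non-singular points. The sets $C_j^\circ$ are open in the nonsingular part of $M_b$ and pairwise disjoint, and the path never meets the singular points. Hence the index $j(t)$ with $f_t(x)\in C_j^\circ$ is constant. Since $j(0)=i$, each $f_t$ maps $C_i$ to itself. Doing this for every component of every singular fiber (finitely many fibers) shows that $f_t\in\trans_0(\pi)$ for all $t$.

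The main obstacle is the technical point that a homeomorphism of the singular fiber maps singular points to singular points and permutes irreducible components. I would handle it using local topology: a point of $M_b$ lying on two or more components, or a cusp or tacnode point of types $II$, $III$, $IV$, has a neighborhood that is not homeomorphic to $\R^2$ (it is a wedge of disks, or, for $II$, detected by the link). Alternatively, one can avoid the cusp issue by noting that every $f_t$ is a diffeomorphism of $M$ preserving $M_b$, so it preserves the Zariski-singular set, which is described differentially. Kodaira's classification (\Cref{thm: Kodaira classification sing fibers}) makes this a finite check.
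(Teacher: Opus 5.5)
Your proposal is correct and follows essentially the same route as the paper. Both arguments note that every map in the isotopy preserves each fiber and its set of intersection points, so it permutes the punctured components of each reducible fiber, and this discrete permutation must stay constant along the path. You track a single point $x\in C_i^\circ$ instead of the whole permutation, and you anchor at $f_0=\id_M$ rather than at $f\in\trans_0(\pi)$; both changes are harmless. Your concern about cusps is unnecessary, and the intrinsic claim is slightly off, since a type $II$ fiber is homeomorphic to $S^2$. Fibers of types $I_1$ and $II$ are irreducible, so only reducible fibers matter, and there every singular point is an intersection point of smooth components.
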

\begin{proof}
    The assumption that $f$ represents $[\id_M]\in \MW(\pi)$ means that there exists an isotopy $H\colon M\times [0,1]\to M$ from $f$ to $\id_M$, such that $\forall t\in [0,1], H(\cdot,t)\in \trans(\pi)$.
    In particular, $\pi\circ H=\pi$, so each singular fiber and its set of nodes are preserved.
    We show that for all $t\in[0,1]$, it must fix all irreducible components of fibers.
    
    Let $\eta\in B$ be a point such that $M_\eta\coloneqq \pi^{-1}(\eta)$ is reducible.
    Let $H_\eta\colon M_\eta \times [0,1]\to M_\eta$ be the restriction of $H$ to this fiber and denote $C_0^\times,C_1^\times,\dots,C_r^\times$ its irreducible components, with nodes removed.
    Any diffeomorphism that acts by translation in the fibers takes the set $R_\eta$ of nodes of $M_\eta$ to itself.
    Hence, $H_\eta$ restricts to an isotopy $H_\eta\colon M_\eta\setminus R_\eta\times [0,1]\to M_\eta\setminus R_\eta$. For each $t\in[0,1]$, $H_\eta^t\coloneqq H_\eta(\cdot,t)$ is a continuous map.
    Hence, $\forall t\in[0,1]$, $H_\eta^t$ permutes the connected components $C_0^\times,\dots,C_r^\times$, so there is a permutation $\varphi(t)\in S_{r+1}$. 
    Continuity of $H_\eta$ implies continuity of $\varphi\colon [0,1]\to S_{r+1}$. The target is discrete, so this map must be constant.

    Furthermore, $H^0=f\in \trans_0(\pi)$, so $H_\eta^0(C_i^\times)\subseteq C_i^\times$, meaning 
    $\varphi(0)$ is the identity permutation. Hence, $\varphi(t)$ is the identity permutation for all $t\in [0,1]$, so $H_\eta^t$ preserves the irreducible components of $M_\eta$. Since $\eta\in B$ was chosen arbitrarily, this completes the proof.
\end{proof}

\begin{lemma}\label{lem: narrow MW is fin index in MW}
    For any elliptic fibration $\pi\colon M\to B$ over a projective curve $B$, the group $\MW_0(\pi)$ is a finite-index subgroup of $\MW(\pi)$.
\end{lemma}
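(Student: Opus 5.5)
The plan is to build a homomorphism from $\MW(\pi)$ to a finite group whose kernel is exactly $\MW_0(\pi)$. Let $R\subset B$ be the finite set of critical values. For every $b\in R$, consider the component group $\pi_0(M_b^\#)$ from \Cref{thm: group on sing fibers}; by the table there it is finite. Form the finite group $G\coloneqq\prod_{b\in R}\pi_0(M_b^\#)$.

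Given $f\in\trans(\pi)$, take its section $\sigma_f=f\circ\sigma_0$. The image of any section avoids the singular points of fibers (it meets each fiber in a multiplicity-one component at a smooth point, as in \Cref{section: Kodaira classification}). So $\sigma_f(b)\in M_b^\#$ for every $b\in R$. We then define
\[
\Phi(f)\coloneqq \bigl([\sigma_f(b)]\bigr)_{b\in R}\in G.
\]

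\textbf{Step 1: $\Phi$ is a homomorphism.} By \Cref{lem: sections give diffeomorphisms} and its uniqueness statement, applied locally over a disk around each $b\in R$, we have $f=f_{\sigma_f}$ near $M_b$. Over that disk the composition of translations corresponds to addition of sections in the fiberwise group law of \Cref{thm: group on sing fibers}, so $\sigma_{f\circ g}=\sigma_f+\sigma_g$. Since $M_b^\#\to\pi_0(M_b^\#)$ is a group homomorphism, $\Phi(f\circ g)=\Phi(f)+\Phi(g)$.

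\textbf{Step 2: $\Phi$ descends to $\MW(\pi)$.} Take an isotopy $H$ within $\trans(\pi)$. Then $t\mapsto H^t(\sigma_0(b))$ is a continuous path inside $M_b^\#$, because $\trans(\pi)$ preserves fibers and the smooth locus. Its component class is therefore constant, so $\Phi$ induces a homomorphism $\bar\Phi\colon\MW(\pi)\to G$.

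\textbf{Step 3: the kernel is $\MW_0(\pi)$.} By \Cref{prop: MW0 is subgroup of MW}, $\MW_0(\pi)$ is a subgroup of $\MW(\pi)$. First, if $f\in\trans_0(\pi)$, then $\sigma_f(b)$ lies in the zero component $C_0$, so $\Phi(f)=0$. Conversely, suppose $\Phi(f)=0$. Then near each $b\in R$ we have $f=f_{\sigma_f}$, which is the holomorphic action $\alpha(s(\pi(x)),x)$ with $s(b)\in C_0^\#$. The identity component $C_0^\#$ is connected, and the action of $M_b^\#$ on $M_b$ is continuous. Hence the image of any irreducible component $C_i$ under $\alpha(s(b),\cdot)$ varies continuously along a path from $e(b)$ to $s(b)$, and must stay $C_i$. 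This is the discrete-permutation argument of \Cref{prop: MW0 is subgroup of MW}. So $f$ preserves every irreducible component and $f\in\trans_0(\pi)$. Thus $\ker\bar\Phi=\MW_0(\pi)$, and its index is at most $|G|<\infty$.

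The main obstacle is this converse in Step 3. It requires that the restriction of $f$ to $M_b$ is translation by the element $s(b)$ of the Néron model acting on the whole fiber, including the non-reduced components and the singular points. This is what the uniqueness in \Cref{lem: sections give diffeomorphisms} provides. With that identification, the connectedness of $C_0^\#$ finishes the argument.
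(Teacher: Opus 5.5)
Your proposal is correct and follows the same route as the paper. The paper also maps $\trans(\pi)$ to the finite group $\prod_{t\in R}\pi_0(M_t^\#)$ via $f\mapsto (f(\sigma_0(t)))_{t\in R}$, uses discreteness of the target to factor through $\MW(\pi)$, and identifies the kernel with $\MW_0(\pi)$. Your Steps 1 to 3 spell out details the paper leaves implicit: the homomorphism property via the uniqueness in \Cref{lem: sections give diffeomorphisms}, the connectedness-of-$C_0^\#$ argument showing that translation by an element of the identity component fixes every irreducible component, and the use of \Cref{prop: MW0 is subgroup of MW} to identify $\ker\bar\Phi$ with $\MW_0(\pi)$.
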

\begin{proof}
    Since $B$ is a closed curve, the set $R\subset B$ of critical values of $\pi$ is finite. 
    By \Cref{thm: group on sing fibers}, the group structure on the smooth locus $M_t^\#$ of the singular fiber $M_t$ is of the form $\mathbb{G}_t\times F_t$, where $\mathbb{G}_t=\C$ or $\C^\times$ and $F_t=\pi_0(M_t^\#)$ is the group of components of multiplicity $1$ of $M_t$. In particular, $F_t$ is finite.
    
    For $f\in\trans(\pi)$, $f\circ\sigma_0$ is a section of $\pi$. Thus, it must intersect every fiber with multiplicity $1$, so for any $t\in R$, $f(\sigma_0(t))\in M_t^\#$. Also, the map $\trans(\pi)\to M_t^\#$ is a group homomorphism, since any element of $\trans(\pi)$ acts on $M_t^\#$ by translation.
    Hence, there is a natural homomorphism
    \[
    \begin{tikzcd}
        \trans(\pi)\arrow[r]\arrow[rr, bend right=25, "\varphi"'] &\prod\limits_{t\in R} M_t^\# \arrow[r] &\prod\limits_{t\in R} F_t,
    \end{tikzcd}
    \]
    where the first map takes $f\in\trans(\pi)$ to $(f(\sigma_0(t)))_{t\in R}$ and the second is  projection $M_t^\#\to \pi_0(M_t^\#)=F_t$.
    The codomain is discrete, so $\varphi$ factors through a homomorphism $\Bar{\varphi}\colon \MW(\pi)\to \prod_{t\in R} F_t$.
    A translation $f\in \trans(\pi)$ that takes $\sigma_0(t)$ to the identity element in $F_t$ fixes all irreducible components of $M_t$. Thus, $\ker(\varphi)=\trans_0(\pi)$, which implies $\MW_0(\pi)= \ker(\Bar{\varphi})$. Thus, $\MW(\pi)/\MW_0(\pi)\cong \mathrm{im}(\Bar{\varphi})$. Since $\prod_{t\in R} F_t$ is finite, $\mathrm{im}(\Bar{\varphi})$ is finite, yielding the conclusion.
\end{proof}

\subsection{Cohomological Characterization of the Narrow Mordell-Weil Group}
\label{subsection: cohomological characterization}
Using the same technique as \cite[Proposition 3.3]{FarbLoojenga_MordellWeil}, we give a cohomological characterization of the narrow Mordell-Weil group.
For $b\in B$ and a neighborhood $U\subset B$ containing no critical values in $U-\{b\}$,
let $\trans_0(M_b)$ be the group of diffeomorphisms of $M_b$ that extend to diffeomorphisms of $\pi^{-1}(U)$ that act by translation on the smooth fibers in $U$ and preserve all irreducible components of all fibers.
Denote by $\trans_0^c(M_b)$ the maximal compact subgroup of $\trans_0(M_b)$. Similarly to the discussion in \cite{FarbLoojenga_MordellWeil}, we express these local translation groups in terms of cohomology. In the smooth and nodal cases we recall their argument for completeness.

\begin{lemma}\label{lem: cohomological char for fiber}
    Let $\pi\colon M\to B$ be an elliptic fibration and $b\in B$. For any fiber $M_b$, there is a natural isomorphism
    \[
        \trans_0^c(M_b)\cong H^1(M_b;\R)/H^1(M_b;\Z).
    \]
\end{lemma}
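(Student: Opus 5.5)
I will prove the lemma by identifying $\trans_0(M_b)$ with a group of sections, and then analysing its maximal compact subgroup case by case along Kodaira's classification (\Cref{thm: Kodaira classification sing fibers}).

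\textbf{Reduction to sections.} By \Cref{lem: sections give diffeomorphisms}, an element of $\trans_0(M_b)$ is determined by where it sends $\sigma_0(b)$. The irreducible components are preserved, so this point lies in $C_0^\#$, the smooth part of the zero-component. Conversely, every point of $C_0^\#$ extends to a local section whose translation preserves every component. This gives an isomorphism of Lie groups $\trans_0(M_b)\cong C_0^\#$, using the group structure of \Cref{thm: group on sing fibers}. Hence $\trans_0^c(M_b)$ is the maximal compact subgroup of $C_0^\#$. I then compare this with $H^1(M_b;\R)/H^1(M_b;\Z)$ in three cases.

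\textbf{Smooth fibers.} Here $M_b$ is an elliptic curve $E$, and translations give $E\cong H_1(E;\R)/H_1(E;\Z)$. Poincaré duality on $E$ identifies this naturally with $H^1(E;\R)/H^1(E;\Z)$. This is the Farb--Looijenga argument.

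\textbf{Multiplicative fibers.} For type $I_n$ we have $C_0^\#\cong\C^*$, whose maximal compact subgroup is $S^1\cong \R/\Z$. The fiber $M_b$ is homotopy equivalent to a wedge of a cycle of $n$ spheres, so $H^1(M_b;\Z)\cong\Z$ and $H^1(M_b;\R)/H^1(M_b;\Z)\cong S^1$. For naturality I will identify the circle in $\C^*$ with the vanishing-cycle direction. Concretely, the loop $S^1\subset C_0^\#$, pushed around the cycle of components, represents the generator of $H_1(M_b)\cong\Z$. This gives $S^1\cong H_1(M_b;\R)/H_1(M_b;\Z)$, and duality between $H_1$ and $H^1$ (both rank one) finishes this case.

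\textbf{Additive fibers.} Here $C_0^\#\cong\C$, whose maximal compact subgroup is trivial. Additive fibers are simply connected, so $H^1(M_b)=0$ and both sides vanish.

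I expect the main obstacle to be the naturality in the multiplicative case. One must check that the identification $\mathrm{U}(1)\subset C_0^\#$ with $H_1(M_b;\R)/H_1(M_b;\Z)$ is canonical, with no dependence on orientation choices beyond the complex orientation. It must also be compatible with the smooth nearby fibers, where the compact group of translations $T^2$ degenerates: the monodromy-invariant circle of $H_1$ of a nearby fiber survives as the circle $S^1\subset\C^*$. I would handle this with the local model of an $I_n$ degeneration given by a Tate curve $\C^*/q^\Z$. There, translation by $e^{2\pi i\theta}$ for real $\theta$ is visibly rotation along the invariant cycle, which is exactly the class surviving in $H_1(M_b)$.
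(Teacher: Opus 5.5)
Your reduction $\trans_0(M_b)\cong C_0^\#$ is fine, and so are the smooth and additive cases; they agree with the paper. The gap is in the multiplicative case, exactly at the step you flagged as the main obstacle. The claim that the loop $S^1\subset C_0^\#$ represents a generator of $H_1(M_b)\cong\Z$ is false. Take $C_0\cong\p^1$ with nodes at $p_0=0$ and $p_1=\infty$. The orbit $\{|z|=1\}$ bounds the disc $\{|z|\le 1\}\subset C_0\subset M_b$, so the map $H_1(C_0^\#)\to H_1(M_b)$ induced by inclusion is zero. No ``pushing around the cycle of components'' changes that homology class. The generator of $H_1(M_b)$ is traced instead by the non-compact $\R_{>0}$-direction: a ray from $p_0$ to $p_1$, closed up through $C_1,\dots,C_r$.

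Your Tate-curve check makes the same inversion. Translation by $e^{2\pi i\theta}$ moves along $|z|=\mathrm{const}$, which is the vanishing cycle $\delta$. This class is monodromy invariant, but it is precisely the class killed by $H_1(E)\to H_1(M_b)\cong H_1(E)/\Z\delta$, not the one that survives. So your mechanism produces the trivial map $S^1\to H_1(M_b;\R)/H_1(M_b;\Z)$, and compatibility with nearby fibers cannot be verified that way. Since both sides are circles, some abstract isomorphism exists for free. What the lemma needs, and what its later use in gluing $\mathcal{T}_0^c(\pi)$ into $\mathcal{T}_0(\pi)$ needs, is the canonical one.

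The missing idea is to reach $H^1(M_b)$ by duality rather than by pushforward. This is the paper's argument:
\begin{itemize}
\item Identify the compact part of $C_0^\#$ with $H_1(C_0^\#;\R)/H_1(C_0^\#;\Z)$ via the universal cover.
\item Apply Alexander duality in $C_0\cong S^2$: $H_1(S^2-\{p_0,p_1\})\cong\tilde H^0(\{p_0,p_1\})\cong H^1(S^2,\{p_0,p_1\})$.
\item Use excision and the long exact sequence of $(M_b,C_1\cup\dots\cup C_r)$, or of $(M_b,\{q\})$ when $r=0$, to land in $H^1(M_b)$.
\end{itemize}
Concretely, the orbit class goes to the functional ``intersection number with the cycle through the nodes.'' This is also what matches the smooth case. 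On a nearby fiber $E$, Poincar\'e duality sends $\delta$ to $\delta\cdot(-)$. This class vanishes on $\delta$, so it lies in, and in fact generates, the image of the injective restriction $H^1(M_b)\to H^1(E)$, namely the monodromy invariants. The compact translations $\R\delta/\Z\delta$ on $E$ therefore correspond to $H^1(M_b;\R)/H^1(M_b;\Z)$ through cohomology, not through $H_1(M_b)$.
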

\begin{proof}
    We distinguish cases based on whether $M_b$ is smooth, multiplicative or additive. 
    \begin{enumerate}[(1)]
        \item \textit{$M_b$ is smooth:} The universal cover of the torus $M_b$ is an affine space over $H_1(M_b;\R)$. The deck group of $M_b$ is naturally isomorphic to $H_1(M_b;\Z)$. Thus, there is a natural isomorphism
        \[
            \trans_0(M_b)\cong H_1(M_b;\R)/H_1(M_b;\Z)\cong H^1(M_b;\R)/H^1(M_b;\Z),
        \]
        where the second isomorphism is given by Poincaré duality. This group is compact, so it is also isomorphic to $\trans_0^c(M_b)$.

        \item \textit{$M_b$ is multiplicative:} 
        Let $C_0,\dots,C_r$ be the irreducible components of $M_b$, where $\sigma_0$ intersects $C_0$. The smooth locus $C_0^\#$ can be identified with $S^2-\{p_0,p_1\}$, for some $p_0\ne p_1$. Let 
        \[
        \trans(C_0^\#)\coloneqq \{f\in\diff(C_0^\#)\mid f \text{ extends to a diffeomorphism in } \trans_0(M_b)\}.
        \]
        The assumption that elements of $\trans_0(M_b)$ leave irreducible components invariant implies that any $f\in \trans_0(M_b)$ restricts to an element of $\trans(C_0^\#)$.
        Thus, there is a natural isomorphism 
        \begin{equation}\label{eq: trans_0 multipl 2}
            \trans_0(M_b)\cong \trans(C_0^\#).
        \end{equation}
        The universal cover of $C_0^\#$ is a torsor for $H_1(C_0^\#;\C)$.
        As in the previous case, the deck group of $C_0^\#$ is naturally isomorphic to $H_1(C_0^\#;\Z)$.
        Therefore,
        \begin{equation}\label{eq: trans_0 multiplicative}
            \trans(C_0^\#)\cong H_1(C_0^\#;\C)/H_1(C_0^\#;\Z).
        \end{equation}
        Let $R$ stand for one of the rings $\Z,\R,\C$.
        Alexander duality and the long exact sequence of the pair $(S^2,\{p_0,p_1\})$ imply 
        \begin{equation}\label{eq: H_1 C_0}
            H_1(C_0^\#;R)=H_1(S^2-\{p_0,p_1\};R)\cong \tilde H^0(\{p_0,p_1\};R) \cong H^1(S^2,\{p_0,p_1\};R).
        \end{equation}

        If $r=0$, let $q\in M_b$ be the node. Then $M_b-\{q\}\cong S^2-\{p_0,p_1\}$, so
        \[
            H^1(S^2,\{p_0,p_1\};R)\cong H^1(M_b,\{q\};R)\cong H^1(M_b;R).
        \]

        If $r>0$, excision and the long exact sequence of the pair $(M_b,C_1\cup\dots\cup C_{r})$, using that $C_1\cup\dots\cup C_{r}$ is simply-connected, give natural isomorphisms
        \[
            H^1(S^2,\{p_0,p_1\};R)\cong H^1(M_b,C_1\cup\dots\cup C_{r};R)\cong H^1(M_b;R).
        \]
        Hence, \eqref{eq: trans_0 multipl 2}, \eqref{eq: trans_0 multiplicative} and \eqref{eq: H_1 C_0} imply
        \[
            \trans_0(M_b)\cong H^1(M_b;\C)/H^1(M_b;\Z).
        \]

        The maximal compact subgroup of $H^1(M_b;\C)/H^1(M_b;\Z)$ is $H^1(M_b;\R)/H^1(M_b;\Z)$, yielding the conclusion.

        \item \textit{$M_b$ is additive:}  $\trans_0(M_b)\cong \C$ and its maximal compact subgroup is $\{0\}$. Since additive fibers are simply-connected, $H^1(M_b;R)$ is trivial and we obtain the desired identifications.   
    \end{enumerate}
\end{proof}

Let $\mathcal{T}(\pi)$ be the sheaf of smooth, fiberwise-translations on $B$: to every open set $U\subset B$ assign the group of diffeomorphisms of $\pi^{-1}(U)$, acting by translation on each smooth fiber. 
Denote by $\mathcal{T}_0(\pi)$ the subsheaf of $\mathcal{T}(\pi)$, that assigns to each open set $U\subset B$ the subgroup of $\mathcal{T}(\pi)(U)$ consisting of diffeomorphisms of $\pi^{-1}(U)$ that preserve each irreducible component of each singular fiber over $U$.
The group of global sections of $\mathcal{T}_0(\pi)$ is $\trans_0(\pi)$.
Let $\mathcal{E}_B$ be the sheaf of real-valued smooth functions on $B$.
Define the sheaf of translations belonging to a maximal compact subgroup by 
\[
    \mathcal{T}_0^c(\pi)\coloneqq (\mathcal{E}_B\otimes R^1\pi_*\Z)/R^1\pi_*\Z.
\]
We show that this can be viewed as a subsheaf of $\mathcal{T}_0(\pi)$ and determine how these two sheaves relate to each other.
\begin{lemma} \label{lem: T_0^c}
    There is a natural injective morphism of sheaves $\mathcal{T}_0^c(\pi) \xhookrightarrow{}\mathcal{T}_0(\pi)$. Furthermore, the cokernel of this morphism $\mathcal{T}_0(\pi)/\mathcal{T}_0^c(\pi)$ is supported on the critical values of $\pi$ and its non-zero stalks are vector spaces.
\end{lemma}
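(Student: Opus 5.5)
The plan is to build the morphism first on the smooth locus $B-R$ and then extend it across each critical value using the local models from \Cref{lem: cohomological char for fiber}.

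\emph{Over the smooth locus.} Take an open set $U\subset B-R$. Over $U$ the bundle of tori $\pi^{-1}(U)\to U$ has the following description: a fiberwise translation is the same thing as a smooth section of the bundle of groups $\mathcal{E}\otimes R^1\pi_*\Z/R^1\pi_*\Z$. This follows from Case (1) of \Cref{lem: cohomological char for fiber}, which gives $\trans_0(M_b)\cong H^1(M_b;\R)/H^1(M_b;\Z)$ naturally in $b$. So on $B-R$ the morphism $\mathcal{T}_0^c(\pi)\to \mathcal{T}_0(\pi)$ is an isomorphism. Concretely, a local section $s$ of $\mathcal{E}_B\otimes R^1\pi_*\Z$ is a smoothly varying real cohomology class. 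By Poincaré duality it is a smoothly varying element of $H_1(M_b;\R)$, which acts on $M_b$ by translation through the exponential map of the torus. The image is a diffeomorphism, and the kernel of this action is exactly $R^1\pi_*\Z$.

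\emph{Across a critical value.} Let $b\in R$ and let $U$ be a small disk around $b$. Then $R^1\pi_*\Z(U)\cong H^1(\pi^{-1}(U))\cong H^1(M_b)$, by retraction onto the central fiber. This group equals the invariants of the local monodromy: rank $1$ for multiplicative fibers and $0$ for additive ones. Such a section is a locally constant class extended by smooth real functions. In the additive case, the stalk of $\mathcal{E}_B\otimes R^1\pi_*\Z$ at $b$ is $0$, so there is nothing to extend. In the multiplicative case, the invariant class $\gamma$ (the vanishing cycle, which is Poincaré dual to the generator of $H^1(M_b)$) gives an $S^1$-action. This action is the fiberwise circle action $e^{2\pi i\theta}$ inside the $\C^*$-factor of the fiber system of groups from \Cref{thm: group on sing fibers}. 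Since that action is holomorphic on $M^\#$ and extends to all of $\pi^{-1}(U)$, as in the proof of \Cref{lem: sections give diffeomorphisms}, a smooth function $\theta(t)$ gives a diffeomorphism $x\mapsto \theta(\pi(x))\cdot x$. It lies in the identity component of $C_0^\#$, so it preserves all components. This gives the morphism on stalks. Compatibility with restriction follows by naturality, so the local morphisms glue.

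\emph{Injectivity.} The morphism is injective on $B-R$ by the first step. At $b$, a section mapping to the identity acts trivially on nearby smooth fibers, so it lies in $R^1\pi_*\Z$ away from $b$. Since sections are continuous and $R^1\pi_*\Z$ is discrete there, it lies in $R^1\pi_*\Z$ at $b$ as well.

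\emph{Cokernel.} The cokernel vanishes on $B-R$ by the first step. For $b\in R$, I would compute the stalk of the cokernel by the evaluation map at $\sigma_0$, $f\mapsto f(\sigma_0(\cdot))-\sigma_0$. This identifies germs in $\mathcal{T}_0(\pi)_b$ with germs of smooth sections of $M^\#$ landing in the identity component $C_0^\#$ at $b$. Pass to the Lie algebra: the fiber system is locally $\mathrm{Lie}/\Lambda$ with $\Lambda$ the local system of periods, and translations in the identity component lift to germs of smooth sections of the Lie-algebra bundle (one complex dimension), because $U$ is contractible. The image of $\mathcal{T}_0^c(\pi)_b$ corresponds to the sub-bundle of real periods. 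Hence the quotient is a quotient of a real vector space of germs of smooth functions by a subspace, and so it is a vector space.

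The main obstacle is the lifting in the cokernel step. It requires that every germ of a component-preserving translation near a singular fiber be the exponential of a smooth section of the relative Lie algebra. Equivalently, $\mathcal{T}_0(\pi)_b$ must be divisible with an $\R$-linear structure compatible with the image of $\mathcal{T}_0^c(\pi)_b$. I would first try to prove this using the holomorphic exponential map of the Néron model $M^\#\to U$ (Lie algebra bundle $\cong \C\times U$, kernel the period lattice), and check that this exponential is surjective onto the identity component with the kernel described above, including over $b$.
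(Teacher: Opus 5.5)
Your proposal is correct and follows the paper's proof: an isomorphism over $B-R$ via Poincar\'e duality, extension across multiplicative fibers by the circle action (the paper's action $\rho$), zero stalks at additive fibers, and cokernel stalks given by germs of smooth $\R$-valued (multiplicative) resp.\ $\C$-valued (additive) functions. The paper asserts this last step directly from \Cref{lem: cohomological char for fiber}, whereas you make it precise through the exponential of the N\'eron model. The lifting step you worry about does hold at the level of germs, but not because $U$ is contractible, since $\exp$ is not a covering over $U$ (its kernel drops from rank $2$ to rank $1$ or $0$ at $b$); instead, lift the value at $b$ using surjectivity of $\exp$ onto $C_0^\#$, subtract this constant lift, and lift the remainder near $b$ using that $\exp$ is a local biholomorphism near the zero section.
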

\begin{proof}
    For $U\subset B$ containing no critical values of $\pi$, Poincaré duality yields a natural isomorphism
    \[
        \iota\colon \mathcal{T}_0^c(\pi)(U)\to \mathcal{T}_0(\pi)(U).
    \]
    We show that $\iota$ can be extended to a morphism of sheaves $\mathcal{T}_0^c(\pi)\to \mathcal{T}_0(\pi)$.
    Let $b\in B$ be a critical value.
    \begin{enumerate}[(1)]
        \item If $M_b$ is multiplicative, there exists a small neighborhood $U$ such that the explicit degeneration of $M_b$ gives an action 
        \[
            \rho\colon \pi^{-1}(U)\times (H^1(M_b;\R)/H^1(M_b;\Z))\to \pi^{-1}(U)
        \]
        by component-preserving fiber translations.
        Hence, for any smooth function $a\colon U \to H^1(M_b;\R)/H^1(M_b;\Z)$, there is a translation $F_a$ of $\pi^{-1}(U)$, defined by $F_a(x)=\rho(x,a(\pi(x)))$.

        \item If $M_b$ is additive, let $s\in \mathcal{T}_0^c(U)$. The stalk $\mathcal{T}_0^c(\pi)_b$ is $\{0\}$, since $M_b$ is simply-connected, so $H^1(M_b;\R)\cong 0$. 
        Thus, there exists $V\subseteq U$ such that $s|_V=0$ and we can define $\iota(s|_V)=0$. This agrees with the construction on the smooth locus.
    \end{enumerate}
    These constructions agree on the smooth locus, so they yield a morphism of sheaves $\iota\colon\mathcal{T}_0^c(\pi)\to \mathcal{T}_0(\pi)$. Injectivity of $\iota$ follows immediately from injectivity at the level of stalks.

    In the smooth case, the inclusion $\iota\colon \mathcal{T}_0^c(\pi)\xhookrightarrow{} \mathcal{T}_0(\pi)$ is an isomorphism, so the stalk at any regular value of $\pi$ is trivial.
    Let $C_{B,b}^\infty (R)$ denote the group of germs at $b$ of smooth functions $B\to R$.
    At a critical value $b\in B$ the discussion above and \Cref{lem: cohomological char for fiber} imply
    \[
        (\mathcal{T}_0(\pi)/\mathcal{T}_0^c(\pi))_b=
        \begin{cases}
            C^\infty_{B,b}(\R) \text{, if } b \text{ is multiplicative,} \\
            C^\infty_{B,b}(\C) \text{, if } b \text{ is additive.}
        \end{cases}
    \]
    
\end{proof}

\begin{proposition}[Cohomological characterization] \label{prop: MW0=H1} Consider an elliptic fibration $\pi\colon M\to B$ with finitely many critical values\footnote{This is automatically satisfied if $B$ is a projective curve.}.
There exists a natural isomorphism
    \[
        \delta\colon \MW_0(\pi)\xrightarrow[]{\cong} H^1(B,R^1\pi_*\Z)
    \]
induced by the boundary map in the long exact sequence in cohomology associated to the short exact sequence of sheaves
    \begin{equation} \label{eq: SES sheaves in coh char}
        0\to R^1\pi_*\Z \to \mathcal{E}_B\otimes_\Z R^1\pi_*\Z \to \mathcal{T}_0^c(\pi)\to 0.
    \end{equation}
\end{proposition}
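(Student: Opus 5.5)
The argument follows Farb--Looijenga \cite[Proposition 3.3]{FarbLoojenga_MordellWeil}. The boundary map is combined with the comparison between $\mathcal{T}_0^c(\pi)$ and $\mathcal{T}_0(\pi)$ from \Cref{lem: T_0^c}.

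\step First, take the long exact sequence of \eqref{eq: SES sheaves in coh char}. The sheaf $\mathcal{E}_B\otimes_\Z R^1\pi_*\Z$ is a module over the fine sheaf $\mathcal{E}_B$, hence fine, so its positive-degree cohomology vanishes. This gives an exact sequence
\[
H^0(B,\mathcal{E}_B\otimes R^1\pi_*\Z)\to H^0(B,\mathcal{T}_0^c(\pi))\xrightarrow{\delta} H^1(B,R^1\pi_*\Z)\to 0.
\]
Hence $H^1(B,R^1\pi_*\Z)\cong \pi_0$ of $H^0(B,\mathcal{T}_0^c(\pi))$. The image of $H^0(B,\mathcal{E}_B\otimes R^1\pi_*\Z)$ is a path-connected subgroup, via the straight-line homotopy $ts$. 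Conversely, a path in $H^0(B,\mathcal{T}_0^c)$ from $0$ lifts to the covering $\mathcal{E}_B\otimes R^1\pi_*\Z\to\mathcal{T}_0^c$, which has discrete fibers $R^1\pi_*\Z$. So the image is exactly the identity path-component, and $H^0(B,\mathcal{T}_0^c(\pi))$ modulo this image equals its $\pi_0$.

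\step Next, show that the inclusion $H^0(B,\mathcal{T}_0^c(\pi))\hookrightarrow H^0(B,\mathcal{T}_0(\pi))=\trans_0(\pi)$ induces a bijection on $\pi_0$.

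For surjectivity, let $f\in\trans_0(\pi)$. Near each critical value $b$, the quotient stalk $(\mathcal{T}_0/\mathcal{T}_0^c)_b$ is a vector space of germs of smooth $\R$- or $\C$-valued functions by \Cref{lem: T_0^c}. Concretely, near a multiplicative fiber, $\trans_0(M_b)\cong H^1(M_b;\C)/H^1(M_b;\Z)$ splits as the compact torus times $\R$, with the noncompact $\R$ factor in the imaginary direction. Near an additive fiber, $C_0^\#\cong\C$. In both cases I deform $f$ on a small disk $U_b$ by scaling the noncompact part by $(1-t\phi)$, where $\phi$ is a bump function equal to $1$ near $b$ and supported in $U_b$. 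The homotopy stays inside $\mathcal{T}_0(\pi)$ because the noncompact factor is a vector group, so it splits off the compact part canonically. The endpoint lies in $\mathcal{T}_0^c$ near $b$, and it lies in $\mathcal{T}_0^c$ away from $R$ automatically. Since $R$ is finite, these deformations can be performed independently at each critical value.

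For injectivity, apply the same procedure parametrically to a path in $\trans_0(\pi)$ between two elements of $H^0(B,\mathcal{T}_0^c)$. This requires the retraction to fix sections that already lie in $\mathcal{T}_0^c$, which holds because they have zero noncompact component.

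\step Finally, compose the two bijections to get the bijection $\delta\colon\MW_0(\pi)=\pi_0(\trans_0(\pi))\to H^1(B,R^1\pi_*\Z)$. It is a homomorphism because all the maps involved are group homomorphisms of abelian sheaves. Naturality is immediate from the construction.

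The main obstacle is Step 2. There I must make the deformation retraction of $\mathcal{T}_0(\pi)$ onto $\mathcal{T}_0^c(\pi)$ near singular fibers rigorous. This requires a smooth, canonical splitting of the local translation group into its maximal compact part and a vector-group part, uniformly over a neighborhood of $b$. I would use the local holomorphic Néron-model structure from \Cref{thm: group on sing fibers} and \Cref{lem: sections give diffeomorphisms}. In those terms the splitting comes from taking the real and imaginary parts in $H^1(\,\cdot\,;\C)$ via the exponential sequence. The delicate part is checking that this splitting is smooth across the singular fiber itself, not just over the punctured disk.
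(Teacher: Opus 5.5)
Your proposal is correct and is essentially the paper's argument: the fine-sheaf long exact sequence gives $\pi_0 H^0(B,\mathcal{T}_0^c(\pi))\cong H^1(B,R^1\pi_*\Z)$, and \Cref{lem: T_0^c} plus cut-off functions at the finitely many critical values make $H^0(B,\mathcal{T}_0^c(\pi))\hookrightarrow\trans_0(\pi)$ a bijection on $\pi_0$; the paper phrases this as a product splitting $H^0(B,\mathcal{T}_0)\cong H^0(B,\mathcal{T}_0^c)\times H^0(B,\mathcal{T}_0/\mathcal{T}_0^c)$ with contractible second factor, where you use an explicit deformation.

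Two small fixes are needed. First, a global section of $\mathcal{T}_0^c(\pi)$ has zero noncompact part only on some neighborhood of $b$, not on all of $U_b$, so your deformation does not fix it; for injectivity, either shrink $\operatorname{supp}\phi$ into the region where both endpoints already lie in $\mathcal{T}_0^c(\pi)$, or note that every stage of the deformation maps $H^0(B,\mathcal{T}_0^c(\pi))$ into itself. Second, at an additive fiber there is no real/imaginary splitting (since $H^1(M_b;\C)=0$), so what you scale by $1-t\phi$ is the local logarithm of the section under the fiberwise exponential map of the N\'eron model, which is bijective onto $C_0^\#$ at $b$.
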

\begin{proof}
    The short exact sequence of sheaves \eqref{eq: SES sheaves in coh char} follows from the definition of $\mathcal{T}_0^c(\pi)$.
    Since $\mathcal{E}_B\otimes_\Z R^1\pi_*\Z$ is a fine sheaf of $\mathcal{E}_B$-modules, $H^q(B,\mathcal{E}_B\otimes_\Z R^1\pi_*\Z)=0$ for any $q>0$.
    Thus, the long exact sequence in cohomology associated to \eqref{eq: SES sheaves in coh char} gives the following exact sequence of topological groups
    \[
        0\to H^0(B,R^1\pi_*\Z)\to H^0(B,\mathcal{E}_B\otimes_\Z R^1\pi_*\Z) \to H^0(B,\mathcal{T}_0^c(\pi)) \xrightarrow[]{\delta}H^1(B,R^1\pi_*\Z)\to 0.
    \]
    The space $H^0(B,\mathcal{E}_B\otimes_\Z R^1\pi_*\Z)$ is connected, since it is a vector space and $H^1(B,R^1\pi_*\Z)$ is discrete, so exactness gives an identification 
    \[
        \pi_0(H^0(B,\mathcal{T}_0^c(\pi))) \cong H^1(B,R^1\pi_*\Z).
    \]
    
    By \Cref{lem: T_0^c}, the quotient $\mathcal{T}_0(\pi)/\mathcal{T}_0^c(\pi)$ has support contained in the singular values of $\pi$ and the stalks are contractible. 
    Hence, $H^0(B,\mathcal{T}_0/\mathcal{T}_0^c)$ is contractible. Using disjoint neighborhoods of the critical values and smooth cut-off functions, 
    there is a continuous section of    
    \[
        H^0(B,\mathcal{T}_0)\to H^0(B,\mathcal{T}_0/\mathcal{T}_0^c).
    \]
    Thus, there is a homeomorphism
    \[
        H^0(B,\mathcal{T}_0)\cong H^0(B,\mathcal{T}_0^c)\times H^0(B,\mathcal{T}_0/\mathcal{T}_0^c).
    \]
    Hence, the inclusion $\mathcal{T}_0^c\xhookrightarrow{} \mathcal{T}_0$ induces an isomorphism on the path component groups of $H^0(B,\mathcal{T}_0^c(\pi))$ and $H^0(B,\mathcal{T}_0(\pi))$.
    By definition, $\MW_0(\pi)\cong \pi_0(H^0(B,\mathcal{T}_0(\pi)))$, implying the conclusion.
\end{proof}

\section{Computation of the Narrow Mordell-Weil Group}\label{section: narrow MW}
In this section, we compute the Narrow Mordell-Weil group of an arbitrary elliptic fibration over a projective base. We describe $\MW_0(\pi)$ as a subquotient of $H^2(M)$ and prove \Cref{thm: MW0 as perp of triv}. Finally, we compute the rank of $\MW_0(\pi)$.

\subsection{A Description of the Narrow Mordell-Weil Group Using Annihilators}
\label{subsection: MW0 as annihilator}
We start with a lemma about Eichler transformations.
\begin{lemma}[Eichler transformations]\label{lem: eichler transformation}
    Let $L$ be a unimodular lattice, $e\in L$ a primitive isotropic vector and $a_1,\dots,a_n\in e^\perp$, such that the Gram matrix $(a_i\cdot a_j)_{i,j}$ is nondegenerate. Let
    \[
        K\coloneqq e^\perp \cap a_1^\perp\cap\dots\cap a_n^\perp.
    \]
    Let $\tau$ be an isometry of $L$ such that $\tau(e)=e$ and $\tau(a_i)=a_i, \forall i=1,\dots,n$.
    If $\tau$ acts trivially on the successive quotients of the filtration $0\subset \Z e\subset K\subset L$,
    then there exists a unique class $\bar v\in K/\Z e$ such that $\tau$ acts on $L$ as the Eichler transformation $E_{e,v}$ given by
    \[
        E_{e,v}(x)=x+(x\cdot e)v-(x\cdot v)e-\frac{1}{2}(v\cdot v)(x\cdot e)e,
    \]
    where $v\in K$ is a lift of $\bar v$ with $v\cdot v$ even.
\end{lemma}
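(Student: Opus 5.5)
Let $A\subset L$ be the span of $a_1,\dots,a_n$ and $W\coloneqq A^\perp$, so that $K=e^\perp\cap W$. The plan is to first split off $A$, then treat $\tau|_W$ as a unipotent isometry fixing an isotropic vector and read off $v$.

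\textbf{Step 1: split off $A$.} Since the Gram matrix of the $a_i$ is nondegenerate, $A\otimes\Q$ is a nondegenerate subspace, so $L\otimes\Q=(A\otimes\Q)\oplus(W\otimes\Q)$. The map $\tau$ fixes $A$ pointwise and is an isometry, so it preserves $W$. Also $e\in W$, because $e\cdot a_i=0$ for all $i$.

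\textbf{Step 2: define $\bar v$.} For $x\in L$ we have $\tau(x)-x\in K$, since $\tau$ acts trivially on $L/K$. In particular $\tau(x)-x\in e^\perp$. Moreover $\tau(x)\cdot e-x\cdot e=\tau(x)\cdot\tau(e)-x\cdot e=0$, so the functional $x\mapsto x\cdot e$ is $\tau$-invariant. Now consider the map $\varphi\colon L\to K/\Z e$, $x\mapsto \tau(x)-x \bmod \Z e$. It vanishes on $K$, because $\tau$ is trivial on $K/\Z e$. Hence it factors through $L/K$.

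Next we identify $L/K$. Since $L$ is unimodular and $e$ is primitive, there is $f\in L$ with $f\cdot e=1$. Then $x\mapsto x\cdot e$ maps $L/e^\perp$ isomorphically onto $\Z$.

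\textbf{The main obstacle} is the part $e^\perp/K$, which detects pairings with the $a_i$. It has to be shown that $\varphi$ also kills elements $y\in e^\perp$ that pair nontrivially with $A$. For such $y$, write $y=y_A+y_W$ rationally. Then $\tau(y)-y=\tau(y_W)-y_W\in W\otimes\Q$. We need this to lie in $\Z e$ modulo $K$. The argument I would try:
\begin{itemize}
\item $y_W\in e^\perp\cap W\otimes\Q=K\otimes\Q$, and $\tau$ is trivial on $(K/\Z e)\otimes\Q$.
\item So $\tau(y_W)-y_W\in\Q e$.
\item Since this element also lies in $L$ and $e$ is primitive, it lies in $\Z e$.
\end{itemize}
Hence $\varphi(x)$ depends only on $x\cdot e$. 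It is additive, so $\varphi(x)=(x\cdot e)\,\bar w$ for a unique $\bar w\in K/\Z e$. We set $\bar v\coloneqq\bar w$; uniqueness of $\bar v$ follows at the end because $v\mapsto E_{e,v}$ is injective modulo $\Z e$ (evaluate at $f$).

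\textbf{Step 3: fix the lift and the $e$-coefficient.} Pick a lift $v\in K$ with $v\cdot v$ even. This is possible: replacing $v$ by $v+ke$ does not change $v\cdot v$, so we need evenness of $v\cdot v$ for the given class. That will come from the isometry condition. Write
\[
\tau(x)=x+(x\cdot e)v+c(x)e
\]
for some integer-valued function $c$. Linearity of $\tau$ makes $c$ linear.

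Now impose $\tau(x)\cdot\tau(y)=x\cdot y$ for all $x,y$. Using $v\cdot e=0$ and $e\cdot e=0$, this gives
\[
(x\cdot e)(v\cdot y)+(y\cdot e)(v\cdot x)+(x\cdot e)(y\cdot e)(v\cdot v)+c(x)(y\cdot e)+c(y)(x\cdot e)=0 .
\]
Taking $y\in e^\perp$ gives $c(y)=-v\cdot y$ on $e^\perp$. Taking $x=y=f$ gives $2c(f)=-2(v\cdot f)-v\cdot v$. In particular $v\cdot v$ is even, since $c(f)$ and $v\cdot f$ are integers. Combining the two values, $c(x)=-(x\cdot v)-\tfrac12(v\cdot v)(x\cdot e)$.

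Finally, changing the lift $v\mapsto v+ke$ changes both sides consistently, so $\tau=E_{e,v}$. The delicate point remains Step 2's handling of the $A$-directions, where the primitivity of $e$ must be used to pass from $\Q e$ to $\Z e$.
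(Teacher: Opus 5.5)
Your proof is correct, but its key step takes a different route from the paper's.

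\textbf{The paper's route.} The paper sets $w(x)=\tau(x)-x$ for $x\in e^\perp$, which lies in $K$ by triviality on $L/K$. It uses the isometry identity $x\cdot w(y)+w(x)\cdot y+w(x)\cdot w(y)=0$ with $y\in K$ to get $w(x)\in K^\perp$. It then proves $K\cap K^\perp=\Z e$ from the nondegenerate Gram matrix and primitivity of $e$. Next it writes $w(x)=\varphi(x)e$ and uses unimodularity a second time to represent $\varphi$ by a vector $v$. It checks $v\in K$ via $\tau(e)=e$ and $\tau(a_i)=a_i$. Only afterwards does it determine $\tau(f)=f+v+\lambda e$ and $\lambda$ from the isometry condition.

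\textbf{Your route.} You obtain $\tau(y)-y\in\Z e$ on $e^\perp$ without using the isometry at all. The rational splitting $L_\Q=A_\Q\oplus A_\Q^\perp$, the fact that $\tau$ fixes $A_\Q$ pointwise, and triviality on $K_\Q/\Q e$ together place $\tau(y)-y$ in $L\cap\Q e=\Z e$. You then read $v$ off directly as a lift of $\tau(f)-f\in K$, so $v\in K$ is automatic. A single bilinear identity for the correction term $c$ then gives, all at once, $c(y)=-y\cdot v$ on $e^\perp$, the value of $c(f)$, and the evenness of $v\cdot v$. Uniqueness follows from $E_{e,v}(f)-f\equiv v \pmod{\Z e}$ rather than from $(e^\perp)^\perp=\Z e$.

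\textbf{Comparison.} Your route is more economical: unimodularity is used only to produce $f$, and no dual-vector argument is needed. The paper's route isolates the structural fact $K\cap K^\perp=\Z e$ and builds $v$ by lattice duality, in the style of the standard treatment of Eichler transformations.

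One minor point: your observation in Step 1 that $\tau$ preserves $W$, and hence that $\tau(y)-y\in W\otimes\Q$, is never actually used.
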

\begin{proof}
    Define a homomorphism $w\colon e^\perp\to K$, $w(x)=\tau(x)-x$. The map $w$ is well-defined, since $\tau$ acts trivially on $L/K$.
    Since $\tau$ is an isometry, $\tau(x)\cdot \tau (y)=x\cdot y$ for all $x,y\in e^\perp$. Substituting $\tau(x)=x+w(x)$ implies
    \begin{equation}\label{eq: w acting on x,y in e perp}
        x\cdot w(y)+w(x)\cdot y+w(x)\cdot w(y)=0.
    \end{equation}
    For any $y\in K\subset e^\perp$, the assumption that $\tau$ acts trivially on $K/\Z e$ implies that $w(y)\in \Z e$. Since ${x,w(x)\in e^\perp}$, \eqref{eq: w acting on x,y in e perp} implies that for any $x\in e^\perp$ and $y\in K$, $w(x)\cdot y=0$. Thus, $w(x)\in K^\perp$. Therefore, $w(x)\in K\cap K^\perp$. 
    
    The fact that $(a_i\cdot a_j)_{i,j}$ is nondegenerate implies that $K\cap K^\perp=\Z e$. Indeed, this implies that $a_1,\dots,a_n$ are linearly independent over $\Q$, so denoting their $\Q$-span by $V$ implies $V\cap V^\perp=0$. Let $K_\Q\coloneqq K\otimes_{\Z} \Q$. Then $K_{\Q}=(\Q e\oplus V)^\perp$ and $K_{\Q}^\perp=\Q e\oplus V$.
    Thus, $K_\Q\cap K_\Q^\perp= \Q e$. Since $e$ is primitive, $L\cap \Q e=\Z e$, so $K\cap K^\perp = \Z e$.
    Hence, we can restrict the codomain to $w\colon e^\perp \to \Z e$. Thus, there exists a homomorphism $\varphi\colon e^\perp\to \Z$ such that $w(x)=\varphi(x)e$.

    Since $e$ is a primitive vector in the unimodular lattice $L$, there exists $f\in L$ such that $e\cdot f=1$. The homomorphism $\varphi$ can be extended to $L=e^\perp \oplus \Z f$ by setting $\varphi(f)=0$. Since $L$ is unimodular, there exists a unique $v\in L$ such that $\varphi(x)=-x\cdot v,\forall x\in L$. Now, $\tau(e)=e$ implies $w(e)=0$, so $\varphi(e)=0$, which means $v\in e^\perp$. Similarly, $\tau(a_i)=a_i$ implies $v\in a_i^\perp$, $\forall i$ so $v\in K$.

    We need to determine the action of $\tau$ on $f$. By assumption, there exists $v'\in K$, such that $\tau(f)=f+v'$. For $x\in e^\perp$, the fact that $\tau$ is an isometry implies
    \[
        x\cdot f = \tau(x)\cdot \tau(f)=(x-(x\cdot v)e)\cdot (f+v').
    \]
    Expanding and using $e\cdot f=1$ and $e\cdot v'=0$ implies $x\cdot v'=x\cdot v$. Since $x\in e^\perp$ was arbitrary, it follows $v'-v\in (e^\perp)^\perp$ and since $e$ is primitive, $v'-v=\lambda e$ for $\lambda\in \Z$. Thus, $\tau(f) = f+v+\lambda e$. Using this in the identity $f\cdot f=\tau(f)\cdot \tau(f)$ implies $\lambda=-f\cdot v-\frac{1}{2} v\cdot v$. Note that $\lambda\in \Z$ implies that $v\cdot v$ is even. Hence, if $x\in e^\perp$ or $x\in \Z f$, $\tau(x)=x+(x\cdot e)v-(x\cdot v)e-\frac{1}{2}(v\cdot v)(x\cdot e)e$. Since $e^\perp$ and $f$ generate $L$, this formula holds for $x\in L$.

    If $E_{e,v}=E_{e,v'}$ for $v,v'\in K$, taking $x\in e^\perp$ implies $x\cdot v=x\cdot v'$, so $v-v'\in (e^\perp)^\perp$. Hence, primitivity of $e$ implies that $v-v'\in \Z e$. Thus, there is a unique class $\Bar{v}\in K/\Z e$ such that $\tau$ acts on $L$ by a lift of $\Bar{v}$.
\end{proof}

For a homology class $v\in H_2(M)$, let $H^2(M)^v$ denote the annihilator of $v$ under the evaluation pairing $H^2(M)\to \Z, \xi \mapsto \langle \xi, v \rangle$. By $v^\vee\in H^2(M)$, we denote the Poincaré dual of $v$ in $M$.
We use the convention that $f\in\diff(M)$ acts on $H^2(M)$ on the left by $(f^{-1})^*$.
The following proposition expresses $\MW_0(\pi)$ in terms of annihilators in $H^2(M)$. In the proof, we include a computation of the edge maps of the Leray spectral sequence of $\pi$, which we did not find in the literature.

\begin{proposition} \label{prop: narrow MW as annihilator}
Let $\pi\colon M\to B$ be an elliptic fibration with $B$ a projective curve of genus $g$. 
Denote by $[F]$ the fiber class of $\pi$ and let $[F]^\vee$ be its Poincaré dual. Let us label the critical values in $B$ by $t_1,\dots, t_n$ and let the irreducible components of the singular fiber $\pi^{-1}(t_i)$ be denoted $C_{i,0},\dots, C_{i,r_i}$, where $C_{i,0}$ is the component intersecting $\sigma_0$.
Then there is a natural isomorphism
\[
\begin{array}{c c l}
     \MW_0(\pi)&\xrightarrow[]{\cong} &
        \left(H^2(M)^{[F]}\cap \bigcap\limits_{i=1}^n \bigcap\limits_{j=1}^{r_i} H^2(M)^{[C_{i,j}]}\right) / \Z[F]^\vee, \\
     \tau&\mapsto & \Bar{c}_\tau=[\tau([\sigma_0]^\vee)-[\sigma_0]^\vee].
\end{array}
\]
Furthermore, $\tau$ acts on $H^2(M)$ by the Eichler transformation
\[
    E_{[F]^\vee,c_\tau}(x)=x+(x\cdot [F]^\vee)c_\tau - (x\cdot c_\tau) [F]^\vee - \frac{1}{2} (c_\tau\cdot c_\tau)(x\cdot [F]^\vee)[F]^\vee,
\]
where $c_\tau$ is a lift of $\Bar{c}_\tau$.
\end{proposition}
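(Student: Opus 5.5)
The plan is to combine the cohomological characterization (\Cref{prop: MW0=H1}) with the Leray spectral sequence of $\pi$, and then use \Cref{lem: eichler transformation} with $L=H^2(M)$, $e=[F]^\vee$ and the $a$'s equal to the duals $[C_{i,j}]^\vee$, $j\ge1$. Write
\[
K=H^2(M)^{[F]}\cap \bigcap_{i,j\ge1} H^2(M)^{[C_{i,j}]}=e^\perp\cap\bigcap a_{i,j}^\perp .
\]
Two preliminary facts need checking. First, $e=[F]^\vee$ is primitive and isotropic, since $[\sigma_0]\cdot[F]=1$ and $F\cdot F=0$. Second, each $a_{i,j}$ lies in $e^\perp$, and the Gram matrix $(C_{i,j}\cdot C_{k,l})$ is block diagonal with blocks the negative Cartan matrices $A$. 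These blocks have full rank by the discussion after \Cref{thm: Kodaira classification sing fibers}, so the Gram matrix is nondegenerate.

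\textbf{Step 1: $\tau$ satisfies the hypotheses of the lemma.} Let $\tau\in\trans_0(\pi)$. Since $\pi\circ\tau=\pi$ and $\tau$ is isotopic to the identity on each fiber, $\tau$ fixes $[F]$. Since $\tau$ preserves every $C_{i,j}$, it fixes each $[C_{i,j}]$. It remains to see that $\tau^*$ acts trivially on the graded pieces of $0\subset \Z e\subset K\subset L$. For this I will identify the filtration with the Leray filtration. The Leray spectral sequence of $\pi$ has $E_2^{2,0}=H^2(B;\pi_*\Z)=\Z$, spanned by $[F]^\vee$, and $E_2^{1,1}=H^1(B,R^1\pi_*\Z)$. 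The term $E_2^{0,2}=H^0(B,R^2\pi_*\Z)$ records the degrees on fiber components. The edge map $H^2(M)\to H^0(B,R^2\pi_*\Z)$ is the restriction to fibers, i.e.\ pairing with $[F]$ and with the $[C_{i,j}]$. So its kernel is $K$ modulo those components not seen by $a_{i,j}$; these are handled by the relation $F=\sum m_jC_{i,j}$ together with $C_{i,0}$. Hence $F^1H^2=K$ and $F^2H^2=\Z e$. The map $\tau$ is fiber-preserving and acts on each $R^q\pi_*\Z$ trivially, because translations are isotopic to the identity fiberwise (locally they are in the identity component of $\trans_0(M_b)$ by \Cref{lem: cohomological char for fiber}). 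It therefore acts trivially on $E_2$, hence on the graded pieces. The lemma then gives a unique $\bar v\in K/\Z e$ with $\tau=E_{e,v}$.

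\textbf{Step 2: identify $\bar v$ with $\bar c_\tau$ and build the map.} Apply the Eichler formula to $x=[\sigma_0]^\vee$, which satisfies $x\cdot e=1$:
\[
\tau(x)-x=v-(x\cdot v)e-\tfrac12(v\cdot v)e\equiv v \pmod{\Z e}.
\]
Hence $\bar c_\tau=\bar v\in K/\Z e$, and the map $\tau\mapsto\bar c_\tau$ is well defined. It depends only on the isotopy class, because the action on cohomology does. It is a homomorphism because $E_{e,v}\circ E_{e,w}=E_{e,v+w}$ modulo $\Z e$, a routine check.

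\textbf{Step 3: bijectivity, the main obstacle.} I will compose with $\delta$ from \Cref{prop: MW0=H1} and show that $\tau\mapsto\bar c_\tau$ equals $\delta$ followed by an isomorphism $H^1(B,R^1\pi_*\Z)\cong K/\Z e$ coming from the Leray sequence. Degeneration at $E_2$ is automatic in a two-row, two-column-length situation: $d_2\colon E_2^{0,1}\to E_2^{2,0}$ vanishes thanks to the section. This gives $E_\infty^{1,1}=F^1/F^2=K/\Z e$. The difficulty is checking compatibility, namely that the class $\tau^*[\sigma_0]^\vee-[\sigma_0]^\vee$ read in $E_\infty^{1,1}$ equals $\delta(\tau)$, possibly up to sign. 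For this I would work locally over a cover of $B$. Over each open set $U_\alpha$, write $\tau=\exp(s_\alpha)$ with $s_\alpha$ a section of $\mathcal E_B\otimes R^1\pi_*\Z$. Then $\delta(\tau)$ is the Čech cocycle $s_\alpha-s_\beta\in R^1\pi_*\Z$. Next, compute the difference cycle between $\sigma_\tau$ and $\sigma_0$ via the chain swept out by the local isotopies $\exp(ts_\alpha)$, and check that its Čech–Leray representative is exactly this cocycle; this is a direct analogue of the argument in \cite{FarbLoojenga_MordellWeil}. Once compatibility holds, bijectivity of $\delta$ and of the edge identification gives the isomorphism. Injectivity alone would also follow from the uniqueness clause of the lemma combined with faithfulness.
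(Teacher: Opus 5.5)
Your proposal is correct and follows essentially the paper's route. You identify $0\subset \Z[F]^\vee\subset K\subset H^2(M)$ with the Leray filtration via the two edge maps, use the trivial action on the sheaves $R^q\pi_*\Z$ to invoke the Eichler lemma, and read off $\bar c_\tau$ by evaluating at $[\sigma_0]^\vee$. Bijectivity then comes from the compatibility of $\delta$ with $E_\infty^{1,1}\cong K/\Z[F]^\vee$; the paper likewise only asserts this, and your Čech sketch is, if anything, more explicit.

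Two small corrections. First, the spectral sequence need not degenerate at $E_2$ over $\Z$: the differential $E_2^{0,2}\to E_2^{2,1}$ is nonzero, for example, when $\MWH(\pi)$ has torsion. You only need $E_\infty^{1,1}=E_2^{1,1}$, which is automatic. Second, your closing aside deriving injectivity from faithfulness is circular, since faithfulness of the $\MW_0(\pi)$-action on $H^2(M)$ is itself a consequence of this proposition.
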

\begin{proof}
    Consider the Leray spectral sequence 
    \[
        E_2^{p,q}\coloneqq H^p(B,R^q\pi_*\Z) \implies H^{p+q}(M).
    \]
    The only possible nonzero elements on the $E_2$ page are the ones below:
    
    \begin{center}
        \begin{tikzcd}[row sep=huge, column sep=huge]
        H^0(B, R^2\pi_*\mathbb{Z})\arrow[rrd, red, "A_2"] & H^1(B, R^2\pi_*\mathbb{Z}) & H^2(B, R^2\pi_*\mathbb{Z})\\
        H^0(B, R^1\pi_*\mathbb{Z})\arrow[rrd, red, "A_1"] & H^1(B, R^1\pi_*\mathbb{Z}) & H^2(B, R^1\pi_*\mathbb{Z})\\
        H^0(B, R^0\pi_*\mathbb{Z}) & H^1(B, R^0\pi_*\mathbb{Z}) & H^2(B, R^0\pi_*\mathbb{Z})
    \end{tikzcd}
    \end{center}

    Thus, the spectral sequence must degenerate at the latest on the $E_3$ page. The only possibly nonzero differentials on $E_2$ are
        \[
        A_1\colon H^0(B,R^1\pi_*\Z)\to H^2(B,R^0\pi_*\Z)
        \]
        and 
        \[
        A_2\colon H^0(B,R^2\pi_*\Z)\to H^2(B,R^1\pi_*\Z).        
        \]

    \step\textit{Existence of a section implies that $A_1\equiv0$.} 
    We first show that the edge map 
    \[
        \mathrm{edge}_1\colon H^2(B,R^0\pi_*\Z)\cong H^2(B)\to H^2(M)
    \]
    is the pullback of $\pi\colon M\to B$. Consider the trivial fibration $\id\colon B\to B$. The morphism of fibrations
    \begin{center}
        \begin{tikzcd}
        M \arrow[r,"\pi"]\arrow[d,"\pi"] & B \arrow[d,"\id_B"]\\
        B \arrow[r,"\id_B"] & B
    \end{tikzcd}
    \end{center}
    induces a morphism of Leray spectral sequences. By naturality of the edge map, there is a commutative diagram
    \begin{center}
        \begin{tikzcd}
        H^2(B,R^0\id_* \Z) \arrow[r,"\id"]\arrow[d,"\id"] & H^2(B,R^0\pi_* \Z) \arrow[d,"\mathrm{edge}_1"]\\
        H^2(B) \arrow[r,"\pi^*"] & H^2(M).
    \end{tikzcd}
    \end{center}

    Clearly, $R^0\id_* \Z$ is equal to the constant sheaf $\underline\Z$. The left vertical map is the edge map for the trivial fibration $\id\colon B\to B$, so it is the identity.
    Furthermore, all fibers of $\pi$ are connected (see Kodaira's classification), so $R^0\pi_* \Z=\underline\Z$. 
    The top horizontal map is induced by the isomorphism of sheaves $R^0\id_*\Z\to R^0\pi_*\Z$ and is therefore an isomorphism. 
    By naturality, the bottom map $H^2(B)\to H^2(M)$ is the pullback $\pi^*$, so $\mathrm{edge}_1=\pi^*$.
    
    The existence of a section $\sigma_0$ implies that $\pi\circ \sigma_0=\id_B$. Thus, the induced map $\pi^*\colon H^2(B)\to H^2(M)$ is injective. By definition of the edge map, $\ker(\mathrm{edge_1})=\mathrm{im}(A_1)$. Hence, $A_1=0$.

    \step \textit{The edge map $\mathrm{edge}_2\colon  H^2(M)\to H^0(B,R^2\pi_*\Z)$ is given by restriction:} \label{step: computation edge 2}
    \[
        \alpha \mapsto (\alpha|_{M_b})_{b\in B}.
    \]
    The fibration $f\colon M\to pt$ fits into the following commutative diagram:
   
    \begin{center}
        \begin{tikzcd}
        M \arrow[r,"\id_M"]\arrow[d,"\pi"] & M \arrow[d,"f"]\\
        B \arrow[r,"g"] & pt.
    \end{tikzcd}
    \end{center}
    Naturality of the Leray spectral sequence and of the edge map implies that there is a commutative diagram
    \begin{center}
        \begin{tikzcd}
        H^2(M) \arrow[r,"\id"]\arrow[d,"\mathrm{edge}_f"] & H^2(M) \arrow[d,"\mathrm{edge}_2"]\\
        H^0(pt,R^2f_*\Z) \arrow[r,"g^*"] & H^0(B,R^2\pi_*\Z).
    \end{tikzcd}
    \end{center}
    Since $R^2f_*\Z = H^2(M)$, there is a natural isomorphism $H^0(pt,R^2f_*\Z)\cong H^2(M)$. 
    The top horizontal map is the identity, since it is the pullback of the identity.
    The left vertical map is the edge map for the Leray filtration of $f\colon M\to pt$. The $E_2$ page of this Leray filtration only has nonzero elements for $p=0$, so the edge map $H^2(M)\to H^0(pt,R^2 f_*\Z)\cong H^2(M)$ is the identity. The bottom horizontal map corresponds to the map on global sections given by the sheaf morphism $g^{-1}R^2f_*\Z\to R^2\pi_*\Z$, induced by $\id_M$. On the stalk above $b\in B$, this map is given by restriction $H^2(M)\to H^2(M_b)$. Thus, the map on  global sections is also induced by restriction.
    Therefore, $\mathrm{edge}_2$ is given by restriction.
    
    \step \textit{Interpret Leray Filtration.} \label{step: interpret Leray}
    Let the Leray filtration of $H^2(M)$ be given by
    \[
        H^2(M)=L_2H^2\supset L_1H^2 \supset L_0H^2=\pi^* H^2(B) \supset 0.
    \]

    Then $L_1H^2 / L_0H^2 \cong H^1(B,R^1\pi_*\Z)$ and $L_2H^2 / L_1H^2 \cong \ker A_2=\mathrm{im}(\mathrm{edge}_2)$. By definition, there exists the following short exact sequence:

    \begin{equation}\label{eq: SES Leray SS}
        0 \xrightarrow[]{}L_1H^2 \xrightarrow[]{} H^2(M) \xrightarrow[]{\mathrm{edge}_2} \mathrm{im}(\mathrm{edge}_2)  \xrightarrow[]{}0.
    \end{equation}

    By Step \ref{step: computation edge 2}, $\mathrm{edge}_2\colon H^2(M)\to H^0(B,R^2\pi_*\Z)$ is given by restricting to each fiber.
    Hence, 
    \[
        L_1H^2=\ker(\mathrm{edge}_2)=\{\alpha\in H^2(M)\mid \alpha|_{[M_b]}=0, \forall b\in B\}.
    \]
    For a singular fiber $M_b$,
    $H_2(M_b)$ is freely generated by its irreducible components. Hence, $\alpha|_{M_b}$ is trivial if and only if it evaluates trivially on each irreducible component. Thus,
    \[
        L_1H^2=H^2(M)^{[F]}\cap \bigcap_{i,j} H^2(M)^{[C_{i,j}]}.  
    \]
    For each $i$, $\alpha|_{[F]}=0$ and $\alpha|_{[C_{i,j}]}=0$ for $j>0$ implies $\alpha|_{[C_{i,0}]}=0$, so the latter does not need to be explicitly included in the intersection.
    
    Furthermore, $L_0H^2=\pi^* H^2(B)$ is spanned by the Poincaré dual $[F]^\vee$ of the fiber class.
    Therefore by \Cref{prop: MW0=H1}, 
    \begin{equation*}\label{eq: formula for MW_0}
        \MW_0(\pi)\cong H^1(B, R^1\pi_*\Z) \cong L_1H^2 / L_0H^2 \cong 
        \left(H^2(M)^{[F]}\cap \bigcap_{i=1}^n \bigcap_{j=1}^{r_i} H^2(M)^{[C_{i,j}]}\right)  / \Z[F]^\vee.
    \end{equation*}

\step \textit{$\MW_0(\pi)$ acts by Eichler transformations.}
We first show that $\MW_0(\pi)$ acts trivially on the grading of the Leray filtration. For $q\ge 0$, the stalk of the sheaf $R^q\pi_*\Z$ at $b\in B$ is $H^q(M_b)$. 
Since $\MW_0(\pi)$ acts on $M$ by fiberwise translation, it acts trivially on $H^0(M_b)$.
If $M_b$ is smooth, a translation acts trivially on $H^1(M_b)$. If $M_b$ is multiplicative, an element of $\MW_0(\pi)$ acts isotopically trivial on $M_b$,
so it fixes the orientation class and therefore it acts trivially on $H^1(M_b)$. If $M_b$ is additive, $H^1(M_b)=0$, so the action is clearly trivial.
By definition, the action of $\MW_0(\pi)$ fixes the irreducible components of all fibers, so it acts trivially on $H^2(M_b)$. For $q>2$, $H^q(M_b)=0$.
The trivial action on the stalks implies that $\MW_0(\pi)$ acts trivially on each sheaf $R^q\pi_*\Z$. Therefore, $\MW_0(\pi)$ acts trivially on $E_2^{p,q}=H^p(B,R^q\pi_*\Z)$. It follows that it acts trivially on $E_{\infty}^{p,q}$, which by definition is the grading of $H^2(M)$ with respect to the Leray filtration.

We apply Lemma \ref{lem: eichler transformation} to the Leray filtration on $H^2(M)$, where $e=[F]^\vee$, $a_{i,j}=[C_{i,j}]^\vee$ and $\tau\in \MW_0(\pi)$. The existence of a section $\sigma_0$ implies that $e$ is primitive, since $[F]^\vee\cdot [\sigma_0]^\vee=1$. Furthermore, all fibers of $\pi$ represent the same homology class, so $e$ is isotropic.
The Gram matrix associated to $[C_{i,j}]^\vee$ is block diagonal with blocks the negative Cartan matrices associated to the singular fibers, so each block is nondegenerate, implying that the entire matrix is nondegenerate. By definition of $\MW_0(\pi)$, $\tau([C_{i,j}]^\vee)=[C_{i,j}]^\vee$. Hence, there exists a unique $\bar c_\tau\in L_1H^2/\Z[F]^\vee$, such that $\tau$ acts on $H^2(M)$ by the Eichler transformation $E_{[F]^\vee,c_\tau}$. The isomorphism in Step \ref{step: interpret Leray} is given as the composition
\[
    \MW_0(\pi)\xrightarrow[]{\delta} H^1(B,R^1\pi_*\Z) \xrightarrow[]{\cong} L_1H^2/L_0H^2.
\]
By writing out the definition of $\delta$ as the boundary map and of the isomorphism from the $E^{1,1}_\infty$ term of the Leray spectral sequence to the corresponding graded piece of the Leray filtration,
one can check that this composition takes an element $\tau\in \MW_0(\pi)$ to $[\PD(\tau[\sigma])-\PD([\sigma])]\in L_1H^2/L_0H^2$.
Since for any section $\sigma\colon B\to M$,
$\tau([\sigma]^\vee)\equiv [\sigma]^\vee + c_\tau$ modulo $\Z[F]^\vee$, the map $\tau\mapsto c_\tau$ is the group isomorphism from Step \ref{step: interpret Leray}. 
\end{proof}

\begin{remark}
    In the case of a generic elliptic fibration $\pi\colon M\to B$, Farb and Looijenga proved in \cite{FarbLoojenga_MordellWeil} that a section of $\pi$ always exists by showing that $H^2(B,R^1\pi_*\Z)=0$. This is not true if arbitrary fibers are allowed. In fact, the assumption that a section exists is crucial for our proof.
\end{remark}

\begin{remark}
    Over $\Q$ the spectral sequence in the proof of \Cref{prop: narrow MW as annihilator} degenerates on the $E_2$ page, since the edge map $\mathrm{edge}_2$ in the proof of \Cref{prop: narrow MW as annihilator} has full rank.
    Indeed, restricting the domain to the lattice generated by $[F]^\vee,[\sigma_0]^\vee$ and $[C_{i,j}]^\vee$ for $j>0$, $\mathrm{edge}_2$ can be computed explicitly as a matrix.
    More generally, Deligne showed that for a family of smooth projective varieties, the Leray-Serre spectral sequence over $\Q$ degenerates on $E_2$.
    However, working over $\Z$, one needs to consider the $E_3$ page.
\end{remark}

\subsection{The Narrow Mordell-Weil Group in Terms of the Trivial Lattice}
\label{subsection: proof main thm}

We use \Cref{prop: narrow MW as annihilator} to prove our main theorem.
\begin{proof}[Proof of \Cref{thm: MW0 as perp of triv}]
    By Poincaré duality, $\alpha\in H^2(M)$ lies in the annihilator of $a\in H_2(M)$ if and only if $\alpha\smile \PD(a)=0$. Let 
    \[
        L\coloneqq \Z[F]^\vee\oplus \bigoplus_{i=1}^n\bigoplus_{j=1}^{r_i}\Z[C_{i,j}]^\vee.
    \]
    For each $1\le i \le n$, 
    \[
        [C_{i,0}]^\vee=[F]^\vee-\sum_{j=1}^{r_i} m_{i,j}[C_{i,j}]^\vee,
    \]
    so $[C_{i,0}]^\vee\in L$. Thus, $\triv(\pi)=L\oplus \Z[\sigma_0]^\vee$. The sum is direct since $\forall x\in L, x\cdot [F]^\vee=0$, but $[\sigma_0]^\vee\cdot [F]^\vee=1$.
    \Cref{prop: narrow MW as annihilator} implies
    \[
        \MW_0(\pi)\cong L^\perp /\Z [F]^\vee,
    \]
    where $\perp$ is taken in $H^2(M;\Z)$ with respect to the intersection form.
    Consider the projection
    \[
    \begin{array}{ccc}
         \Phi\colon L^\perp &\to& \triv(\pi)^\perp,\\
         x&\mapsto &x- (x \cdot [\sigma_0]^\vee) [F]^\vee.  
    \end{array}
    \]
    $\Phi$ is well-defined, since $[F]^\vee \cdot [\sigma_0]^\vee=1$, so $\Phi(x)\cdot [\sigma_0]^\vee=0$ and $x\in L^\perp$ implies $\Phi(x)\in L^\perp$.
    It preserves the intersection form, since elements in the domain satisfy $x\cdot [F]^\vee=0$ and $[F]^\vee\cdot [F]^\vee=0$.
    Furthermore, $\Phi$ is surjective since $\triv(\pi)^\perp \subset L^\perp$ and $\Phi|_{\triv(\pi)^\perp}=\id$. Finally, $\Phi$ has kernel $\Z [F]^\vee$.
    Therefore, $\Phi$ induces the desired isomorphism of lattices.
\end{proof}

\begin{remark} \label{rmk: unimodularity}
    The group $\MW_0(\pi)$ is not unimodular in general. 
    Let $T\coloneqq \triv(\pi)$ and denote by $\Bar{T}\coloneqq (T\otimes \Q)\cap H^2(M)$ its primitive closure.
    By Poincaré duality, $H^2(M)$ is unimodular, so 
    \[
        |\disc(\MW_0(\pi))|=|\disc(T^\perp)|=|\disc(\Bar{T})|=\frac{|\disc(T)|}{[\Bar{T}:T]^2}.
    \]
    
    Using the notation from Proposition \ref{prop: narrow MW as annihilator}, $\triv(\pi)$ is isomorphic to the orthogonal direct sum of the lattices $\Z[F]^\vee\oplus\Z[\sigma_0]^\vee$ and $\bigoplus_{j>0} \Z[C_{i,j}]^\vee$. The Gram matrix of $\Z[F]^\vee\oplus\Z[\sigma_0]^\vee$ is $\begin{pmatrix}
        0 & 1\\
        1 & [\sigma_0]^2
    \end{pmatrix}$, so this lattice has determinant $-1$.
    By the discussion in \Cref{section: Kodaira classification}, the Gram matrix of $\bigoplus_{j>0} \Z[C_{i,j}]^\vee$ is the negative Cartan matrix $A_{t_i}$ of the Dynkin diagram associated to the singular fiber $M_{t_i}$. 

    Since $T\otimes \Q\subset H^{1,1}(M)$, the primitive closure of $T$ in $H^2(M)$ is equal to the primitive closure of $T$ in $\NS(M)$. Hence,
    \cite[Corollary 6.29]{Schutt_Schioda_MordellWeil} implies $[\Bar{T}:T]= |\MWH(\pi)_{\text{tors}}|$. Thus,
    \[
        |\disc(\MW_0(\pi))|=\frac{\prod |\det(A_{t_i})|}{|\MWH(\pi)_{\text{tors}}|^2}.
    \]
    Hence, whether $\MW_0(\pi)$ is unimodular depends on the types of singular fibers and the size of the torsion subgroup of $\MWH(\pi)$.
\end{remark}

\subsection{The Rank of the Narrow Mordell-Weil Group}
\label{subsection: rank computation of MW0}
We use \Cref{thm: MW0 as perp of triv} to compute the rank of $\MW_0(\pi)$. Recall that $\pi\colon M\to \Sigma_g$ is an elliptic fibration and $\Sigma_g$ is a projective curve of genus $g$. Hence, $M$ is a projective complex surface.
We assume that $\pi$ has $a\ge 0$ additive fibers and $m\ge 0$ multiplicative fibers.
\begin{proof}[Proof of \Cref{cor: rk MW_0 thm}]
    By \Cref{thm: MW0 as perp of triv},
    \begin{equation}\label{eq: rk MW0 H^2 - triv}
        \rk(\MW_0(\pi))=\rk(H^2(M))-\rk(\triv(\pi)).
    \end{equation}

    By \Cref{prop: fund group}, $\pi_1(M)\cong\pi_1(\Sigma_g)$. This implies $H_1(M)\cong \Z^{2g}$. By the universal coefficient theorem for cohomology, $H^1(M)\cong \Z^{2g}$ and by Poincaré duality, $H^3(M)\cong \Z^{2g}$. Thus, 
    \begin{equation*} 
    \begin{aligned}
        \rk(H^2(M))&=\chi(M)+\rk(H^1(M))+\rk(H^3(M))-\rk(H^0(M))-\rk(H^4(M)) =\\
        &= \chi(M)+4g-2= \sum_{i=1}^n \chi (M_{t_i})+4g-2,
    \end{aligned}
    \end{equation*}
    where the last equality follows from \Cref{lem: euler char is sum of euler of fibers}.
    By \Cref{lem: euler char of sing fibers}, $\chi(M_{t_i})=r_i+1$ if $M_{t_i}$ is multiplicative and 
    $\chi(M_{t_i})=r_i+2$ if $M_{t_i}$ is additive. This, together with $n=a+m$, implies

    \begin{equation*}
        \rk(H^2(M))=\sum_{i=1}^n (r_i+1)+a+4g-2=
        \sum_{i=1}^n r_i+2a+m+4g-2.
    \end{equation*}
    The generators $[\sigma_0]^\vee$, $[F]^\vee$ and $[C_{i,j}]^\vee$ for $j>0$ of $\triv(\pi)$ are linearly independent by the Gram matrix argument in \Cref{rmk: unimodularity}.
    Thus,
    \[
        \rk(\triv(\pi))= \sum_{i=1}^n r_i+2.
    \]
    Therefore, \eqref{eq: rk MW0 H^2 - triv} implies $\rk(\MW_0(\pi))=2a+m+4g-4$.
    Since $\triv(\pi)^\perp$ is a subgroup of the free abelian group $H^2(M)$, it is free abelian, so $\MW_0(\pi)$ is free abelian.
\end{proof}

\section{Applications}
We apply our theorems to compare the smooth and holomorphic Mordell-Weil groups. We then compute the narrow Mordell-Weil group in some concrete examples.

\subsection{Comparing Smooth and Holomorphic Versions}
\label{subsection: comparing smooth and hol}
We use the formula for the rank of $\MW(\pi)$ to relate it to the rank of $\MWH(\pi)$.
\begin{proof}[Proof of Theorem \ref{thm: intro smooth vs hol}]
    We use the Shioda-Tate formula to find an upper bound for the rank of $\MWH(\pi)$ and relate it to the rank of $\MW(\pi)$ determined in \Cref{cor: rk MW}.

    Recall that $R\subset \Sigma_g$ is the set of critical values of $\pi$.
    For each $b\in R$, denote by $\mathrm{nr}(b)$ the number of irreducible components in $\pi^{-1}(b)$. 
    Let $d$ denote the arithmetic genus of $M$ and let $h^{i,j}\coloneqq \dim_\C H^{i,j}(M)$ be the Hodge numbers of $M$.
    Recall that the Néron-Severi lattice $\NS(M)$ is a subset of $H^{1,1}(M)$. Hence, ${\rk(\NS(M))\le h^{1,1}}$.

    We compute the Hodge number $h^{1,1}$.
    By \Cref{prop: fund group}, $\pi_1(M)\cong \pi_1(\Sigma_g)$, the Hurewicz theorem, and the universal coefficient theorem, $H^1(M;\C)\cong \C^{2g}$. The Hodge theorem implies
    \[
        h^{1,0}+h^{0,1}=\dim_\C(H^1(M;\C))=2g.
    \]
    Complex conjugation provides an isomorphism $H^{1,0}(M)\to H^{0,1}(M)$, so $h^{1,0}=h^{0,1}=g$. 
    Since $h^{0,0}=1$ and $d=\chi_\text{hol}(M)=h^{0,0}-h^{1,0}+h^{2,0}$.
    It follows that $h^{2,0}=d+g-1$. Since $h^{2,0}=h^{0,2}$, the Hodge theorem implies that 
    \[
        2(d+g-1)+h^{1,1}=h^{2,0}+h^{0,2}+h^{1,1}=\dim_\C(H^2(M;\C)).
    \]
    Furthermore,
    \[
    \begin{aligned}
        12d=\chi(M)=\sum_{i=0}^4 (-1)^i \dim_\C(H^i(M;\C)) 
        = 1-2g+\dim_\C(H^2(M;\C))-2g+1.
    \end{aligned}
    \]
    Hence, $h^{1,1}=10d+2g$, which implies 
    \begin{equation}\label{eq: bound on NS}
        \rk(\mathrm{NS}(M))\le 10d+2g.
    \end{equation}

    The Euler characteristic of $M$ is $12d$. Since $M$ is a fibration over $B$ and the Euler characteristic of a torus is zero, the Euler characteristic of $M$ is also the sum of the Euler characteristics of the singular fibers.
    \Cref{lem: euler char is sum of euler of fibers} and \Cref{lem: euler char of sing fibers} imply
    \[
        12d=\chi(M)=\sum_{x\in R}\chi(\pi^{-1}(x))=
        \sum_{x\in R} \mathrm{nr}(x)+a.
    \]
    The Shioda-Tate formula (\Cref{cor: Shioda Tate Formula}) and \eqref{eq: bound on NS} imply
    \[
    \rk(\MW_\text{hol}(\pi))\le 10d+2g-2-\sum_{x\in R} \mathrm{nr}(x) + |R| = 2g+2a+m-2-2d.
    \]
    By \Cref{cor: rk MW}, $\rk(\MW(\pi))=4g+2a+m-4$, which yields
    \[
        \rk(\MW(\pi))-\rk(\MW_\text{hol}(\pi))\ge 2g+2d-2.
    \]    
\end{proof}

\begin{proof}[Proof of \Cref{cor: rank MW strictly bigger than MWH unless rational}]
    Since $d\ge1$ and $g\ge 0$, the inequality is immediate from  \Cref{thm: intro smooth vs hol}. Equality is attained if and only if $g=0$ and $d=1$, which implies that $M$ is a rational elliptic surface. Conversely, if $\pi\colon M\to B$ is a rational elliptic surface, $g=0$ and $\rk(\MW(\pi))=2a+m-4$. Rationality of $M$ implies $h^{2,0}=0$, so $\rk(\NS(M))=\dim_\C H^{1,1}(M)=\rk (H^2(M))=10$. By \Cref{cor: Shioda Tate Formula},
    \[
    \begin{aligned}
        \rk(\MWH(\pi))&=\rk(\NS(M))-2-\sum_{b\in R}(\mathrm{nr}(b)-1) =\\
        &=8-\sum_{b\in R} \chi(M_b)+2a+m=8-12+2a+m=\rk(\MW(\pi)).
    \end{aligned}
    \]
\end{proof}

\subsection{Examples}
\label{subsection: examples}
Our theorems imply that the rank of the (narrow) Mordell-Weil group of an elliptic surface is easy to compute once the types of singular fibers of an elliptic surface are known. 
We use this to give examples of different elliptic surfaces with isomorphic narrow Mordell-Weil groups.

\begin{ex}
    The elliptic fibration $\pi\colon M\to \p^1$ obtained as the minimal compactification of $y^2=x^3+t^5+1$ has $6$ cuspidal fibers (see \cite[Example 5.8]{Schutt_Schioda_MordellWeil}). Thus $2a+m-4=8$, yielding $\MW_0(\pi)\cong \Z^8$. Thus, $\pi$
    has the same narrow Mordell-Weil group as the generic rational elliptic surface, which has $12$ nodal fibers.
\end{ex}

\begin{ex} [Elliptic fibrations with $\MW_0(\pi)\cong \Z^4$] 
Consider an elliptic fibration $\pi\colon M\to \p^1$.
By \Cref{cor: rk MW_0 thm}, the following configurations of $m$ multiplicative fibers and $a$ additive fibers yield $\MW_0(\pi)\cong \Z^4$:
\begin{center}
\begin{tabular}{|c|c|c|c|c|c|}
\hline
 \textbf{m} &  8 & 6 & 4 & 2 & 0\\ 
 \hline
 \textbf{a} & 0 & 1 & 2 & 3 & 4 \\  
\hline
\end{tabular}
\end{center}
By referring to Persson's list of configurations of singular fibers on a rational elliptic surface \cite{Persson_Configurations}, we see that these all occur. See \Cref{fig:2I_3} and \Cref{fig:4III} for two examples.

We also give an example of a K3 surface with $\MW_0(\pi)\cong \Z^4$. Let $E_1,E_2$ be elliptic curves and let $\iota\colon E_1\times E_2\to E_1\times E_2$ be the product of their hyperelliptic involutions. Then $M\coloneqq \mathrm{Km}(E_1\times E_2)$ is the minimal resolution of $(E_1\times E_2)/\iota$. Geometrically, this can be thought of as replacing a neighborhood of each of the 16 singular points of $(E_1\times E_2)/\iota$ with spheres of self-intersection $-2$.
The projection $E_1\times E_2\to E_1$ induces a map $M\to \p^1\cong E_1/\{\pm 1\}$, giving $M$ the structure of an elliptic fibration with 4 singular fibers of type $I_0^*$, see \Cref{fig:Kummer}.
\end{ex}

\begin{ex}[\textit{$\rk(\MW_0)$ can distinguish between different elliptic fibrations on the same complex surface}]
For two non-isogenous elliptic curves $E_1,E_2$ with involution $\iota\colon E_1\times E_2\to E_1\times E_2$, the Kummer surface $X$ is the minimal resolution of $(E_1\times E_2)/\iota$. Oguiso showed that (up to equivalence) such a Kummer surface admits $11$ different elliptic fibrations with different configurations of singular fibers (see \cite[Table A]{Oguiso_DiffFibOnK3}). The Kummer surface shown in  \Cref{fig:Kummer} has four singular fibers of additive type and thus $\MW_0(\pi)\cong \Z^4$.
According to Oguiso's table, there also exist fibrations with $10$ multiplicative fibers (e.g. $2$ fibers of type $I_8$ and $8$ fibers of type $I_1$). These have $\MW_0(\pi)\cong \Z^6$.
\end{ex}

\printbibliography

\noindent
\textsc{Department of Mathematics, University of Chicago} \\
\textit{Email:} \href{mailto:mmorariu@uchicago.edu}{mmorariu@uchicago.edu}
\end{document}